% 2011-09-16 by H
% 2011-09-15 by Lam+H
% 2011-09-14 by H
% 2011-09-14 by H
% 2011-09-13 by H
% 2011-09-13 by H
% 2011-09-12 by H
% 2011-09-11 by Lam
% 2011-09-09 by H
% 2011-09-09 by Lam
% 2011-09-08 by H
% 2011-09-04 by Lam
% 2009-08-28 by M.H.
% 2009-08-28 by Lam.
% 2009-08-20 by Lam.
% 2009-08-18(6p.m.) by Lam.
% 2008-08-18 by M.H.
% 2009-08-14 by Lam
% 2008-08-04 by M.H.
% 2008-07-13 by M.H.
% 2008-07-09 by A.M.
% 2008-07-09 by M.H.
% 2008-07-07 by M.H.
% 2008-07-07 by A.M.
% 2008-07-06 by M.H and A.M.
% 2008-04-08 by A.M.
% 2008-04-03 by A.M.
% 2008-03-26 by C.H.L.
% 2008-03-06 by M.H.
% 2008-02-17 by M.H.
% 2008-02-11 by A.M.
% 2008-02-10 by A.M.
% 2008-01-30 by A.M.
% 2008-01-29 by M.H.
% 2008-01-12 by A.M.
% 2008-01-12 by M.H.5
% 2008-01-10 by A.M.
% 2008-01-08 by A.M.

\documentclass[A4,12pt]{article}
\usepackage{amsfonts,amsthm,amsmath,amssymb,euscript}
\usepackage{hyperref}
%%\usepackage[notref]{showkeys}
%%%%%%%%%%%
%%%%%%%%%%%% Lam's command %%%%%%%%%
\newcommand{\sfr}[2]{\leavevmode\kern-.1em
  \raise.5ex\hbox{\the\scriptfont0 #1}\kern-.1em
  /\kern-.15em\lower.25ex\hbox{\the\scriptfont0 #2}}
%%

%ams

\newcommand{\la}{\langle}
\newcommand{\ra}{\rangle}

\newcommand{\aut}{\mathrm{Aut}}
\newcommand{\wt}{\mathrm{wt}}

% VOA

%\newcommand{\vacuum}{\mathrm{1\hspace{-3.2pt}l}}

\newcommand{\supp}{\mathrm{supp}}

\newcommand{\M}{\mathbb{M}}
%%%%%%%%%%%
\newcommand{\Z}{\mathbb{Z}}

%%%%%%%%%%%%%Greek
\newcommand{\al}{\alpha}
\newcommand{\be}{\beta}
\newcommand{\La}{\Lambda}
%%%%%%%%%%%%%%%%%%%%%%%%%%%%

\newtheorem{thm}{Theorem}[section]
\newtheorem{prop}[thm]{Proposition}
\newtheorem{lem}[thm]{Lemma}
\newtheorem{cor}[thm]{Corollary}
\theoremstyle{remark}
\newtheorem{rem}[thm]{Remark}
\theoremstyle{definition}

\newtheorem{df}[thm]{Definition}

%%%%%%%%%%%%

\newcommand{\ZZ}{\mathbb{Z}}

\newcommand{\RR}{\mathbb{R}}

\newcommand{\cC}{\mathcal{C}}
\newcommand{\EuC}{\mathcal{C}}
\newcommand{\EuD}{\EuScript{D}}

\newcommand{\nexteq}{\displaybreak[0]\\ &=}
\newcommand{\nnexteq}{\nonumber\displaybreak[0]\\ &=}
%%%%%%%%%%%

\newcommand{\allone}{\mathbf{1}}
\newcommand{\allzero}{\mathbf{0}}
%%%%%%%%%%%
\DeclareMathOperator{\Aut}{Aut}
%\DeclareMathOperator{\wt}{wt}

%%%%%%%%%%%%%%%%%%%%%%%%%%%%
\setlength{\topmargin}{0cm} %%
\setlength{\oddsidemargin}{0cm}%%
\setlength{\evensidemargin}{0cm} %%
\setlength{\textheight}{22cm}%%
\setlength{\textwidth}{16cm}%%

%%%%%%%%%%%%%%%%%%%%%%%%%%%

%%%%%%%%%%%%%%%%%%%%%%%%%%

\title{Residue codes of extremal Type~II $\Z_4$-codes and
the moonshine vertex operator algebra}
\author{
Masaaki Harada\thanks{
Department of Mathematical Sciences,
Yamagata University,
Yamagata 990--8560, Japan, and
PRESTO, Japan Science and Technology Agency, Kawaguchi,
Saitama 332--0012, Japan.
email: mharada@sci.kj.yamagata-u.ac.jp,
telephone number: +81-236-28-4533,
fax number: +81-236-28-4538},
Ching Hung Lam\thanks{
Institute of Mathematics,
Academia Sinica, Taipei 106-17, Taiwan.
email: chlam@math.sinica.edu.tw}
and Akihiro Munemasa\thanks{
Graduate School of Information Sciences,
Tohoku University,
Sendai 980--8579, Japan.
email: munemasa@math.is.tohoku.ac.jp}
}

\begin{document}

 \maketitle

\begin{abstract}
In this paper, we study the residue codes of extremal Type~II $\Z_4$-codes of
length  $24$ and their relations to the famous
moonshine vertex operator algebra. The main result
is a complete classification of all residue codes of extremal Type~II $\Z_4$-codes
of length  $24$.  Some corresponding results associated to the moonshine
vertex operator algebra are also discussed.
\end{abstract}

\noindent
{\bf Keywords:}
moonshine vertex operator algebra,
framed vertex operator algebra,
binary triply even code,
binary doubly even code,
extremal Type~II $\ZZ_4$-code.

\bigskip
\noindent
{\bf AMS Subject Classification:}
17B69, 94B05, 20E32

% \tableofcontents

\section{Introduction}

In this paper, we study the residue codes of extremal Type~II $\Z_4$-codes of
length  $24$ and their relationship to the famous moonshine
vertex operator algebra. The main
result is a complete classification of all residue codes of extremal
Type~II $\Z_4$-codes of length  $24$.
Some corresponding results about the structure codes of the moonshine
vertex operator algebra are also discussed.
Since the residue code of an extremal Type~II $\Z_4$-code
of length $24$ is contained in some binary doubly even self-dual codes
and binary doubly even
self-dual codes of length $24$ are classified in~\cite{PS75},
we can list all binary doubly
even codes $B$ satisfying the condition that its dual code
$B^\perp$ is even and $B^\perp$
has minimum weight $\geq 4$.
%% containing the all-one vector with dual codes of minimum weight $\geq 4$.
It turns out that there are 179 such codes up to equivalence
(Table~\ref{Tab:24}). Then, by using the algorithm given in~\cite{Rains}, we
determine all binary doubly even codes that can be realized as the residue codes of
some extremal Type~II $\Z_4$-codes. We also prove that if $B'\supset B$ is a
weight $4$ augmentation of $B$ (see Definition~\ref{df:wt4})
and $B$ is realized as the residue code of an extremal Type~II $\Z_4$-code, then
$B'$ is also realized (Lemma~\ref{augmentation}). Not only does this result
reduce the amount of computation, but it also helps us to  express the main
result in a nicer form (Theorem~\ref{finalresult}).

We also study  the relationship between the structure codes of the moonshine
vertex operator algebra and extremal Type~II $\Z_4$-codes of length $24$.
We call a triply even code of length $48$ a moonshine code if it is
a $\frac{1}{16}$-code of the moonshine vertex operator algebra.
The extended doubling (see Definition~\ref{Edouble})
of a binary doubly even code $B$ of length $24$ is a
triply even code of length $48$, and we show that such a code
is a moonshine code if and only if $B$
is the residue code of some extremal Type~II
$\Z_4$-code (Theorem~\ref{thm:z4-24}).
Together with our main result, this means that we know all the moonshine
codes which are extended doublings.
% if and only if its extended doubling (see Section 4  for definition) is a
%$\frac{1}{16}$ code of the Moonshine VOA.
%%
%%
%%%%%%%%%%%

The organization of the paper is as follows.
In Section~\ref{Sec:Pre}, definitions and some basic results
of codes, which are used in this paper, are given.
In Section~\ref{Sec:Enum}, we classify
residue codes of extremal Type~II $\ZZ_4$-codes of length $24$. We also show
that a binary
doubly even code is the residue code of an extremal Type~II $\ZZ_4$-code
of length $24$ if and only if it can
%all extended doublings which are moonshine codes. We show that an
%extended doubling $\EuD(B)$ is a moonshine code if and only if the code $B$ can
be obtained by successive application of weight $4$ augmentation to one of the
codes listed in Table~\ref{Tab:24-rc}.
%%%%%
In Section~\ref{sec:4},
we study the structure codes of the moonshine vertex operator algebra,
which we call moonshine codes. In particular,  we show that a binary
triply even code is a moonshine code  if and only if it can be obtained
by successive application of weight $8$ augmentation to a moonshine code
of minimum weight $16$. As a consequence, we also show that
the direct sum of the extended doublings of its components  are  moonshine
codes.% in Section~\ref{decCode}.
%%%%%
%In Appendix~\ref{Ap:MD}, we show that there exist exactly 3 extended
%%% doublings which are minimal subject to conditions (\ref{eq:c1})--(\ref{eq:c3}).
%doublings which are minimal among triply even codes $D$ of length $48$
%such that $D^\perp$ are even and have minimum weight $\geq 4$.
%% containing the all-one vector with
%% dual codes of minimum weight $\geq 4$.
%Finally, in Appendix~\ref{Ap:Z4-24}, we give explicitly generator matrices
%of several extremal Type~II $\ZZ_4$-codes,
%% ${\cC}$ with $C=\cC_1$ for each $C$ of the
%% seven codes $C=d_{12}^2$, $d_{10}e_7^2$, $d_8^3$, $d_6^4$, $d_4^6$, $e_8^3$ and
%% $d_{16}e_8$.
%in order to demonstrate that every binary doubly even self-dual code
%of length $24$ is the residue code of some extremal Type~II $\Z_4$-code.

\section{Binary codes and $\ZZ_4$-codes}
\label{Sec:Pre}

In this paper, we deal with binary codes and $\ZZ_4$-codes, and codes mean
binary codes unless otherwise specified. Let $C$ be a code of length $n$. The
{\em weight} $\wt(x)$ of a codeword $x \in C$ is the number of non-zero
coordinates. A code $C$ is called {\em even, doubly even} and {\em triply even}
if the weights of all codewords of $C$ are divisible by 2, 4 and 8,
respectively. The dual code $C^\perp$ of $C$ is defined as $\{ x \in \ZZ_{2}^n
\mid  \langle x, y\rangle = 0$ for all $y \in C\}$, where $\langle x, y\rangle$
denotes the standard inner product. A code $C$ is {\em self-orthogonal} if $C
\subset C^\perp$, and $C$ is {\em self-dual} if $C=C^\perp$. Two codes are {\em
equivalent} if one can be obtained from the other by a permutation of
coordinates. Throughout this paper, we denote the all-one vector by
$\mathbf{1}$ and the zero vector by $\mathbf{0}$.
%, and
%we denote by $x\cdot y$ the coordinatewise product $(x_1\cdot
%y_1,\dots, x_n\cdot y_n)$ of $x=(x_1,\ldots,x_n)$ and $y=(y_1,\dots, y_n) \in
%\Z_2^n$.
For a code $C$ of length $n$ and a vector $\delta\in\ZZ_2^n$, we
denote by $\langle C, \delta \rangle_{\ZZ_2}$ the code generated by the
codewords of $C$ and $\delta$.

For a $\Z_4$-code $\cC$ of length $n$, define two codes:
\[
\EuC_0=\{ \al\bmod{2}\mid \al\in\ZZ_4^n,\;
2\alpha\in \EuC\} \quad \text{ and }\quad
\EuC_1=\{ \al\bmod{2}\mid \al\in \EuC\}.
\]
These codes  $\EuC_0$ and $\EuC_1$ are called
{\em torsion} and {\em residue} codes, respectively.
It holds that $\EuC_1 \subset \EuC_0$.
For a $\ZZ_4$-code $\cC$, the dual code $\cC^\perp$ is defined
similarly to binary codes.
Then self-orthogonal codes and self-dual codes are also defined similarly.
If $\EuC$ is self-dual, then
$\EuC_1$ is doubly even and $ \EuC_0= \EuC_1^\perp$.
The {\em Euclidean weight} of a codeword $x=(x_1,\ldots,x_n)$ of $\cC$ is
$n_1(x)+4n_2(x)+n_3(x)$, where $n_{\alpha}(x)$ denotes
the number of components $i$ with $x_i=\alpha$ $(\alpha=1,2,3)$.
%% The {\em minimum Euclidean weight} $d_E$ of $\cC$ is the smallest Euclidean
%% weight among all nonzero codewords of $\cC$.
%% The Euclidean weight of a codeword $x=(x_1,\ldots,x_n)$ of $\cC$  is
%% $\sum_{i=1}^n \min\{x_i^2,(4-x_i)^2\}$.
%% A self-dual code $\cC$ is {\em Type~II}
%% if the Euclidean weights of all codewords of $\cC$ are divisible by 8.
A $\ZZ_4$-code $\cC$ is {\em Type~II}
if $\cC$ is self-dual and
the Euclidean weights of all codewords of $\cC$ are divisible by 8.
%%%%%
The {\em minimum Euclidean weight} $d_E$ of $\cC$ is the smallest Euclidean
weight among all nonzero codewords of $\cC$.
A Type~II $\ZZ_4$-code of length $n$ and
$d_E=8 \lfloor n/24 \rfloor +8$
is called {\em extremal}.
%%%%%%
Two $\ZZ_4$-codes are {\em equivalent} if one can be obtained from the
other by permuting the coordinates and (if necessary) changing
the signs of certain coordinates.

%%\begin{df}
Let $\EuC$ be a self-orthogonal $\ZZ_4$-code of length $n$.  Define
\[
A_4(\EuC)=\frac{1}2\left\{ (x_1,\dots,x_n)\in \mathbb{Z}^n \mid
(x_1 \bmod 4,\dots,x_n \bmod4) \in \EuC \right\}.
\]
It is well-known that
$A_4(\EuC)$ is even unimodular if and only if $\EuC$ is Type~II.
%%\end{df}
%%
The following result is also well-known (cf.~\cite{BSBM}).
\begin{lem}\label{lem:A4}
Let $\cC$ be a Type~II $\Z_4$-code of length $n$. 
Then, $\cC$ has
minimum Euclidean weight at least $16$ if and
only if $A_4(\cC)$ has minimum norm $4$. 
In particular, for $n=24$, 
$\cC$ is extremal if and
only if $A_4(\cC)$ is isomorphic to the Leech lattice $\Lambda$.
% Let $\cC$ be a $\Z_4$-code of length $24$. Then, $\cC$ is extremal Type~II if and
% only if $A_4(\cC)$ is isomorphic to the Leech lattice $\Lambda$.
%% Let $\cC$ be an extremal Type II $\Z_4$-code of length $24$. Then
%% $A_4(\cC)$ is isomorphic to the Leech lattice $\Lambda$.
%% \[
%% A_4(\EuC)=\frac{1}2\left\{ (x_1,\dots,x_n)\in \mathbb{Z}^n|\,
%% (x_1,\dots,x_n)\in \EuC \mod 4\right\}
%% \]
\end{lem}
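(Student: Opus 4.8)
The plan is to leverage the standard correspondence between self-dual $\ZZ_4$-codes and unimodular lattices via the construction $A_4$, reducing the Euclidean-weight condition on $\cC$ to a minimum-norm condition on $A_4(\cC)$, and then invoke the uniqueness of the Leech lattice among even unimodular lattices of rank $24$ with no roots. Recall that for $\al=(x_1,\dots,x_n)\in\ZZ^n$ with reduction in $\cC$, the vector $\frac12\al\in A_4(\cC)$ has squared norm $\frac14\sum x_i^2$, and by choosing each $x_i$ in $\{0,\pm1,2\}$ to be a representative of minimal absolute value we may assume $x_i^2$ equals $1$ when the corresponding coordinate of the codeword is $1$ or $3$, and equals $0$ or $4$ when it is $0$ or $2$. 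Thus $\frac14\sum x_i^2 = \frac14\bigl(n_1(x)+n_3(x)+4n_2(x)\bigr) = \frac14\cdot(\text{Euclidean weight of }x)$ for the minimal representative, and general representatives only have larger norm. This shows the minimum norm of $A_4(\cC)$ is exactly $\frac14 d_E(\cC)$; since $A_4(\cC)$ is even (as $\cC$ is Type~II), norms are even integers, so minimum norm $\ge 4$ is equivalent to $d_E(\cC)\ge 16$. That proves the first assertion.

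For the second assertion, specialize to $n=24$. If $\cC$ is extremal, then $d_E(\cC)=8\lfloor 24/24\rfloor+8=16$, so by the first part $A_4(\cC)$ has minimum norm $4$; since $\cC$ is Type~II, $A_4(\cC)$ is even unimodular of rank $24$. An even unimodular lattice of rank $24$ with minimum norm $\ge 4$ has no vectors of norm $2$, hence no roots, and by Conway's characterization (or Niemeier's classification) the Leech lattice $\Lambda$ is the unique such lattice up to isomorphism; therefore $A_4(\cC)\cong\Lambda$. Conversely, if $A_4(\cC)\cong\Lambda$ then its minimum norm is $4$, so $d_E(\cC)\ge 16$ by the first part, and since $d_E(\cC)$ is a multiple of $8$ bounded above by $16$ for a Type~II code of length $24$ (the extremality bound $8\lfloor n/24\rfloor+8$ is an upper bound, as is classical), we get $d_E(\cC)=16$, i.e.\ $\cC$ is extremal.

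The only genuinely nontrivial input is the uniqueness of the Leech lattice among even unimodular rank-$24$ lattices without roots, which is a well-known classical result and is exactly the content being quoted when one writes ``cf.~\cite{BSBM}''; everything else is the bookkeeping in the first paragraph together with the arithmetic $8\lfloor 24/24\rfloor+8=16$. I would expect no obstacle beyond being careful that a representative $\al$ of a codeword can always be chosen coordinatewise of minimal norm so that the norm of $\frac12\al$ computes the Euclidean weight, and that larger representatives do not produce shorter lattice vectors — a one-line check since replacing a coordinate $1$ by $-3$ or $2$ by $-2$ (the only moves) never decreases $x_i^2$.
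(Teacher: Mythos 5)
The paper offers no proof for this lemma; it simply records it as well-known and cites \cite{BSBM}, so there is nothing internal to compare against. Your argument is the standard one and is essentially correct: it correctly reduces the minimum-norm condition on $A_4(\cC)$ to the minimum Euclidean weight of $\cC$ via minimal coset representatives, and then invokes the characterization of the Leech lattice as the unique even unimodular rank-$24$ lattice with no vectors of norm $2$.

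One small imprecision worth fixing: the assertion that ``the minimum norm of $A_4(\cC)$ is exactly $\frac14 d_E(\cC)$'' is not quite right as stated. Lattice vectors $\frac12\alpha$ with $\alpha$ reducing to the \emph{zero} codeword form the sublattice $(2\ZZ)^n$, which always contributes vectors of norm $4$ (e.g.\ $2e_i$). Hence the minimum norm of $A_4(\cC)$ is $\min\bigl\{4,\tfrac14 d_E(\cC)\bigr\}$, not $\tfrac14 d_E(\cC)$; your formula would be wrong for a Type~II code of length $\ge 48$ with $d_E>16$. This also explains why the lemma can say ``minimum norm $4$'' rather than ``at least $4$'': the frame forces the minimum norm to be at most $4$ in any case. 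The corrected formula still gives the equivalence $d_E(\cC)\ge 16 \iff \min\text{-norm}\ge 4 \iff \min\text{-norm}=4$, so the rest of your argument, including the reduction of the $n=24$ case to uniqueness of the Leech lattice and the use of the extremality bound $d_E\le 8\lfloor n/24\rfloor+8=16$, goes through unchanged.
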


%Let $C$ be a doubly even code of length $24$.
%It follows from Theorem~\ref{thm:z4-24} that if
%the doubling $\EuScript{D}(C)$ is a moonshine code,
%then $C$ is the residue code of an
%extremal Type~II $\Z_4$-code.
%% Such a code $C$ satisfies the following conditions:

\begin{lem}
Let $C$ be the residue code of an
extremal Type~II $\Z_4$-code of length $24$.
Then $C$ satisfies
the following conditions:
\begin{align}
&C\text{ is doubly even;}\label{eq:b1}\\
&C\ni\allone;\label{eq:b2}\\
&C^\perp\text{ has minimum weight at least $4$}.\label{eq:b3}
\end{align}
%Now we give a classification of codes of length $24$
%satisfying (\ref{eq:b1})--(\ref{eq:b3}).
In particular, $\dim C \ge 6$.
\end{lem}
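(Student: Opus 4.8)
The plan is to start from an extremal Type~II $\Z_4$-code $\cC$ of length $24$ and analyze what the defining properties of $\cC$ impose on its residue code $C=\cC_1$. The general structure theory already recalled in the excerpt gives conditions \eqref{eq:b1} and part of the setup essentially for free: since $\cC$ is self-dual, $C=\cC_1$ is doubly even and $\cC_0=\cC_1^\perp=C^\perp$, with $C\subset C^\perp$. So \eqref{eq:b1} is immediate, and the work is concentrated in \eqref{eq:b2} and \eqref{eq:b3}.

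For \eqref{eq:b2}, the key point is that a Type~II $\Z_4$-code always contains a codeword all of whose coordinates are units. Indeed, the Euclidean weight of the all-two vector $2\cdot\allone$ is $4\cdot 24=96$, which is divisible by $8$, consistent with Type~II, but more to the point one uses that $A_4(\cC)$ is even unimodular (equivalently, for $n=24$, by Lemma~\ref{lem:A4}, isomorphic to the Leech lattice $\Lambda$): a unimodular lattice obtained this way must have a vector of odd coordinate-parities, which forces $\cC$ to contain a vector $\alpha$ with all $\alpha_i$ odd, i.e.\ $\alpha\equiv\allone\pmod 2$, hence $\allone\in\cC_1=C$. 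Alternatively, and perhaps more cleanly, one invokes the standard fact that $\cC_1^\perp=\cC_0\supseteq\cC_1=C$ together with the observation that $2\cdot\allone\in\cC$ would contradict extremality unless $\allone$ itself lifts into $\cC$; I would phrase whichever of these is shortest. The upshot is $\allone\in C$.

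For \eqref{eq:b3}, suppose for contradiction that $C^\perp=\cC_0$ contains a nonzero vector $\bar x$ of weight $1$ or $2$. Lift it to $x\in\Z_4^{24}$ with $2x\in\cC$; then $2x$ is a codeword of $\cC$ of Euclidean weight $4\cdot\wt(\bar x)\le 8$. Since $\cC$ is extremal of length $24$, its minimum Euclidean weight is $8\lfloor 24/24\rfloor+8=16$, so the only codeword of Euclidean weight $\le 8$ is $0$, forcing $2x=0$, i.e.\ $\bar x=0$, a contradiction. Hence $C^\perp$ has minimum weight at least $4$. (One should also note $C^\perp$ is even, since $\cC_0=\cC_1^\perp$ and $\cC_1=C$ is doubly even, so $C^\perp=C_1^{\perp\perp}\supseteq\ldots$; actually evenness of $C^\perp$ follows from $C\ni\allone$, which is already established, because $\allone\in C$ is equivalent to every codeword of $C^\perp$ having even weight.)

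Finally, for the dimension bound: from $C\subset C^\perp$ we get $\dim C\le 12$, and from \eqref{eq:b3}, $C^\perp$ has minimum weight $\ge 4$, so $\dim C^\perp=24-\dim C$ is bounded by the best possible dimension of a length-$24$ binary code of minimum distance $4$, which is $18$ (attained by the $[24,18,4]$ code); hence $24-\dim C\le 18$, giving $\dim C\ge 6$. I expect the main obstacle to be \eqref{eq:b2}: establishing $\allone\in C$ requires either quoting the Leech-lattice identification from Lemma~\ref{lem:A4} and extracting a parity statement, or an ad hoc argument with the all-two vector; the other two items are short Euclidean-weight arguments and the dimension count is elementary coding theory. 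I would double-check that the $[24,18,4]$ bound (equivalently, that a binary code of length $24$ and minimum distance $4$ has dimension at most $18$, e.g.\ by the Singleton or sphere-packing or Griesmer bound) is the one actually needed and cite it accordingly.
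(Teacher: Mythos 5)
The paper's own proof of this lemma is essentially a list of citations: conditions \eqref{eq:b1}, \eqref{eq:b2}, \eqref{eq:b3} are attributed to \cite{CS}, \cite{HSG} and \cite{H} respectively, and the bound $\dim C\ge 6$ comes from the fact, cited from Brouwer's tables \cite{Brouwer-Handbook}, that a $[24,k,4]$ binary code has $k\le 18$. Your reconstruction of \eqref{eq:b1} (self-duality $\Rightarrow$ doubly even residue) and of \eqref{eq:b3} (a weight-$1$ or weight-$2$ vector in $\cC_0=C^\perp$ lifts to a codeword $2\alpha\in\cC$ of Euclidean weight $\le 8<16$, contradicting extremality) are both correct and are precisely the standard arguments behind those citations. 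The overall dimension count ($\dim C^\perp\le 18\Rightarrow\dim C\ge 6$) is also the paper's argument.

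The genuine gap is in \eqref{eq:b2}. Both of your suggested routes are incomplete. The claim ``a unimodular lattice obtained this way must have a vector of odd coordinate-parities'' is exactly the content that needs to be proved, and you assert it without justification. The alternative, ``$2\cdot\allone\in\cC$ would contradict extremality unless $\allone$ itself lifts into $\cC$,'' does not work: $2\allone\in\cC$ always holds (since $\allone\in\cC_1^\perp=\cC_0$ because $\cC_1$ is doubly even, hence even), and $2\allone$ has Euclidean weight $96$, well above the extremal minimum $16$, so nothing is contradicted. The fact that a Type~II $\Z_4$-code contains a codeword with all coordinates units is a nontrivial theorem about Type~II codes (it is why the paper cites \cite{HSG} here rather than reproving it), and your proposal does not actually establish it.

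A smaller issue: for $\dim C^\perp\le 18$ you propose checking ``Singleton or sphere-packing or Griesmer,'' but none of these elementary bounds gives $18$ for a $[24,k,4]$ code (Singleton gives $21$, sphere-packing gives $19$, Griesmer gives $20$). The correct reference is Brouwer's tables, as in the paper. (One \emph{can} get $18$ elementarily by first observing $C^\perp$ is even --- which follows from \eqref{eq:b2} --- then puncturing to a $[23,k,3]$ code and applying the Hamming bound, but that is not what you wrote, and in any case it relies on \eqref{eq:b2} being established first.)
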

\begin{proof}
%% Straightforward.
For the proofs of the assertions
(\ref{eq:b1}), (\ref{eq:b2}) and (\ref{eq:b3}),
see~\cite{CS}, \cite{HSG} and \cite{H}, respectively.
%% For the proofs of the assertions (\ref{eq:b1}) and (\ref{eq:b2}),
%% see \cite{CS} and \cite{HSG}, respectively.
%% If $\cC$ is an extremal Type II $\Z_4$-code
%% with $\cC_1=C$, then $\cC_0=C^\perp$, while
%% the minimum Euclidean weight of $\cC$ is at most $4$ times
%% the minimum weight of $\cC_0$.
%% This implies (\ref{eq:b3}).
It is known that a $[24,k,4]$ code exists only if 
$k \le 18$~\cite{Brouwer-Handbook}.
This gives the last assertion.
\end{proof}

\section{Classification of  residue codes of extremal Type~II $\Z_4$-codes}
\label{Sec:Enum}
%\subsection{Weight 4 augmentation and Moonshine doublings}

In this section, we classify all residue codes of extremal Type~II $\Z_4$-codes of
length $24$.

\subsection{Weight 4 augmentation}

\begin{df}\label{df:wt4}
Let $C$ be a subcode of a doubly even code $C'$ and let $k$ be
a positive integer divisible by $4$.
We call $C'$ a {\em weight $k$
augmentation of $C$} if $C'\neq C$, and $C'$ is generated by $C$ and a vector
of weight $k$.
%Let $C$ be a subcode of a doubly even code $C'$ of length $24$.
%We call $C'$ a {\em weight $4$
%augmentation of $C$} if $C'\neq C$, and $C'$ is generated by $C$ and a vector
%of weight $4$.
\end{df}

Recall that two lattices $L$ and $L'$ are neighbors if
both lattices contain a sublattice of index $2$
in common.

\begin{lem}\label{lem:1c}
Let $\La$ be an even unimodular lattice with minimum norm $4$, and suppose
that $\alpha\in\La$ satisfies $\|\alpha\|^2=4$,
where $\|\alpha\|^2=\langle \alpha,\alpha \rangle$.
Define
\begin{equation}\label{eq:11}
\La_\alpha=\{\gamma\in\La\mid\langle\alpha,\gamma\rangle\equiv0\pmod2\}.
\end{equation}
If $\beta\in\La\setminus\La_\alpha$, then
\begin{equation}\label{eq:12}
\La_{\alpha,\beta}'=\La_\alpha\cup(\frac12\alpha+\beta+\La_\alpha)
\end{equation}
is an even unimodular lattice with minimum norm $4$,
which is a neighbor of $\Lambda$ sharing $\Lambda_\alpha$.
\end{lem}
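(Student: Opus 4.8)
The plan is to verify the three claimed properties of $\La_{\alpha,\beta}'$ in order: first that it is an integral lattice, then that it is unimodular and even, and finally that its minimum norm is $4$. Throughout I will use the standard neighbor construction: since $\|\alpha\|^2 = 4$ is even, the map $\gamma \mapsto \langle \alpha, \gamma \rangle \bmod 2$ is a homomorphism $\La \to \ZZ_2$ whose kernel is $\La_\alpha$, and this homomorphism is surjective because $\beta \in \La \setminus \La_\alpha$ exists (indeed $\langle \alpha, \alpha \rangle / 2 = 2$ is even so $\alpha \in \La_\alpha$, but e.g.\ any vector pairing oddly with $\alpha$ lies outside). Hence $[\La : \La_\alpha] = 2$, and $\La_\alpha = \La \cap \La_{\alpha,\beta}'$ is a common index-$2$ sublattice, which will give the neighbor claim once $\La_{\alpha,\beta}'$ is shown to be even unimodular.

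First I would check integrality and that $\La_{\alpha,\beta}'$ is a lattice (i.e.\ a group). Write $v = \frac12\alpha + \beta$, so $\La_{\alpha,\beta}' = \La_\alpha \cup (v + \La_\alpha)$. Closure under addition reduces to checking $2v = \alpha + 2\beta \in \La_\alpha$, which holds since $\langle \alpha, \alpha + 2\beta\rangle = 4 + 2\langle\alpha,\beta\rangle \equiv 0 \pmod 2$. For integrality I must check $\langle v, \gamma \rangle \in \ZZ$ for all $\gamma \in \La_\alpha$ and $\langle v, v \rangle \in \ZZ$: the former is $\frac12\langle\alpha,\gamma\rangle + \langle\beta,\gamma\rangle \in \ZZ$ because $\gamma \in \La_\alpha$ means $\langle\alpha,\gamma\rangle$ is even; the latter is $\langle v,v\rangle = \frac14\|\alpha\|^2 + \langle\alpha,\beta\rangle + \|\beta\|^2 = 1 + \langle\alpha,\beta\rangle + \|\beta\|^2 \in \ZZ$. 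So $\La_{\alpha,\beta}'$ is an integral lattice. Unimodularity is then immediate from $\det \La_{\alpha,\beta}' = [\La : \La_\alpha]^{-2}\det\La_\alpha \cdot \text{(index factors)}$; more cleanly, $\La_\alpha$ has discriminant group of order $4$, and both $\La$ and $\La_{\alpha,\beta}'$ are index-$2$ overlattices, hence unimodular. For evenness I need $\|v\|^2 \in 2\ZZ$, i.e.\ $1 + \langle\alpha,\beta\rangle + \|\beta\|^2 \equiv 0 \pmod 2$; since $\La$ is even, $\|\beta\|^2$ is even, so this amounts to $\langle\alpha,\beta\rangle$ being odd — which is exactly the hypothesis $\beta \notin \La_\alpha$. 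Then every coset representative $v + \gamma$ with $\gamma \in \La_\alpha$ satisfies $\|v+\gamma\|^2 = \|v\|^2 + 2\langle v,\gamma\rangle + \|\gamma\|^2$, which is even since $2\langle v,\gamma\rangle \in 2\ZZ$ by integrality and $\|\gamma\|^2 \in 2\ZZ$ as $\gamma \in \La$ even.

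The main obstacle is the minimum norm claim: I must show no vector of $\La_{\alpha,\beta}'$ has norm $2$ (norm $0$ only for the zero vector is automatic, and minimum $\le 4$ follows since, e.g., suitable vectors of $\La_\alpha$ or a short vector in the nontrivial coset have norm $4$ — in particular $\alpha \in \La_\alpha$ if $\langle\alpha,\beta\rangle$ parity works, or one argues the coset $v+\La_\alpha$ contains a norm-$4$ vector). Vectors of $\La_\alpha \subset \La$ have norm $\ge 4$ since $\La$ has minimum norm $4$. The issue is the coset $v + \La_\alpha$: I need $\|v + \gamma\|^2 \ge 4$ for all $\gamma \in \La_\alpha$. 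Since this is an even integer, it suffices to rule out norm $2$. Suppose $\|\frac12\alpha + \beta + \gamma\|^2 = 2$ for some $\gamma \in \La_\alpha$; set $\mu = \beta + \gamma \in \La$ (note $\langle\alpha,\mu\rangle = \langle\alpha,\beta\rangle + \langle\alpha,\gamma\rangle$ is odd). Then $2 = \frac14\|\alpha\|^2 + \langle\alpha,\mu\rangle + \|\mu\|^2 = 1 + \langle\alpha,\mu\rangle + \|\mu\|^2$, so $\langle\alpha,\mu\rangle + \|\mu\|^2 = 1$. Writing $\|\mu\|^2 = 2m$ (even, $m \ge 0$, and $m \ge 2$ unless $\mu = 0$), we get $\langle\alpha,\mu\rangle = 1 - 2m$. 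By Cauchy–Schwarz, $|\langle\alpha,\mu\rangle| \le \|\alpha\|\|\mu\| = 2\sqrt{2m}$, so $|1 - 2m| \le 2\sqrt{2m}$, forcing $m \le 2$ (a quick check: $m=3$ gives $|{-5}| = 5 > 2\sqrt6 \approx 4.9$). If $m = 0$ then $\mu = 0$, contradicting $\langle\alpha,\mu\rangle$ odd. If $m = 1$, $\|\mu\|^2 = 2$, impossible in $\La$. If $m = 2$, then $\langle\alpha,\mu\rangle = -3$ and $\|\mu\|^2 = 4$; but then $\|\alpha + \mu\|^2 = 4 - 6 + 4 = 2$, contradicting that $\La$ has minimum norm $4$ (as $\alpha+\mu \ne 0$: if $\alpha = -\mu$ then $\langle\alpha,\mu\rangle = -4 \ne -3$). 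Hence no norm-$2$ vector exists, and $\La_{\alpha,\beta}'$ has minimum norm $4$, completing the proof.
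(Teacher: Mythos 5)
Your proof is correct, and the first two parts (that $\La_{\alpha,\beta}'$ is an even unimodular lattice, and that it is a neighbor of $\La$ sharing $\La_\alpha$) follow essentially the same route as the paper, via the dual lattice $\La_\alpha^*$ and the discriminant-group-of-order-$4$ argument.

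Where you diverge is the minimum-norm claim. You argue by contradiction: you suppose $\|\tfrac12\alpha+\beta+\gamma\|^2=2$, extract $\mu=\beta+\gamma$ with $\langle\alpha,\mu\rangle=1-\|\mu\|^2$, invoke Cauchy--Schwarz to bound $\|\mu\|^2\in\{0,2,4\}$, and rule each case out (the last via $\|\alpha+\mu\|^2=2$). This works, but it is a case analysis. The paper instead uses a single identity,
\[
\Bigl\|\tfrac12\alpha+\beta+\gamma\Bigr\|^2
=\tfrac12\|\alpha+\beta+\gamma\|^2+\tfrac12\|\beta+\gamma\|^2-1,
\]
observes that both $\alpha+\beta+\gamma$ and $\beta+\gamma$ are nonzero elements of $\La$ (they lie in $\beta+\La_\alpha$, hence outside $\La_\alpha\ni 0$) so each has norm at least $4$, giving a lower bound of $3$, and then bumps this to $4$ using the evenness of $\La_{\alpha,\beta}'$ already established. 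The paper's identity collapses your three cases into one line and is the cleaner argument; yours is more elementary and requires no algebraic trick, only Cauchy--Schwarz. Both are valid. One small slip: you hedge that ``$\alpha\in\La_\alpha$ if $\langle\alpha,\beta\rangle$ parity works,'' but $\alpha\in\La_\alpha$ unconditionally since $\langle\alpha,\alpha\rangle=4$ is even; no condition on $\beta$ is needed to see that the minimum is at most $4$.
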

\begin{proof}
Since $\La$ is even, $\frac12\alpha\notin\La=\La^*$. This implies that
$\La_\alpha$ is
a sublattice of index $2$ in $\La$. Clearly, $\frac12\alpha\in\La^*_\alpha$, and
$\beta\in\La=\La^*\subset\La^*_\alpha$, hence $\frac12\alpha+\beta \in\La^*_\alpha$.
Since $2(\frac12\alpha+\beta)\in\La_\alpha$, we conclude that
$\La'_{\alpha,\beta}$ is a
unimodular lattice. Moreover, since
$\langle\alpha,\beta\rangle\equiv1\pmod2$, we have
$\|\frac12\alpha+\beta\|^2 \equiv0\pmod2$. Thus $\La'_{\alpha,\beta}$ is even.

It remains to show that $\La'_{\alpha,\beta}$ has minimum norm $4$. Since
$\alpha\in\La_\alpha\subset\La$, it suffices to show that
%\begin{equation}\label{eq:00}
\[
\|\frac12\alpha+\beta+\gamma\|^2\geq4
\]
%\end{equation}
for all $\gamma\in\La_\alpha$.
This follows from the inequality
\[
\|\frac12\alpha+\beta+\gamma\|^2
=\frac12\|\alpha+\beta+\gamma\|^2+\frac12\|\beta+\gamma\|^2-1
\geq3,
\]
noting that $\Lambda'_{\al,\be}$ is even.
%Suppose $\gamma\in\La_0$. Since $\beta\notin\La_0$, we have
%$\beta+\gamma\in\La\setminus\La_0$ and
%$\alpha+\beta+\gamma\in\La\setminus\La_0$. Thus
%\begin{align}
%4&\leq\|\beta+\gamma\|^2,\label{eq:01}
%\\ \intertext{and}
%4&\leq\|\alpha+\beta+\gamma\|^2\notag\\
%&=4+2(\alpha,\beta+\gamma)+\|\beta+\gamma\|^2.
%\notag
%\end{align}
%The latter implies
%\begin{equation}\label{eq:02}
%(\alpha,\beta+\gamma)\geq-\frac12\|\beta+\gamma\|^2.
%\end{equation}
%Now
%\begin{align*}
%\|\frac12\alpha+\beta+\gamma\|^2&=
%1+(\alpha,\beta+\gamma)+\|\beta+\gamma\|^2
%\\ &\geq
%1+\frac12\|\beta+\gamma\|^2
%&&\text{(by (\ref{eq:02}))}
%\\ &\geq3
%&&\text{(by (\ref{eq:01})).}
%\end{align*}
%Since $\frac12\alpha+\beta+\gamma\in\La'$ and $\La'$ is even, we obtain
%(\ref{eq:00}).
\end{proof}

The following lemma is very useful for our classification.

\begin{lem}\label{augmentation}
Let $\cC$ be a Type~II $\ZZ_4$-code of length $n$ with minimum Euclidean
weight $16$. 
%% with residue code $\cC_1$.
Let $a\in\ZZ_2^{n}$ be a
vector of weight $4$ not in $\cC_1$, such that
the code $\langle \cC_1,a\rangle_{\ZZ_2}$ is doubly even.
Then there exists a Type~II $\ZZ_4$-code 
$\cC'$ such that the minimum Euclidean weight is $16$,
$\cC'_1=\langle \cC_1,a\rangle_{\ZZ_2}$ and
$A_4(\cC')$ is a neighbor of $A_4(\cC)$.
\end{lem}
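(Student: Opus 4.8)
The plan is to pass to the lattice $\Lambda:=A_4(\cC)$, which by Lemma~\ref{lem:A4} is even unimodular of minimum norm $4$, build a suitable neighbor of $\Lambda$ via Lemma~\ref{lem:1c}, and recognize that neighbor as $A_4(\cC')$ for the code $\cC'$ we want. First I would note that $\la\cC_1,a\ra_{\ZZ_2}$, being doubly even, is self-orthogonal, so $a$ is orthogonal to $\cC_1$, i.e.\ $a\in\cC_1^\perp=\cC_0$ (using that $\cC$ is self-dual). Writing $S:=\support(a)$, a set of size $4$, I would take $\alpha\in\ZZ^n$ to be the $0/1$ indicator vector of $S$. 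Then $\|\alpha\|^2=4$ and $\alpha\equiv a\pmod 2$; and since $a\in\cC_0$, the reduction $2\alpha\bmod 4$ is a codeword of $\cC$, so $\alpha=\tfrac12(2\alpha)\in A_4(\cC)=\Lambda$.

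With this $\alpha$ I would invoke Lemma~\ref{lem:1c}. As $\Lambda$ has minimum norm $4$ and $\tfrac12\alpha\neq 0$ has norm $1$, we have $\tfrac12\alpha\notin\Lambda$, so $\Lambda_\alpha$ has index $2$ in $\Lambda$; choose any $\beta\in\Lambda\setminus\Lambda_\alpha$. Lemma~\ref{lem:1c} then produces an even unimodular lattice $\Lambda':=\Lambda'_{\alpha,\beta}=\Lambda_\alpha\cup(\tfrac12\alpha+\beta+\Lambda_\alpha)$ of minimum norm $4$, a neighbor of $\Lambda$. Since $\alpha$ is integral, $\la\alpha,2e_i\ra=2\alpha_i$ is even for all $i$, whence $2\ZZ^n\subseteq\Lambda_\alpha\subseteq\Lambda'$ (recall $2\ZZ^n\subseteq\Lambda$ for any such $A_4$-lattice). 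So $\Lambda'$ is an even unimodular lattice containing $2\ZZ^n$, hence equals $A_4(\cC')$ for a Type~II $\ZZ_4$-code $\cC'$ of length $n$ (the correspondence recalled before Lemma~\ref{lem:A4}). Its minimum Euclidean weight is at least $16$ by Lemma~\ref{lem:A4}, and is exactly $16$ because $2\alpha\in 2\Lambda'$ reduces modulo $4$ to a codeword of $\cC'$ of Euclidean weight $4\cdot 4=16$. By construction $A_4(\cC')$ is a neighbor of $A_4(\cC)$.

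It remains to show $\cC'_1=\la\cC_1,a\ra_{\ZZ_2}$. For this I would use the homomorphism $\phi\colon\tfrac12\ZZ^n\to\ZZ_2^n$, $\lambda\mapsto 2\lambda\bmod 2$, for which $\phi(\Lambda)=\cC_1$, $\phi(\Lambda')=\cC'_1$, $\ker(\phi|_\Lambda)=\Lambda\cap\ZZ^n$, and $\{\gamma\bmod 2\mid\gamma\in\Lambda\cap\ZZ^n\}=\cC_0$. From the description of $\Lambda'$ and $\phi(\tfrac12\alpha)=a$ one gets $\cC'_1=\phi(\Lambda_\alpha)\cup\bigl(a+\phi(\beta)+\phi(\Lambda_\alpha)\bigr)$, so the whole thing reduces to proving $\phi(\Lambda_\alpha)=\cC_1$. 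As $[\Lambda:\Lambda_\alpha]=2$, this is equivalent to $\Lambda\cap\ZZ^n\not\subseteq\Lambda_\alpha$, i.e.\ to the existence of $\gamma\in\Lambda\cap\ZZ^n$ with $\la\alpha,\gamma\ra$ odd; but $\la\alpha,\gamma\ra\equiv\la a,\gamma\bmod 2\ra\pmod 2$ and $\gamma\bmod 2$ ranges over $\cC_0$, so this amounts to $a\notin\cC_0^\perp=\cC_1$ --- which is exactly our hypothesis. Granting $\phi(\Lambda_\alpha)=\cC_1$, also $\phi(\beta)\in\phi(\Lambda)=\cC_1$, and therefore $\cC'_1=\cC_1\cup(a+\cC_1)=\la\cC_1,a\ra_{\ZZ_2}$.

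I expect the crux to be exactly this identity $\phi(\Lambda_\alpha)=\cC_1$. It is the only place where the hypothesis $a\notin\cC_1$ is genuinely used --- as it must be, since otherwise $\cC'_1$ would not properly contain $\cC_1$ --- and it rests on correctly identifying the integral vectors of $A_4(\cC)$ modulo $2$ with $\cC_0$; choosing the lift $\alpha$ to be $\mathbf{1}_S$ (rather than some signed version) is what makes the computation clean. Everything else --- the membership $\alpha\in\Lambda$, the inclusion $2\ZZ^n\subseteq\Lambda'$, the Euclidean-weight count --- is routine manipulation of the definition of $A_4$, with all of the lattice-theoretic substance (even, unimodular, minimum norm $4$, neighbor) supplied by Lemma~\ref{lem:1c}.
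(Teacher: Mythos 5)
Your proof is correct and follows essentially the same route as the paper: pass to $\La=A_4(\cC)$, apply Lemma~\ref{lem:1c} with $\alpha$ the $0/1$ lift of $a$, and recognize $\La'_{\alpha,\beta}$ as $A_4(\cC')$ because it contains the standard $4$-frame. The one point where you deviate is in handling $\beta$: the paper explicitly chooses an \emph{integral} $\beta$ lifting some $b\in\cC_0$ with $\la a,b\ra=1$ (which exists precisely because $a\notin\cC_1$), so that $\phi(\beta)=0$ and the computation $\phi(\La)=\phi(\La_\alpha)$ collapses immediately; you instead take an arbitrary $\beta\in\La\setminus\La_\alpha$ and prove $\phi(\La_\alpha)=\phi(\La)=\cC_1$ directly by showing $\ker(\phi|_\La)=\La\cap\ZZ^n\not\subseteq\La_\alpha$, again using $a\notin\cC_1$. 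Both arguments use the hypothesis $a\notin\cC_1$ in the same place and for the same purpose, so this is a stylistic difference rather than a different proof.
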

\begin{proof}
By the assumption, $\La=A_4(\cC)$ is an even unimodular lattice with minimum
norm $4$. 
% Since $\cC$ is extremal,
% by Lemma~\ref{lem:A4} the lattice $A_4(\cC)$ is
% isomorphic to the Leech lattice, so we may write
% $\Lambda=A_4(\cC)$.
%% Let $\Lambda=A_4(\cC)$. Then $\Lambda$ is isomorphic to the
%% Leech lattice.
%%, that is,  $\Lambda$ is an even unimodular
%% lattice with minimum norm $4$.
Let $e_1,\dots,e_{n}$ be the standard orthonormal basis of
$\RR^{n}$, and let
\[
\alpha=\sum_{i\in\supp(a)}e_i,
\]
where $\supp(a)$ denotes the support of $a$.
%Then $\|\alpha\|^2=4$
Since $\langle \cC_1,a\rangle_{\ZZ_2}$ is self-orthogonal,
we have
$a\in \cC_1^\perp=\cC_0$.  %%, where $\cC_0$ denote the torsion code.
Thus, $\alpha\in\La$ and $\|\alpha\|^2=4$.
Define $\La_\alpha$ by (\ref{eq:11}).
%%%%%%%
Since $a\notin \cC_1=\cC_0^\perp$, there exists $b\in \cC_0$
such that $\langle a,b \rangle=1$. Let $\be\in\ZZ^{n}$ be a vector satisfying
$\be\bmod{2}=b$. Then, $b\in\cC_0$ implies
$\be\in\Lambda$. Moreover,
$\langle a,b \rangle=1$ implies $\langle \alpha,\be \rangle
\equiv1\pmod{2}$.
Thus
Lemma~\ref{lem:1c} implies that the lattice $\La'_{\alpha,\beta}$
defined by (\ref{eq:12}) is an
even unimodular lattice with minimum norm $4$,
which is a neighbor of $\Lambda$.
%%%%%%%
Since the standard $4$-frame $\{2e_i\}_{i=1}^{n}$ is contained in
$\Lambda_\al\subset\Lambda'_{\al,\be}$,
there exists a Type~II $\ZZ_4$-code $\cC'$ such that
$A_4(\cC')=\Lambda'_{\al,\be}$.
% % % % % % 
% Since
% $\cC_1=(2\Lambda)\bmod2=(2\Lambda_\al)\bmod2$,
% we have
% %\begin{align*}
% %\cC'_1&=\cC_1\cup(a+\cC_1)
% %\nexteq
% %((2\Lambda_0)\bmod2)\cup
% %((\alpha+2\Lambda_0)\bmod2)
% %\nexteq
% %(2\Lambda')\bmod2.
% %\end{align*}
% \begin{align*}
% \cC'_1&=
% (2\Lambda'_{\al,\be})\bmod2
% \nexteq
% ((2\Lambda_\al)\bmod2)\cup
% ((\alpha+2\Lambda_\al)\bmod2)
% \nexteq
% \cC_1\cup(a+\cC_1)
% \nexteq
% \langle \cC_1,a\rangle_{\ZZ_2}.
% \end{align*}
% % % % % % 

Since
\begin{align}
\cC_1&=\{\gamma\bmod2\mid\frac12\gamma\in\La\}
\nnexteq
\{\gamma\bmod2\mid\frac12\gamma\in\La_\al\}
\cup\{\gamma \bmod2\mid\frac12\gamma\in\beta+\La_\al\}
%\nnexteq
%\{\gamma\bmod2\mid\gamma\in2\La_\al\}
%\cup\{\gamma \bmod2\mid\gamma\in2\be+2\La_\al\}
\nnexteq
\{\gamma\bmod2\mid\gamma\in2\La_\al\},
\label{eq:n1}
\end{align}
%%%%%%%
we have
\begin{align*}
\cC'_1&=\{\gamma\bmod2\mid\frac12\gamma\in\La'_{\al,\be}\}
\nexteq
\{\gamma\bmod2\mid\frac12\gamma\in\La_\al\}
\cup\{\gamma\bmod2\mid\frac12\gamma\in\frac12\alpha+\beta+\La_\al\}
&&\text{(by (\ref{eq:12}))}
\nexteq
\{\gamma\bmod2\mid\gamma\in2\La_\al\}
\cup\{\gamma\bmod2\mid\gamma\in\alpha+2\La_\al\}
\nexteq
\cC_1\cup(a+\cC_1)
&&\text{(by (\ref{eq:n1}))}
\nexteq
\langle \cC_1,a\rangle_{\ZZ_2}.
\end{align*}
Since $A_4(\cC')$ has minimum norm $4$,
$\cC'$ has minimum Euclidean weight at least $16$
by Lemma~\ref{lem:A4}.
% Since ${\cC'_0}^\perp= \cC'_1=\langle \cC_1,a\rangle_{\ZZ_2}$
% and $\cC'_1$ is self-orthogonal,
% $\cC'_0= \langle \cC_1,a\rangle_{\ZZ_2}^\perp \supset
% \langle \cC_1,a\rangle_{\ZZ_2} \ni a$.
Since $\cC'_0 \supset \cC'_1 \ni a$ and 
$\wt(a)=4$, there is a codeword of
Euclidean weight $16$ in $\cC'$.
Hence, the minimum
Euclidean weight of $\cC'$ is exactly $16$.
\end{proof}

A partial converse of the above lemma also holds.

\begin{lem}\label{-4}
Let $\cC$ be a Type~II $\ZZ_4$-code of length $n$ with minimum Euclidean
weight $16$. Suppose $a\in\cC_1$ and $\wt(a)=4$. Then there exists a Type~II
$\ZZ_4$-code $\cC'$ of length $n$ such that the 
minimum Euclidean weight is $16$,
% \begin{equation}\label{eq:0}
$\cC'_1\subsetneqq\langle\cC'_1,a\rangle=\cC_1$
% \end{equation}
and $A_4(\cC')$ is a neighbor of $A_4(\cC)$.
\end{lem}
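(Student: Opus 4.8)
The plan is to mirror the construction in Lemma~\ref{augmentation}, but now running the neighbor operation "backwards": instead of adjoining the glue vector $\frac12\alpha+\beta$ to pass to a strictly larger residue code, I want to produce a lattice whose residue code is strictly smaller, with $a$ falling out. Set $\La=A_4(\cC)$, which by Lemma~\ref{lem:A4} is even unimodular with minimum norm $4$. Let $e_1,\dots,e_n$ be the standard orthonormal basis, put $\alpha=\sum_{i\in\supp(a)}e_i$, so $\|\alpha\|^2=4$. Because $a\in\cC_1$, the vector $\alpha$ lies in $2\La+\text{(something)}$; more precisely $\frac12\alpha'\in\La$ for some lift $\alpha'$ of $a$, but I actually want to use $\alpha$ itself together with $\La_\alpha=\{\gamma\in\La\mid\langle\alpha,\gamma\rangle\equiv0\pmod2\}$ from (\ref{eq:11}). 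Since $\La$ is even and $\alpha\in\La$ has norm $4$, $\La_\alpha$ is the index-$2$ sublattice on which $\langle\alpha,-\rangle$ is even; note $\alpha\in\La_\alpha$ because $\langle\alpha,\alpha\rangle=4$.

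Next I would form the other neighbor of $\La$ through $\La_\alpha$. The dual $\La_\alpha^*$ contains $\La$ with index $2$, and $\La_\alpha^*/\La_\alpha$ is a Klein four group $\{0,\ \frac12\alpha,\ v,\ \frac12\alpha+v\}$ for a suitable $v$ whose coset represents $\La/\La_\alpha$; the three order-two subgroups give $\La$ itself and two genuine neighbors. One of these neighbors, call it $\La'$, is the one \emph{not} equal to $\La$ and \emph{not} containing $\frac12\alpha$: explicitly $\La'=\La_\alpha\cup(\mu+\La_\alpha)$ where $\mu\in\La_\alpha^*$ satisfies $2\mu\in\La_\alpha$, $\langle\mu,\alpha\rangle$ even, and $\mu\notin\La$. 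By the same norm estimate as in Lemma~\ref{lem:1c} — writing $\|\mu+\gamma\|^2=\frac12\|\alpha+2\mu+2\gamma\|^2/2+\cdots$, or more directly bounding via evenness together with the fact that the relevant vectors of $\La_\alpha^*$ have norm $\geq 3$ hence $\geq 4$ — one checks $\La'$ is even unimodular with minimum norm $4$. (If $\La'$ happened to have a norm-$2$ vector one would instead use that $\La'\cong\La$ via the reflection in $\alpha$, but the honest way is the inequality.) The standard $4$-frame $\{2e_i\}$ still lies in $\La_\alpha\subset\La'$, so $\La'=A_4(\cC')$ for some Type~II $\ZZ_4$-code $\cC'$, which has minimum Euclidean weight at least $16$ by Lemma~\ref{lem:A4}, and it is a neighbor of $\La$ by construction.

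It then remains to compute $\cC'_1$ and to pin the minimum Euclidean weight. As in (\ref{eq:n1}), $\cC_1=\{\gamma\bmod 2\mid\gamma\in 2\La_\alpha\}\cup\{\gamma\bmod 2\mid\gamma\in\alpha+2\La_\alpha\}$ since $\frac12\alpha\in\La$, while $\cC'_1=\{\gamma\bmod2\mid\gamma\in 2\La_\alpha\}\cup\{\gamma\bmod 2\mid\gamma\in 2\mu+2\La_\alpha\}$; since $\mu\notin\La$, the coset $2\mu+2\La_\alpha$ reduces mod $2$ to something not in $\cC_1$, and in fact one sees $\cC'_1=\{\gamma\bmod2\mid\gamma\in2\La_\alpha\}$ together with that extra coset, so that $a\notin\cC'_1$ but $\langle\cC'_1,a\rangle_{\ZZ_2}=\cC_1$ (the two index-$2$ overgroups of $\{\gamma\bmod2\mid\gamma\in2\La_\alpha\}$ inside $\cC_1$ are $\cC'_1$ and $\cC_1$ does not arise — rather, adjoining $a$ to either candidate recovers $\cC_1$, which forces the claimed inclusions). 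Finally, to show the minimum Euclidean weight of $\cC'$ is exactly $16$ and not larger, note $\La'$ has minimum norm exactly $4$ (it is a neighbor of a norm-$4$ lattice and we showed $\geq 4$; a neighbor of the Leech-type lattice of norm $4$ cannot jump to norm $\geq 6$ by general mass/density bounds, or more concretely $\La'\neq\La^*$ has norm-$4$ vectors because a norm-$2$-free unimodular lattice in dimension $n=24$ with no norm-$4$ vectors would have to be Leech, and Leech has norm-$4$ vectors), hence $A_4(\cC')$ has a norm-$4$ vector, giving a Euclidean weight $16$ codeword of $\cC'$.

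\medskip

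\textbf{Main obstacle.} The delicate point is the cleanest identification of the "correct" third neighbor $\La'$ and proving it has minimum norm exactly $4$ — one must be sure one is not landing back on $\La$ (which would make $\cC'_1=\cC_1$, contradicting the strict inclusion) nor on a lattice with shorter vectors. Getting the bookkeeping of $\La_\alpha^*/\La_\alpha$ and the mod-$2$ reductions exactly right, so that $\cC'_1\subsetneqq\langle\cC'_1,a\rangle=\cC_1$ comes out on the nose, is where the real work lies; the evenness/minimum-norm estimate, by contrast, is the routine part, essentially copied from Lemma~\ref{lem:1c}.
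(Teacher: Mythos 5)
Your overall plan—pass to the index-$2$ sublattice $\La_\alpha$ and move to the \emph{other} even neighbor so that the residue code shrinks—is the right one, and it matches the paper's strategy. But the execution has several genuine gaps, and the most important one is precisely what you flag in your ``main obstacle'' paragraph: you have not actually ensured that $a$ falls out of $\cC'_1$.

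The crux is the choice of $\alpha$. You take $\alpha=\sum_{i\in\supp(a)}e_i$, the all-$+1$ indicator. The paper instead first picks a lift $\alpha'\in\ZZ^n$ of $a$ with $\frac12\alpha'\in\La$ and $\alpha'_i=\pm1$ on $\supp(a)$, and then defines $\alpha$ by \emph{flipping exactly one sign} of $\alpha'$ (and truncating to $\supp(a)$). This forces $\langle\alpha,\alpha'\rangle=2$, so that $\beta=-\frac12\alpha'$ lies in $\La\setminus\La_\alpha$, and, crucially, $\alpha-\alpha'$ has all coordinates even, so $\alpha-\alpha'\bmod2=\allzero$. That last fact is what makes the second coset of $\La'_{\alpha,\beta}$ collapse onto the first in the residue computation, giving $\cC'_1=\{\gamma\bmod2\mid\frac12\gamma\in\La_\alpha\}=\{b\in\cC_1\mid\langle b,c\rangle=0\}$ for $c=\frac12(\alpha-\alpha')\bmod2$, and $\langle a,c\rangle=1$ pins down $a\notin\cC'_1$. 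With your all-$+1$ choice of $\alpha$ and an arbitrary $\beta$, the second coset contributes $a+(2\beta\bmod2)+\{\gamma\bmod2\mid\gamma\in2\La_\alpha\}$, which in general is \emph{not} absorbed, and whether $a$ lies in $\{\gamma\bmod2\mid\gamma\in2\La_\alpha\}$ is governed by the parity of the number of $3$'s among the nonzero coordinates of any $\ZZ_4$-codeword lifting $a$ — an invariant of $(\cC,a)$ that you cannot control and that can be either parity. So with your $\alpha$ the conclusion $a\notin\cC'_1$ can simply fail.

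There are two secondary errors as well. First, you assert ``since $\frac12\alpha\in\La$'' when rewriting $\cC_1$; this is false: $\|\frac12\alpha\|^2=1$, while $\La$ has minimum norm $4$, so $\frac12\alpha\notin\La$ (this is true in the paper too, and the paper never uses such a claim — it uses $\frac12\alpha'\in\La$, which is a different vector). Second, your description of the glue vector $\mu$ is the wrong one: in $\La_\alpha^*/\La_\alpha\cong\ZZ_2\times\ZZ_2$, the coset with $\langle\mu,\alpha\rangle$ even and $\mu\notin\La$ is $\frac12\alpha+\La_\alpha$, i.e.\ the \emph{odd} unimodular neighbor, not the even one $\La'_{\alpha,\beta}=\La_\alpha\cup(\frac12\alpha+\beta+\La_\alpha)$ you actually want. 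Finally, your argument that the minimum Euclidean weight is exactly $16$ invokes $24$-dimensional lattice classification, but the lemma is stated for general $n$; the paper's observation that $a\in\cC_1\subset\cC_0\subset\cC'_0$ with $\wt(a)=4$ produces a Euclidean-weight-$16$ codeword directly and works for all $n$.
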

\begin{proof}
By the assumption, $\La=A_4(\cC)$ is an even unimodular lattice with minimum
norm $4$. We may assume without loss of generality
%\begin{equation}\label{eq:1}
$a_1=1$ in $a=(a_1,\ldots,a_n)$.
%\end{equation}
Since $a\in\cC_1$, there exists
$\alpha'=(\alpha'_1,\ldots,\alpha'_n) \in\ZZ^n$ such that
$\alpha'\bmod2=a$ and
\begin{align}
\frac12\alpha'&\in\La.\label{eq:2}
%\\ \alpha'\bmod2&=a.\label{eq:3}
\end{align}
We may assume without loss of generality
%\begin{equation}\label{eq:4}
\[
\alpha'_i=\pm1\quad(i\in\supp(a)).
\]
%\end{equation}
%In view of (\ref{eq:1}), define
% Define $\alpha=(\alpha_1,\ldots,\alpha_n)\in\ZZ^n$ and $c\in\ZZ_2^n$ by
Define $\alpha=(\alpha_1,\ldots,\alpha_n)\in\ZZ^n$ by
%\begin{equation}\label{eq:5}
\[
\alpha_i=\begin{cases}
-\alpha'_1&\text{if $i=1$,}\\
\alpha'_i&\text{if $i\in\supp(a)\setminus\{1\}$,}\\
0&\text{otherwise,}
\end{cases}
\]
%\end{equation}
and set $c=\frac12(\alpha-\alpha') \bmod 2 \in\ZZ_2^n$.
Then $\alpha\in\La$, $\|\alpha\|^2=4$, and
\begin{align}
\alpha\bmod2&=\alpha'\bmod2=a,\label{eq:5x}\\
%\|\alpha\|^2&=4,\label{eq:6}\\
\langle\alpha,\alpha'\rangle&=2,\label{eq:7}\\
\langle a,c\rangle&=1.\label{eq:7ac}
\end{align}
%Moreover, since $\alpha\bmod2=a\in\cC_1\subset\cC_1^\perp=\cC_0$, we
%have
%\begin{equation}\label{eq:8}
%\alpha\in\La.
%\end{equation}
Define $\La_\alpha$ by (\ref{eq:11}), and set
%\begin{equation}\label{eq:9}
\[
\beta=-\frac12\alpha'\in\La.
\]
%\end{equation}
Then by (\ref{eq:7}), we have $\beta\in\La\setminus\La_\alpha$. Thus
Lemma~\ref{lem:1c} implies that the lattice $\La'_{\alpha,\beta}$
defined by (\ref{eq:12}) is an
even unimodular lattice with minimum norm $4$, 
which is a neighbor of $\La$.
Since $\alpha\in\ZZ^n$, the
standard $4$-frame of $\La$ is contained in $\La_\al$. This implies that there
exists a Type~II $\ZZ_4$-code $\cC'$ of length $n$ 
such that $\La'_{\al,\be}=A_4(\cC')$.
% We are now ready to show 
%It remains to prove
% (\ref{eq:0}). 

Since
%$\alpha-\alpha'\in2\ZZ^n$ by (\ref{eq:5x}), we have
%\begin{equation}\label{eq:10}
%\{\gamma\bmod2\mid\gamma\in\alpha-\alpha'+2\La_0\}
%=\{\gamma\bmod2\mid\frac12\gamma\in\La_0\}.
%\end{equation}
%Also, setting
%\begin{equation}\label{eq:11x}
%c=\frac12(\alpha-\alpha')\bmod2,
%\end{equation}
%we have, for $\frac12\gamma\in\La$,
%\begin{align}
%(\alpha,\frac12\gamma)\bmod2&=
%(\alpha-\alpha',\frac12\gamma)\bmod2
%&&\text{(by (\ref{eq:2}))}
%\nnexteq
%(\frac12(\alpha-\alpha'),\gamma)\bmod2
%\nnexteq
%(c,\gamma\bmod2).\label{eq:12x}
%\end{align}
%Thus
\begin{align}
\La_\al&=\{\frac12\gamma\in\La\mid \langle\alpha,\frac12\gamma\rangle
\equiv0\pmod2\}
&&\text{(by (\ref{eq:11}))}
\nnexteq
\{\frac12\gamma\in\La\mid \langle\alpha-\alpha',\frac12\gamma\rangle
\equiv0\pmod2\}
&&\text{(by (\ref{eq:2}))}
\nnexteq
\{\frac12\gamma\in\La\mid\langle c,\gamma\bmod2 \rangle=0\},
%&&\text{(by (\ref{eq:12x})).}
\label{eq:13}
\end{align}
we have
%Now
\begin{align*}
\cC'_1&=\{\gamma\bmod2\mid\frac12\gamma\in\La'_{\al,\be}\}
\nexteq
\{\gamma\bmod2\mid\frac12\gamma\in\La_\al\}
\cup\{\gamma\bmod2\mid\frac12\gamma\in\frac12\alpha+\beta+\La_\al\}
&&\text{(by (\ref{eq:12}))}
\nexteq
\{\gamma\bmod2\mid\frac12\gamma\in\La_\al\}
\cup\{\gamma\bmod2\mid\gamma\in\alpha-\alpha'+2\La_\al\}
%&&\text{(by (\ref{eq:9}))}
\nexteq
\{\gamma\bmod2\mid\frac12\gamma\in\La_\al\}
&&\text{(by (\ref{eq:5x}))}
%&&\text{(by (\ref{eq:10}))}
\nexteq
\{\gamma\bmod2\mid\frac12\gamma\in\La,\langle c,\gamma\bmod2\rangle=0\}
&&\text{(by (\ref{eq:13}))}
\nexteq
\{b\in\cC_1\mid \langle b,c\rangle=0\}.
\end{align*}
%Since
%\begin{align*}
%(a,c)&=(\alpha,\frac12(\alpha-\alpha'))\bmod2
%&&\text{(by (\ref{eq:5x}), (\ref{eq:11x})),}
%\nexteq
%\left(\frac12\|\alpha\|^2-\frac12(\alpha,\alpha')\right)\bmod2
%\nexteq1
%&&\text{(by (\ref{eq:6}), (\ref{eq:7})),}
%\end{align*}
%we have $a\notin\cC'_1$.
% This proves (\ref{eq:0}).
It follows from (\ref{eq:7ac}) that
$a \not \in \cC'_1$, and hence 
$\cC'_1\subsetneqq\langle\cC'_1,a\rangle=\cC_1$.

Since $A_4(\cC')$ has minimum norm $4$,
$\cC'$ has minimum Euclidean weight at least $16$
by Lemma~\ref{lem:A4}.
Since $a \in \cC_1 \subset \cC_0 \subset \cC'_0$
and $\wt(a)=4$, there is a codeword of
Euclidean weight $16$ in $\cC'$.
Hence, the minimum
Euclidean weight of $\cC'$ is exactly $16$.
\end{proof}

In Section~\ref{sec:4}, we shall give analogues of
Lemmas~\ref{augmentation} and \ref{-4} for moonshine codes.

%%%%%%%%%%%%%%%%%%%%%%%%%%%%%%%%%%%%%%
\subsection{Complete classification}\label{subsec:CC}
Here, we say that a code $C$ of length $24$ satisfying
(\ref{eq:b1})--(\ref{eq:b3}) is
{\em realizable} if $C$ can be realized as the residue code of
some extremal Type~II $\Z_4$-code. %% in other words,
%% $\EuD(C)$ is a moonshine code.

There exist nine inequivalent doubly even self-dual codes of length 
$24$~\cite{PS75}. The extended Golay code $g_{24}$ is the unique doubly even
self-dual $[24,12,8]$ code, and the other codes have minimum weight $4$ and
these codes are described by specifying the subcodes spanned by the codewords
of weight $4$, namely, $d_{12}^2,d_{10}e_7^2,d_8^3,d_6^4,d_{24},d_4^6,
e_8^3, d_{16}e_8$.
Since any code $C$ of length $24$ satisfying (\ref{eq:b1})--(\ref{eq:b3})
is contained in a doubly even self-dual code of length
$24$,
%and doubly even self-dual codes of length $24$ have been classified
%\cite{PS75},
%since every doubly even self-dual code satisfies
%(\ref{eq:b1})--(\ref{eq:b3}),
the classification of codes $C$ satisfying (\ref{eq:b1})--(\ref{eq:b3})
can be done by taking
successively subcodes of codimension $1$ starting from doubly even self-dual
codes. This method allows us to classify all codes satisfying
%% (\ref{eq:b1})--(\ref{eq:b3}). We list in Table~\ref{Tab:24} the numbers of
%% inequivalent codes of length $24$ satisfying (\ref{eq:b1})--(\ref{eq:b3}).
(\ref{eq:b1})--(\ref{eq:b3}). We list in the second column of
Table~\ref{Tab:24} the numbers
of inequivalent $[24,k]$ codes satisfying
(\ref{eq:b1})--(\ref{eq:b3}).
We remark that this classification is of independent interest, as it forms a basis
for a possible classification of extremal Type~II $\ZZ_4$-codes of length $24$.

%% {\bf ** Table 1 was moved, please check if the place is OK (H)}

%%%%%%%%%%%%%%%%%%%%%%%%%%%%%%%%%%%%%%%%%%%%
\begin{table}[bth]
\caption{Numbers of inequivalent codes of length $24$ satisfying
(\ref{eq:b1})--(\ref{eq:b3})}
\label{Tab:24}
\begin{center}
\begin{tabular}{|c|c|cc|cc|}
\hline
Dimensions $k$ & Total
&$R_{k,8}$ & $R_{k,4}$
&$N_{k,8}$ & $N_{k,4}$ \\
\hline
12& 9 & 1 &  8 & 0 &  0 \\
11&21 & 1 & 20 & 0 &  0 \\
10&49 & 3 & 44 & 0 &  2 \\
 9&60 & 6 & 40 & 4 & 10 \\
 8&32 & 4 & 16 & 8 &  4 \\
 7& 7 & 3 &  2 & 2 &  0 \\
 6& 1 & 1 &  0 & 0 &  0 \\
%% 12& 9 &   9 & 1 &  8 &  0 & 0 &  0 \\
%% 11&21 &  21 & 1 & 20 &  0 & 0 &  0 \\
%% 10&49 &  47 & 3 & 44 &  2 & 0 &  2 \\
%%  9&60 &  46 & 6 & 40 & 14 & 4 & 10 \\
%%  8&32 &  20 & 4 & 16 & 12 & 8 &  4 \\
%%  7& 7 &   5 & 3 &  2 &  2 & 2 &  0 \\
%%  6& 1 &   1 & 1 &  0 &  0 & 0 &  0 \\
\hline
\end{tabular}
\medskip

$R_{k,d}=$ the number of inequivalent realizable $[24,k,d]$ codes.

$N_{k,d}=$ the number of inequivalent non-realizable $[24,k,d]$ codes.
\end{center}
\end{table}
We use the algorithm of Rains~\cite{Rains} to determine
if a given $[24,k]$ code $C$ satisfying (\ref{eq:b1})--(\ref{eq:b3})
is realizable or not.
The algorithm is described in the form of the proof of
\cite[Theorem~3]{Rains} for classifying self-dual
$\ZZ_4$-codes,
and its modification to Type~II $\ZZ_4$-codes is straightforward.
Here, we describe the algorithm briefly.
%% Given a $[24,k]$ code $C$ satisfying (\ref{eq:b1})--(\ref{eq:b3}),
We first construct the action of the automorphism group $\Aut(C)$ of $C$
on the quotient of the
$1+\frac{k(k-1)}{2}$ dimensional space of all
Type~II $\ZZ_4$-codes $\cC$ with $\cC_1=C$, by column negations.
This defines a homomorphism from $\Aut(C)$ to $AGL(m,2)$,
where $m$ is the dimension of the quotient space, and
the orbits are in one-to-one correspondence with
equivalence classes of Type~II $\ZZ_4$-codes $\cC$ with $\cC_1=C$.
%% Hence, we find all Type~II $\ZZ_4$-codes $\cC$
%% with prescribed residue.
By enumerating orbit representatives, we obtain
all Type~II $\ZZ_4$-codes $\cC$ with $\cC_1=C$ up to equivalence.
If none of the codes $\cC$ with $\cC_1=C$ is extremal,
we conclude that $C$ is non-realizable.
%% Note that this algorithm can be used for dimensions $\dim(\cC_1) \le 10$,
%% due to the computational complexity.
%% Note that this algorithm can be implemented easily when
%% $\dim(C) \le 10$, since the maximum value of $m$ turns
%% out to be $20$ (?).
%%
This algorithm can be executed when $\dim C \le 10$, since the maximum value
of $m$ turns out to be $26$. Note that Rains~\cite[p.~220]{Rains} in 1999
commented that direct orbit finding of a $26$-dimensional matrix group is
somewhat tricky. However, with 10GB of memory, such a computation can be
done without problem nowadays. In particular, we obtain the following result.

\begin{prop}
Up to equivalence, there  is a unique extremal Type~II $\ZZ_4$-code of length
$24$ whose residue code has dimension $6$.
\end{prop}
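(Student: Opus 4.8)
The plan is to first pin down the unique $[24,6]$ code $C$ satisfying \eqref{eq:b1}--\eqref{eq:b3}, then use the augmentation machinery in reverse to show it is realizable, and finally use an orbit count to show the extremal $\ZZ_4$-code sitting over it is unique. By the last assertion of the lemma preceding Section~\ref{Sec:Enum} (and the entry in the $k=6$ row of Table~\ref{Tab:24}), there is exactly one $[24,6]$ code with these properties; from the classification described in Subsection~\ref{subsec:CC} one identifies it concretely --- it is the subcode of a doubly even self-dual code obtained by six successive codimension-one steps, and it has minimum weight $8$ (since $N_{6,8}=0$ and $R_{6,8}=1$ in the table, and no $[24,6,4]$ code with even dual of minimum weight $\ge 4$ survives). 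So first I would record this code explicitly, say as a fixed generator matrix $G$, and note $\Aut(C)$.

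Next, to see that $C$ is realizable, I would invoke the algorithm of Rains described after Table~\ref{Tab:24}: since $\dim C = 6 \le 10$, one constructs the action of $\Aut(C)$ on the quotient of the $1+\binom{6}{2}=16$-dimensional space of Type~II $\ZZ_4$-codes $\cC$ with $\cC_1=C$ by column negations, enumerates orbit representatives, and checks which (if any) are extremal. The conclusion to be extracted is twofold: at least one such $\cC$ is extremal --- equivalently, by Lemma~\ref{lem:A4}, has $A_4(\cC)$ isometric to the Leech lattice --- which shows $C$ is realizable; and, crucially, exactly one orbit yields an extremal code. Alternatively, realizability can be exhibited directly: the binary Golay code $g_{24}$ is the residue code of the well-known extremal Type~II $\ZZ_4$-code of length $24$, and repeated application of Lemma~\ref{-4} (removing a weight-$4$ word --- though here one must instead build up from a small code, so Lemma~\ref{augmentation} applied in the other direction is the relevant tool) connects the $k=6$ code to a larger realizable one; but since the code has minimum weight $8$ it has no weight-$4$ codewords, so the clean route is the orbit enumeration, which simultaneously settles existence and uniqueness.

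The uniqueness statement is then precisely the statement that the $\Aut(C)$-action on the $16$-dimensional affine space has a single orbit consisting of extremal codes, because --- as recalled in the algorithm description --- orbits of this action are in bijection with equivalence classes of Type~II $\ZZ_4$-codes $\cC$ with $\cC_1=C$, and by the lemma at the end of Section~\ref{Sec:Pre} every extremal Type~II $\ZZ_4$-code of length $24$ whose residue code has dimension $6$ must have that residue code equivalent to $C$. Combining: any such code is equivalent to one with $\cC_1=C$ exactly, and all of those that are extremal form one orbit, hence one equivalence class.

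\textbf{Main obstacle.} The conceptual content is light; the real work --- and the step I expect to be the bottleneck --- is the computation. One must correctly identify the unique $[24,6]$ code from the classification, and then carry out the orbit enumeration of the $\Aut(C)$-action on the $16$-dimensional $\FF_2$-affine space, verifying that exactly one orbit gives a code with $A_4(\cC)\cong\Lambda$ (equivalently $d_E = 8$). Since $16 \le 26$, this is well within the feasible range mentioned in the text, so the obstacle is one of careful bookkeeping rather than of principle: getting $\Aut(C)$ right, setting up the column-negation action and the homomorphism to $AGL(16,2)$ correctly, and checking extremality of the representatives via the minimum Euclidean weight.
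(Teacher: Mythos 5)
Your proposal is correct and follows essentially the same route as the paper: identify the unique $[24,6]$ code satisfying \eqref{eq:b1}--\eqref{eq:b3} from the classification, then run the Rains orbit-enumeration algorithm for that fixed residue code and observe that exactly one orbit yields a code with minimum Euclidean weight $16$. The only small imprecision is in the ``main obstacle'' paragraph: the orbit enumeration is performed on the $m$-dimensional quotient of the $1+\binom{6}{2}=16$-dimensional space by column negations (with $m<16$), not on the $16$-dimensional space itself, though this does not affect the argument.
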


We denote this code by
$\EuC^\natural$ and its generator matrix is given in Figure 1.

\begin{figure}[tbh] \centering {\footnotesize
\[
\begin{bmatrix}
1 1 1 1 &1 1 1 1 &1 1 1 1 &1 1 1 1& 0 0 0 0& 0 0 0 0 \\
0 2 0 0 &0 0 0 0 &1 1 1 1 &3 1 1 1& 0 0 0 2& 0 0 0 0 \\
1 1 1 1 &1 1 1 1 &0 0 0 0 &0 0 0 0& 1 1 1 1& 1 1 1 1 \\
0 1 0 0 &1 0 1 1 &1 0 1 3 &0 1 0 2& 1 0 1 1& 0 1 0 0 \\
1 1 1 0 &0 0 0 1 &1 1 3 0 &0 2 0 1& 0 0 0 1& 1 1 1 0 \\
0 1 1 1 &1 0 0 0 &3 0 2 0 &0 1 1 1& 0 1 1 1& 1 0 0 0 \\
0 0 0 0 &0 0 0 0 &2 0 2 2 &0 2 0 0& 0 0 0 0& 0 0 0 0 \\
0 0 0 0 &0 0 0 0 &2 2 2 0 &0 0 0 2& 0 0 0 0& 0 0 0 0 \\
0 0 0 0 &0 0 0 0 &2 0 0 0 &0 2 2 2& 0 0 0 0& 0 0 0 0 \\
0 2 0 0 &2 0 2 2 &0 0 0 0 &0 0 0 0& 0 0 0 0& 0 0 0 0 \\
2 2 2 0 &0 0 0 2 &0 0 0 0 &0 0 0 0& 0 0 0 0& 0 0 0 0 \\
0 2 2 2 &2 0 0 0 &0 0 0 0 &0 0 0 0& 0 0 0 0& 0 0 0 0 \\
0 2 0 0 &0 0 0 2 &2 0 2 0 &0 0 0 0& 0 0 0 0& 0 0 0 0 \\
0 2 0 0 &2 0 0 0 &2 0 0 0 &0 2 0 0& 0 0 0 0& 0 0 0 0 \\
0 2 2 0 &0 0 0 0 &2 0 0 0 &0 0 0 2& 0 0 0 0& 0 0 0 0 \\
0 2 0 0 &0 0 0 2 &0 0 0 0 &0 0 0 0& 0 0 0 2& 0 2 0 0 \\
0 2 0 0 &2 0 0 0 &0 0 0 0 &0 0 0 0& 0 0 2 2& 0 0 0 0 \\
0 2 2 0 &0 0 0 0 &0 0 0 0 &0 0 0 0& 0 0 0 2& 2 0 0 0
%% changed (2011/9)
\end{bmatrix}
\]
\caption{A generator matrix of the extremal Type~II $\Z_4$-code $\EuC^\natural
$} \label{fig:z4} }
\end{figure}

When $\dim C=11$ or $12$,
the value of $m$ ranges from $33$ to $46$, and a direct method will fail.
However, we randomly found an extremal Type~II $\ZZ_4$-code $\cC$
with $\cC_1=C$
without finding all inequivalent Type~II $\ZZ_4$-codes.
The two doubly even self-dual codes with labels $g_{24}$ and $d_{24}$ can be
realized as the residue codes of some extremal Type~II $\ZZ_4$-codes of length
$24$~\cite{CS97}.
%Moreover, Young and Sloane
%found an extremal Type II $\ZZ_4$-code $\EuC$ with $C={\EuC}_1$ for each $C$ of
%the remaining seven codes (cf.~\cite[Postscript]{CS97}).
%%% \end{rem}
%%% \begin{rem}\label{rem:24-2}
%%% The above result by Young and Sloane described in \cite[Postscript]{CS97} does
%The above result does
%not appear in the literature. In Appendix~\ref{Ap:Z4-24},
%we give such an extremal Type~II
%$\ZZ_4$-code.
%% $\EuC$ with $C={\EuC}_1$ for each $C$ of the remaining seven codes.
Moreover, Young and Sloane claim to have found an extremal Type II
$\ZZ_4$-code $\EuC$ with $C={\EuC}_1$ for each $C$ of the remaining seven
codes (cf.~\cite[Postscript]{CS97}), although no explicit information about such
codes has been published since then. In Appendix~\ref{Ap:Z4-24}, we give such
an extremal Type~II $\ZZ_4$-code for each of the seven doubly even self-dual
codes.
In particular,
our result for the case $\dim C=12$
confirms the claim in~\cite[Postscript]{CS97}.

%% due to the computational complexity.
%In this way, we were able to determine the realizability of
%all codes in Table~\ref{Tab:24}.
%%
%% For a $[24,k]$ code $C$ satisfying (\ref{eq:b1})--(\ref{eq:b3})
%% with $k=11$ and $12$,
%% using the search method described in \cite{Z4-PLF}
%% (see also Appendix \ref{Ap:Z4-24}),
%% we have found an extremal Type~II $\ZZ_4$-code $\cC$ with $\cC_1=C$.
%% In this way, using the two methods,
%% %% we were able to determine whether all codes in Table~\ref{Tab:24}
%% %% are realizable or not.
%% we were able to determine the realizability of all codes in Table~
%% \ref{Tab:24}.

%% \begin{rem}
%% In Table \ref{Tab:24-nrc}, we give the dimension $m$ and the number $N$
%% of orbits for some non-realizable codes.
%% %% It turns out that the ten codes given in Table \ref{Tab:24-nrc}
%% %% are maximal non-realizable codes.
%% \textbf{We describe why the ten codes are considered}
%% \end{rem}

%%%%%%%%%%%%%

In Table~\ref{Tab:24}, we list the number $R_{k,d}$ of
inequivalent realizable $[24,k,d]$ codes.
All such codes with $d=8$
are listed in Table~\ref{Tab:24-rc},
where $C_6={\EuC^\natural}_1$
and
$C_{7,1},C_{7,2}$
are defined in Appendix~\ref{subsec:OMS},
and the codes other than $C_6,C_{7,1},C_{7,2}$ are
generated by the code $C$ and the vectors $v$ listed in
Table~\ref{Tab:24-rv}.
Note that $C_6,C_{7,1},C_{7,2}$ are
minimal subject to (\ref{eq:b1})--(\ref{eq:b3}).
Also, in Table~\ref{Tab:24}, we list the number $N_{k,d}$
of inequivalent non-realizable $[24,k,d]$ codes.
Maximal codes among these codes are listed in Table~\ref{Tab:24-nrc},
where the codes are generated by $C_6$ and the vectors $v$ listed in
Table~\ref{Tab:24-nrv}.
All other non-realizable codes can be obtained from a maximal
one; see Theorem~\ref{finalresult}~(iii) below.
Also, in Table~\ref{Tab:24-nrc}, we give the dimension $m$
of the quotient space and the number $N$
of the equivalence classes of Type~II $\ZZ_4$-codes $\cC$
with $\cC_1=C$
for the maximal non-realizable codes $C$.
%% In Table \ref{Tab:24-nrc}, we also give the dimension $m$
%% of the quotient space and the number $N$
%% of the orbits for the maximal non-realizable codes.
Generator matrices of all codes in Table~\ref{Tab:24}
can be obtained electronically from

%\begin{verbatim}
\url{http://www.math.is.tohoku.ac.jp/~munemasa/de24extremalresidue.htm}
%\end{verbatim}

The following is the main theorem of the paper.

\begin{thm}\label{finalresult}
Let $C$ be a doubly even code of length $24$ containing $\allone$ such
that $C^\perp$ has minimum weight at least $4$. Then the following are
equivalent.
\begin{enumerate}
%\item[(i)] there exists an extremal Type II $\ZZ_4$-code
%$\cC$ with $\cC_1=C$;
\item[(i)] the code $C$ is the residue code of an extremal Type~II $\Z_4$-code;
\item[(ii)] successive applications of weight $4$ augmentation to
one of the codes in Table~\ref{Tab:24-rc} gives a code equivalent to $C$;
\item[(iii)] none of the codes in Table~\ref{Tab:24-nrc} can be
obtained by successive applications of weight $4$ augmentation to
a code equivalent to $C$.
\end{enumerate}
\end{thm}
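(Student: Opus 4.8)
The plan is to establish the cycle of implications $(i)\Rightarrow(ii)\Rightarrow(iii)\Rightarrow(i)$, using Lemmas~\ref{augmentation} and~\ref{-4} as the engine that converts between extremal $\ZZ_4$-codes and weight~$4$ augmentations, together with the finite classification data recorded in Tables~\ref{Tab:24-rc} and~\ref{Tab:24-nrc}.

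For $(i)\Rightarrow(ii)$, suppose $C=\cC_1$ for an extremal Type~II $\ZZ_4$-code $\cC$ of length $24$. I would argue by downward induction on $\dim C$. If $C$ already appears in Table~\ref{Tab:24-rc}, we are done. Otherwise $C$ properly contains a minimal realizable code, and the key point is that $C$ must contain some weight~$4$ vector $a$ which is \emph{not} needed to generate $C$ in a minimal way; more precisely, I claim one can find $a\in C$ with $\wt(a)=4$ such that $C'=\{b\in C\mid\langle b,c\rangle=0\}$ for a suitable $c$ is still doubly even, contains $\allone$, has $(C')^\perp$ of minimum weight $\ge4$, and $\langle C',a\rangle_{\ZZ_2}=C$. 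Lemma~\ref{-4} then produces an extremal Type~II $\ZZ_4$-code $\cC'$ with $\cC'_1=C'$, so $C'$ is realizable of strictly smaller dimension, and by induction $C'$ is obtained from Table~\ref{Tab:24-rc} by weight~$4$ augmentations; appending $a$ gives $C$. The existence of such an $a$ is guaranteed because the realizable codes of minimum dimension are exactly $C_6,C_{7,1},C_{7,2}$ (the proposition preceding the theorem and the discussion around Table~\ref{Tab:24-rc}), so any larger $C$ has a weight~$4$ codeword outside a fixed minimal subcode, and one checks this against the classification in Table~\ref{Tab:24}.

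For $(ii)\Rightarrow(i)$, this is the ``easy'' direction: each code in Table~\ref{Tab:24-rc} is realizable by construction (the codes $C_6,C_{7,1},C_{7,2}$ via $\EuC^\natural$ and Appendix~\ref{subsec:OMS}, the others from the explicit vectors $v$ in Table~\ref{Tab:24-rv}), and Lemma~\ref{augmentation} shows that realizability is preserved under weight~$4$ augmentation: if $\langle\cC_1,a\rangle_{\ZZ_2}$ is doubly even with $\wt(a)=4$ and $a\notin\cC_1$, then there is an extremal Type~II $\ZZ_4$-code $\cC'$ with $\cC'_1=\langle\cC_1,a\rangle_{\ZZ_2}$. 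For $(iii)\Rightarrow(i)$ (equivalently $\neg(i)\Rightarrow\neg(iii)$), I would invoke the completeness of the classification: if $C$ is non-realizable, then by the exhaustive computation recorded in Table~\ref{Tab:24} and Rains' algorithm, $C$ is a subcode of some maximal non-realizable code, and moreover every non-realizable code is obtained from a maximal one by taking subcodes; dually, every such $C$ sits inside a code in Table~\ref{Tab:24-nrc} via weight~$4$ augmentations, which is exactly the negation of~(iii). The implication $(i)\Rightarrow\neg(iii)$ follows from Lemma~\ref{augmentation} together with the fact that the codes in Table~\ref{Tab:24-nrc} are genuinely non-realizable (verified by the orbit enumeration, since $m\le 26$ for those codes).

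The main obstacle is the $(i)\Rightarrow(ii)$ step, specifically producing the weight~$4$ vector $a$ with which to ``peel off'' a layer while keeping conditions~\eqref{eq:b1}--\eqref{eq:b3} intact: one must ensure that removing $a$ does not destroy doubly-evenness, the presence of $\allone$, or the dual minimum distance, and this is where the explicit structure of the minimal realizable codes $C_6,C_{7,1},C_{7,2}$ and the complete list in Table~\ref{Tab:24} are essential rather than incidental. The remaining subtlety is bookkeeping the equivalence relation: augmentations and the subcode-taking in Lemma~\ref{-4} are only defined up to the action of the relevant automorphism groups, so at each inductive step one works with equivalence classes, and the finiteness of Table~\ref{Tab:24} makes this legitimate. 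All three implications ultimately rest on the exhaustiveness of the computer classification summarized in Table~\ref{Tab:24}, with Lemmas~\ref{augmentation} and~\ref{-4} supplying the inductive bridges between adjacent dimensions.
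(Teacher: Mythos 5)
Your decomposition into implications is close in spirit to the paper's, which proves $(ii)\Rightarrow(i)\Rightarrow(iii)$ from Lemma~\ref{augmentation} and $(iii)\Rightarrow(ii)$ by a separate computation, but two points need attention.

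First, the step $(i)\Rightarrow(ii)$ does not require the elaborate search for a ``removable'' weight~$4$ vector that you describe. Lemma~\ref{-4} already does all the work: whenever $C=\cC_1$ is realizable and $C$ contains \emph{any} codeword $a$ of weight~$4$, the lemma produces an extremal Type~II $\ZZ_4$-code $\cC'$ with $\cC'_1\subsetneq\langle \cC'_1,a\rangle=C$, and since $\cC'$ is again extremal its residue code automatically satisfies conditions~\eqref{eq:b1}--\eqref{eq:b3}. One simply iterates until the residue code has minimum weight~$8$; any such realizable code must appear in Table~\ref{Tab:24-rc}, because a code with minimum weight~$8$ cannot be a weight~$4$ augmentation of anything. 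You need not worry about choosing $a$ carefully, nor about preserving the conditions~\eqref{eq:b1}--\eqref{eq:b3} by hand. (Also, ``$(i)\Rightarrow\neg(iii)$'' in your last paragraph should read ``$(i)\Rightarrow(iii)$''.)

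Second, and more seriously, your argument for $(iii)\Rightarrow(i)$ has a genuine gap. You pass from ``$C$ is a subcode of some maximal non-realizable code'' to ``$C$ reaches a code in Table~\ref{Tab:24-nrc} by a chain of weight~$4$ augmentations''. These are not the same: a codimension-$1$ overcode $C'\supset C$ is a weight~$4$ augmentation of $C$ only if $C'\setminus C$ contains a weight~$4$ vector, and a priori there could be non-realizable $C$ all of whose relevant overcodes fail this. The paper closes this gap by an explicit computation: it classifies \emph{all} pairs $(S,C)$ with $C$ a weight~$4$ augmentation of a codimension-$1$ subcode $S$, and verifies that every non-realizable code does lie below a code in Table~\ref{Tab:24-nrc} in the resulting poset. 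Your appeal to ``the exhaustive computation recorded in Table~\ref{Tab:24}'' does not by itself establish this, since Table~\ref{Tab:24} records only counts, not the augmentation relations. Once this computational step is stated explicitly, your outline becomes essentially the paper's proof, up to redundancy in proving both $(i)\Rightarrow(ii)$ and $(ii)\Rightarrow(i)$ when a single three-term cycle suffices.
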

\begin{proof}
%% We can enumerate all doubly even codes of length $24$ containing the all-one
%% vector such that the minimum weight of its dual code is at least $4$.
%% As described in Subsection \ref{Subsec:24}, we classified
%% codes of length $24$ satisfying
%% the conditions (\ref{eq:b1})--(\ref{eq:b3}).
%% Then we can classify
%% these doubly even codes into those which satisfy (i) and those which
%% do not. In particular, we can check that all the codes in Table~\ref{Tab:24-rc}
%% satisfy (i), and that none of the codes in Table~\ref{Tab:24-nrc} satisfies
%% (i). Also, we can classify all the pairs $(C,C')$ such that $C$ is a weight
%As described above, we determined all realizable codes
%and non-realizable codes.
%4 We can classify all the pairs $(C,C')$ such that $C$  a weight
%4 $4$ augmentation of $C'$. This allows us to verify the implication
%By Theorem~\ref{thm:z4-24}, the condition (i) is equivalent to $C$ being
%realizable. Thus,
The implications (ii)$\implies$(i)$\implies$(iii) follow from
Lemma~\ref{augmentation}. The implication (iii)$\implies$(ii) can be verified by
classifying all the pairs $(S,C)$ such that $C$ is a weight $4$ augmentation of a
subcode $S$ of $C$ of codimension $1$.
%We can classify all the pairs $(S,C)$
%such that $C$ is a weight $4$ augmentation of
%a subcode $S$ of $C$ of codimension $1$.
%This allows us to verify the implication
%(iii)$\implies$(ii). The implications (ii)$\implies$(i)$\implies$(iii) follow
%from Lemma~\ref{augmentation}.
\end{proof}

\begin{rem}
The implication (i)$\implies$(ii) also follows from Lemma~\ref{-4}.
\end{rem}

%% In the process of proving the above theorem,
%% we have the following.
%%
%% \begin{thm}\label{thm:z4-24-9}
%% Up to equivalence,
%% there is a unique extremal Type~II $\ZZ_4$-code of length $24$
%% whose residue code has dimension $6$.
%% \end{thm}
%%
%% Hence, every extremal Type~II $\ZZ_4$-code of length $24$ whose
%% residue code has dimension $6$ is equivalent to
%% $\EuC^\natural$ given in Section \ref{Sec:MVOA}.
%In the process of proving Theorem~\ref{finalresult},
%we obtain the uniqueness of the code
%$\EuC^\natural$ given in Section \ref{Sec:MVOA}.

%\begin{prop}
%Up to equivalence,
%$\EuC^\natural$ is a unique extremal Type~II $\ZZ_4$-code of
%length $24$
%whose residue code has dimension $6$.
%\end{prop}

%%%%%%%%%%%%%%%%%%%%%%%%%%%%%%%%%%%%%%%%%%%%%%%%
\begin{table}[thbp]
\caption{Realizable codes with minimum weight $8$} \label{Tab:24-rc}
\begin{center}
{\small
%{\footnotesize
%{\scriptsize
\begin{tabular}{|l|l|l|}
%% \noalign{\hrule height0.8pt}
\hline
\multicolumn{1}{|c|}{Codes} &
\multicolumn{1}{c|}{$C$}   & \multicolumn{1}{c|}{$v$} \\
\hline
$C_6$      &  & \\
$C_{7,1 }$ &  & \\
$C_{7,2 }$ &  & \\
$C_{7,3 }$ & $C_{6   }$ & $v_{7  }$           \\
$C_{8,1 }$ & $C_{7,3 }$ & $v_{81 }$           \\
$C_{8,2 }$ & $C_{7,3 }$ & $v_{82 }$           \\
$C_{8,3 }$ & $C_{7,3 }$ & $v_{83 }$           \\
$C_{8,4 }$ & $C_{6   }$ & $v_{841}$, $v_{842}$\\
$C_{9,1 }$ & $C_{8,3 }$ & $v_{91 }$           \\
$C_{9,2 }$ & $C_{8,4 }$ & $v_{92 }$           \\
\hline
%% \noalign{\hrule height0.8pt}
   \end{tabular}
   \begin{tabular}{|l|l|l|}
\hline
%% \noalign{\hrule height0.8pt}
\multicolumn{1}{|c|}{Codes} &
\multicolumn{1}{c|}{$C$}   & \multicolumn{1}{c|}{$v$} \\
\hline
$C_{9,3 }$ & $C_{7,3 }$ & $v_{931}$, $v_{932}$\\
$C_{9,4 }$ & $C_{8,3 }$ & $v_{94 }$           \\
$C_{9,5 }$ & $C_{8,1 }$ & $v_{95 }$           \\
$C_{9,6 }$ & $C_{8,3 }$ & $v_{96 }$           \\
$C_{10,1}$ & $C_{9,4 }$ & $v_{101}$           \\
$C_{10,2}$ & $C_{9,4 }$ & $v_{102}$           \\
$C_{10,3}$ & $C_{9,4 }$ & $v_{103}$           \\
$C_{11  }$ & $C_{10,1}$ & $v_{11 }$           \\
$C_{12  }$ & $C_{11  }$ & $v_{12 }$           \\
\ && \\
%% \noalign{\hrule height0.8pt}
\hline
   \end{tabular}
}
\end{center}
\end{table}
%%%%%%%%%%%%%%%%%%%%%%%%%%%%%%%%%%%%%%%%%%%%%%%%

%%%%%%%%%%%%%%%%%%%%%%%%%%%%%%%%%%%%%%%%%%%%%%%%
\begin{table}[thbp]
\caption{Vectors for realizable codes} \label{Tab:24-rv}
\begin{center}
{\small
%{\footnotesize
%{\scriptsize
\begin{tabular}{|c|c|}
%% \noalign{\hrule height0.8pt}
\hline
\multicolumn{2}{|c|}{Vectors} \\
\hline
$v_{7  }$ & $(1,0,0,0,0,1,1,1,0,0,1,1,0,0,1,1,0,1,1,0,0,1,1,0)$\\
$v_{81 }$ & $(0,1,0,0,0,0,0,1,0,0,0,0,1,0,1,0,0,0,1,1,0,0,1,1)$\\
$v_{82 }$ & $(1,0,0,0,0,1,1,1,0,0,1,0,1,1,0,1,0,0,1,1,0,0,1,1)$\\
$v_{83 }$ & $(1,0,0,0,0,1,1,1,0,0,1,0,1,1,0,1,0,0,0,0,0,0,0,0)$\\
$v_{841}$ & $(0,0,0,0,1,1,0,0,0,0,0,1,1,1,0,1,0,1,0,0,0,1,0,0)$\\
$v_{842}$ & $(0,0,0,0,1,1,1,1,0,0,0,0,1,1,0,0,0,0,1,1,0,0,0,0)$\\
$v_{91 }$ & $(0,1,0,0,0,0,0,1,0,0,0,0,1,1,1,1,0,0,1,0,1,0,0,0)$\\
$v_{92 }$ & $(1,0,0,0,0,1,0,0,0,0,1,0,0,0,1,0,0,1,0,1,1,0,0,1)$\\
$v_{931}$ & $(0,0,0,0,1,0,0,1,0,0,1,0,0,0,0,1,0,0,0,1,1,0,1,1)$\\
$v_{932}$ & $(0,0,0,0,1,1,0,0,0,0,0,0,1,1,1,1,0,1,1,1,1,0,1,1)$\\
$v_{94 }$ & $(0,1,0,0,0,0,1,0,0,0,0,0,0,0,1,1,0,1,1,1,0,0,1,0)$\\
$v_{95 }$ & $(1,0,0,0,0,1,1,1,0,0,1,0,1,1,0,1,0,0,1,1,0,0,1,1)$\\
$v_{96 }$ & $(1,0,0,0,0,1,1,1,0,0,0,0,0,0,0,0,0,0,1,1,0,0,1,1)$\\
$v_{101}$ & $(1,0,0,0,0,1,1,1,0,0,0,0,0,0,0,0,0,0,1,1,0,0,1,1)$\\
$v_{102}$ & $(1,0,0,0,0,0,1,0,0,0,1,0,0,0,0,1,0,0,0,1,0,1,1,1)$\\
$v_{103}$ & $(1,0,0,0,0,0,0,1,0,0,1,0,1,0,0,0,0,1,1,0,0,1,0,1)$\\
$v_{11 }$ & $(1,0,0,0,0,0,1,0,0,0,0,0,1,0,0,1,0,1,0,0,0,1,1,1)$\\
$v_{12 }$ & $(1,0,0,0,0,0,0,1,0,0,0,0,0,1,0,1,0,0,0,1,1,1,0,1)$\\
\hline
%% \noalign{\hrule height0.8pt}
   \end{tabular}
}
\end{center}
\end{table}
%%%%%%%%%%%%%%%%%%%%%%%%%%%%%%%%%%%%%%%%%%%%%%%%

%%%%%%%%%%%%%%%%%%%%%%%%%%%%%%%%%%%%%%%%%%%%%%%%
\begin{table}[thbp]
\caption{Maximal non-realizable codes} \label{Tab:24-nrc}
\begin{center}
{\small
%{\footnotesize
%{\scriptsize
\begin{tabular}{|l|l|l|c|c|}
%% \noalign{\hrule height0.8pt}
\hline
\multicolumn{1}{|c|}{Codes} &
\multicolumn{1}{c|}{$C$}   & \multicolumn{1}{c|}{$v$}
& $m$ & $N$ \\
\hline
%% $D_{7  }$ & $C_{6 }$ & $w_{7  }$ \\
%% $D_{81 }$ & $D_{7 }$ & $w_{81 }$ \\
%% $D_{82 }$ & $D_{7 }$ & $w_{82 }$ \\
$N_{9,1 }$ & $C_{6}$ & $w_7,w_{81},w_{91 }$ & 14 & 159 \\
$N_{9,2 }$ & $C_{6}$ & $w_7,w_{81},w_{92 }$ & 14 & 372 \\
$N_{9,3 }$ & $C_{6}$ & $w_7,w_{81},w_{93 }$ & 14 & 170 \\
$N_{9,4 }$ & $C_{6}$ & $w_7,w_{82},w_{94 }$ & 14 & 388 \\
$N_{9,5 }$ & $C_{6}$ & $w_7,w_{82},w_{95 }$ & 14 & 228 \\
\hline
%% \noalign{\hrule height0.8pt}
\end{tabular}
\begin{tabular}{|l|l|l|c|c|}
\hline
%% \noalign{\hrule height0.8pt}
\multicolumn{1}{|c|}{Codes} &
\multicolumn{1}{c|}{$C$}   & \multicolumn{1}{c|}{$v$}
& $m$ & $N$ \\
\hline
$N_{9,6 }$ & $C_{6}$ & $w_7,w_{82},w_{96 }$     & 14 & 254 \\
$N_{9,7 }$ & $C_{6}$ & $w_7,w_{82},w_{97 }$     & 14 & 287 \\
$N_{9,8 }$ & $C_{6}$ & $w_7,w_{82},w_{98 }$     & 14 & 488 \\
%%% $D_{9  }$ & $D_{81}$ & $w_{9  }$ \\
$N_{10,1}$ & $C_{6}$ & $w_7,w_{81},w_9,w_{101}$ & 23 & 299 \\
$N_{10,2}$ & $C_{6}$ & $w_7,w_{81},w_9,w_{102}$ & 23 & 378 \\
\hline
%% \noalign{\hrule height0.8pt}
   \end{tabular}
}
\end{center}
\end{table}
%%%%%%%%%%%%%%%%%%%%%%%%%%%%%%%%%%%%%%%%%%%%%%%%

%%%%%%%%%%%%%%%%%%%%%%%%%%%%%%%%%%%%%%%%%%%%%%%%
\begin{table}[thbp]
\caption{Vectors for non-realizable codes} \label{Tab:24-nrv}
\begin{center}
{\small
%{\footnotesize
%{\scriptsize
\begin{tabular}{|c|c|}
%% \noalign{\hrule height0.8pt}
\hline
\multicolumn{2}{|c|}{Vectors} \\
\hline
$w_{7  }$& $(0,0,0,0,0,1,1,0,0,0,1,1,0,0,0,0,0,0,0,1,1,0,1,1)$ \\
$w_{81 }$& $(0,0,0,0,0,0,1,1,0,0,0,1,1,1,1,0,0,1,1,1,1,0,1,1)$ \\
$w_{82 }$& $(0,0,0,0,0,0,1,1,0,1,1,1,0,1,0,0,0,1,1,1,0,1,1,1)$ \\
$w_{91 }$& $(0,0,0,0,1,0,0,1,0,0,1,1,0,1,0,1,0,1,0,0,0,1,0,0)$ \\
$w_{92 }$& $(0,0,0,0,1,0,0,1,0,0,0,1,0,0,0,1,0,1,0,1,0,0,1,1)$ \\
$w_{93 }$& $(0,0,0,0,0,0,0,0,0,1,0,1,0,0,0,0,0,0,0,0,0,1,0,1)$ \\
$w_{94 }$& $(0,0,0,0,0,0,0,0,0,1,0,0,0,0,1,0,0,0,0,0,1,0,0,1)$ \\
$w_{95 }$& $(0,0,0,0,0,0,0,0,0,0,0,1,0,1,0,0,0,0,0,1,0,1,0,0)$ \\
$w_{96 }$& $(0,0,0,0,0,0,0,0,0,0,1,1,1,0,0,1,0,1,1,1,0,0,1,0)$ \\
$w_{97 }$& $(0,0,0,0,0,0,0,0,0,1,1,1,1,0,0,0,0,0,1,0,1,1,0,1)$ \\
$w_{98 }$& $(0,0,0,0,0,0,0,0,0,1,0,1,0,0,0,0,0,1,0,1,0,0,0,0)$ \\
$w_{9  }$& $(0,0,0,0,1,0,0,1,0,1,0,0,1,0,1,1,0,0,1,0,0,1,0,0)$ \\
$w_{101}$& $(0,0,0,0,0,0,0,0,0,1,0,1,0,1,1,0,0,1,1,0,0,1,0,1)$ \\
$w_{102}$& $(0,0,0,0,0,0,0,0,0,1,0,0,1,0,1,1,0,0,0,0,0,0,0,0)$ \\
%% \noalign{\hrule height0.8pt}
\hline
   \end{tabular}
}
\end{center}
\end{table}

%%%%%%%%%%%%%%%%%%%%%%%%%%%%%%%%%%%%%%%%%%%%%%%%

%\newpage
%%%%%%%%%%%%%%%%%%%%%%%%%%%%%%%%%%%%%%%%%%

\section{Moonshine vertex operator algebra and its structure codes}
\label{sec:4}

In this section, we study the relationship between the moonshine
vertex operator algebra and
extremal Type~II $\Z_4$-codes.  Our notations for vertex operator algebras (VOA)
and  framed VOAs are standard. We shall refer to~\cite{FLM,DGH98,LY} for details.

%%%%%%%%%%%%%%%
\subsection{Moonshine codes and extremal Type~II $\ZZ_4$-codes}
Recall that  the moonshine VOA $V^\natural$ is constructed
by~\cite{FLM} 
as a $\Z_2$-orbifold of the Leech lattice VOA $V_\Lambda$. Namely,
\begin{equation}\label{Vn}
V^\natural=\tilde{V}_\Lambda=
(V_\Lambda)^\theta \oplus (V_\Lambda^T)^\theta,
%=\tilde{V}_\Lambda(\theta),
\end{equation}
where $\theta$ is an automorphism of $V_\Lambda$ lifted by the
$(-1)$-isometry of the Leech lattice $\Lambda$,
%(see Remark \ref{theta}),
$V_\Lambda^T=V_\Lambda^T(\theta)$ is the unique irreducible
$\theta$-twisted module for $V_\Lambda$  and $(V_\Lambda^T)^\theta$ is the
submodule fixed by $\theta$ (see~\cite{FLM}).
%
%\begin{rem}
It was shown in~\cite{DMZ} that $V^\natural$ is a framed VOA, i.e., $V^\natural$
contains a subVOA $T$, called  a frame, which is isomorphic to the tensor
product of $48$ copies of the simple Virasoro VOA $L(\frac{1}2,0)$.
%\end{rem}

Given a holomorphic framed VOA $V$ and a frame $T$, one
can associate a triply even code  $D$, called the structure code or the
$\frac{1}{16}$-code, of $V$  (cf.~\cite{DGH98,LY}).  Let $C=D^\perp$.  Then  $V$
can be decomposed as $V =\oplus_{\be \in D} V^\be$  such that $V^\be, \be \in
D,$ are irreducible $V^0$-modules and $V^0$ is isomorphic to the code
VOA $M_C$ as constructed in~\cite{M2}. However,  the structure code $D$
depends on the choice of $T$ and there are many possible choices for the frame
$T$ in general. The main purpose of this section is to study the
structure codes of the moonshine VOA $V^\natural$.

\begin{df}
We call a triply even code of length $48$ a
%% \textit{moonshine code}  if it can be realized as the
%% $\frac{1}{16}$-code of $V^\natural$.
\textit{moonshine code}  if it can be realized as a $\frac{1}{16}$-code of
$V^\natural$ with respect to some Virasoro frame.
\end{df}

\begin{lem}[\cite{DGH98,M3}]\label{mcode}
Let $D$ be a moonshine code. Then $D$ satisfies the following conditions:
\begin{align}
&D\text{ is triply even,}\label{eq:c1}\\
&D\ni\allone,\label{eq:c2}\\
&D^\perp\text{ has minimum weight at least $4$}.\label{eq:c3}
\end{align}
Moreover, $\dim D\geq 7$.
%%where $\allone$ denotes the all-one vector.
%% \begin{enumerate}
%% \item $D$  is triply even;
%% \item $D \ni\allone$;
%% \item $C=D^\perp$  has minimum weight at least $4$.
%% \end{enumerate}
\end{lem}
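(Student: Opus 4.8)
The statement to prove is Lemma~\ref{mcode}: a moonshine code $D$ (a triply even code of length $48$ realizable as a $\frac1{16}$-code of $V^\natural$) satisfies \eqref{eq:c1}--\eqref{eq:c3}, and $\dim D\geq 7$. This is the analogue, one level up, of the earlier lemma about residue codes of extremal Type~II $\Z_4$-codes, and the proof strategy mirrors that one: cite the structural results from the framed VOA literature for the three conditions, then invoke a coding-theoretic bound for the dimension estimate.

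The plan is as follows. Condition \eqref{eq:c1} is immediate: by definition a moonshine code is triply even. Condition \eqref{eq:c2}, that $\mathbf 1\in D$, holds because the $\frac1{16}$-code of a holomorphic framed VOA always contains the all-one vector --- this is part of the general structure theory in~\cite{DGH98} (it reflects the fact that $V^\natural$ has no component $V^\be$ of half-integral conformal weight forcing $\mathbf 1$ into $D$, equivalently that the associated binary code $C=D^\perp$ is even). Condition \eqref{eq:c3}, that $D^\perp$ has minimum weight at least $4$: a weight $1$ or weight $2$ codeword of $C=D^\perp$ would force $V^0=M_C$ to contain a subVOA isomorphic to $L(\frac12,\frac12)$ or to a lattice-type piece producing extra weight-$1$ or weight-$\frac12$ elements, contradicting either the holomorphicity or the known structure (in particular the $c=24$, weight-one space and the absence of such small-weight codewords for $V^\natural$); this is exactly the content recorded in~\cite{DGH98,M3}, so I would simply cite those references for \eqref{eq:c3} just as the earlier lemma cites \cite{CS,HSG,H} for its three conditions.

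For the dimension bound $\dim D\geq 7$, the argument is the coding-theoretic dual of the "$[24,k,4]$ exists only if $k\le 18$" argument used earlier. Here $C=D^\perp$ is an even code of length $48$ with minimum weight at least $4$ (by \eqref{eq:c3}) and $\dim C = 48-\dim D$; if $\dim D\le 6$ then $\dim C\ge 42$, so one needs a $[48,42,4]$ even code --- but the maximal dimension of a length-$48$ binary code of minimum distance $4$ is $48-6=42$ only for the shortened/extended Hamming-type codes, and requiring it to be \emph{even} (equivalently, its dual triply even of length $48$) rules out $\dim C=42$; more precisely one checks against the known bounds (e.g.\ \cite{Brouwer-Handbook}) that a $[48,k,4]$ code with the evenness/triple-evenness constraints forces $k\le 41$, hence $\dim D\ge 7$. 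I would phrase this as: by \eqref{eq:c3} the dual $C=D^\perp$ is a $[48,48-\dim D,\ge 4]$ code, and known bounds give $48-\dim D\le 41$.

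The main obstacle is being precise about the dimension bound: the crude Singleton-type estimate only gives $\dim D\ge 1$, so the improvement to $\dim D\ge 7$ must use that $D$ is triply even (not merely that $D^\perp$ has minimum weight $4$). The cleanest route is probably not a generic coding bound but the VOA-theoretic one: a holomorphic framed VOA of central charge $c=24$ whose $\frac1{16}$-code has dimension $d$ decomposes with $2^d$ irreducible $V^0$-modules, and matching this against the dimension of $(V^\natural)_1=196884-1=196883+\dots$ --- more to the point, the weight-one space forces enough independent $\beta$'s --- gives $d\ge 7$; but since the lemma is attributed to \cite{DGH98,M3}, the honest proof is a citation, and I would keep the dimension claim as a short deduction from \eqref{eq:c1}--\eqref{eq:c3} together with the explicit bound from~\cite{Brouwer-Handbook}, exactly parallel to the treatment of the length-$24$ case, flagging that the triple-evenness of $D$ is what makes the bound bite.
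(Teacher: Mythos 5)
Your proposal matches the paper's approach in outline: conditions \eqref{eq:c1}--\eqref{eq:c3} are cited from the framed-VOA literature (\cite{DGH98,M3}), and the dimension bound is deduced from the constraint that $D^\perp$ is a length-$48$ code of minimum weight at least $4$ together with a bound from~\cite{Brouwer-Handbook}. This is exactly what the paper does --- the appendix records ``If there exists a $[48,k,4]$ code, then $k\le 41$'' and concludes minimality of the $[48,7]$ code from that.

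However, your account of why the bound bites contains a factual error that then leads you into an unnecessary detour. You assert that the maximum dimension of a binary $[48,k,4]$ code is $42$ and that evenness (or triple-evenness of $D$) is what improves this to $41$. That is not so: the bound $k\le 41$ holds for \emph{all} binary $[48,k,4]$ codes, with no parity hypothesis. (Concretely, the parity-check columns of a $[48,k,\ge 4]$ code form a cap --- no three collinear --- of size $48$ in $PG(47-k,2)$; the maximum cap size in $PG(m,2)$ is $2^m$, so $48\le 2^{48-k-1}$ forces $48-k\ge 7$, i.e.\ $k\le 41$. The $[48,41,4]$ code is realized by shortening the extended Hamming $[64,57,4]$ code.) This is exactly parallel to the $[24,k,4]\Rightarrow k\le 18$ bound used for the residue-code lemma, which likewise does not use doubly-evenness. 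Your parenthetical ``flagging that the triple-evenness of $D$ is what makes the bound bite'' should therefore be deleted, and the alternative VOA-theoretic counting argument you sketch for $d\ge 7$ is not needed (and is left too vague to stand on its own). The correct, short version is precisely the one you wrote first: $D^\perp$ is a $[48,48-\dim D,\ge 4]$ code, $[48,k,4]$ forces $k\le 41$ by~\cite{Brouwer-Handbook}, hence $\dim D\ge 7$.
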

%\begin{proof}
%For the proofs of the assertions (\ref{eq:c1}) and (\ref{eq:c2}), see \cite{LY} (see
%also Theorem~\ref{thm:LY}). The assertion (\ref{eq:c3}) is obtained, since
%$(V^\natural)_1=0$, where $V^\natural=\bigoplus_{n=0}^\infty (V^\natural)_n$.
%This is because
%\[
%\sum_{i=1}^n h_i=\min\{m\mid U\cap (V^\natural)_m\neq0,\;
%U\cong L(h_1,\dots,h_n)\}.
%\]
%% The assertion (\ref{eq:c3}) is obtained from that
%% $D$ is a structure code of $V^\natural$
%% (see the above argument in this subsection).
%It is known that a $[48,k,4]$ code exists only if $k \le 41$
%\cite{Brouwer-Handbook,DGH98}. This gives the last assertion.
%\end{proof}

Now, let us define two linear maps $d, \ell :\Z_2^n \to \Z_2^{2n}$ such that
\begin{equation*}\label{mapdlr}
\begin{split}
d(a_1,a_2, \dots, a_n)& = (a_1,a_1, a_2,a_2, \dots, a_n,a_n),\\
\ell(a_1,a_2, \dots, a_n)& =(a_1,0, a_2,0, \dots, a_n,0),\\
%r(a_1,a_2, \dots, a_n)& =(0,a_1,0, a_2,\dots,0, a_n),
\end{split}
\end{equation*}
for any $(a_1,a_2, \dots, a_n)\in \Z_2^n$.

\begin{df}\label{Edouble}
 Let $C$ be a code of length $n$. We define
%% $$\EuScript{D} (C) = \la  d(C), (1,0)^n\ra_{\ZZ_2}  $$
$$\EuScript{D} (C) = \la  d(C), \ell(\allone)\ra_{\ZZ_2}  $$
to be the code generated by $d(C)=\{d(x) \mid x \in C\}$
and $\ell(\allone)$. We call the code
$\EuScript{D}(C)$  the \textit{extended doubling  (or simply the doubling)} of
$C$.
\end{df}

%\begin{thm}[\cite{LY}]\label{thm:2.14}
%Suppose that $\EuC$ is a Type II $\ZZ_4$-code of length $n$.
%% Let $\cC$ be a Type II $\ZZ_4$-code of length $n$.
%Then the VOA $\tilde{V}_{A_4(\EuC)}$ is a holomorphic framed VOA. Moreover,
%the
%% The VOA $\tilde{V}_{A_4(\EuC)}$ is a holomorphic framed VOA. Moreover, the
%structure codes with respect to the frame $T= (V_{\sqrt{2}A_1}^+)^{\otimes n}$
%for $\tilde{V}_{A_4(\EuC)}$ are given by
%$(\EuScript{D}(\cC_1)^\perp,\EuScript{D}(\cC_1))$.
%\end{thm}

%Finally, we give some basic facts on extended doublings.

\begin{lem}
\label{lem:DT} If $C$ is a doubly even $[8n,k]$ code, then the extended doubling
$\EuScript{D}(C)$ is a triply even $[16n,k+1]$ code.
%% Moreover, if $C$ contains the all-one vector of length $8n$
%% and $C^\perp$ has minimum weight
%% $\ge 4$, then $\EuScript{D}(C)$
%% contains the all-one vector of length $16n$ and
%% $\EuScript{D}(C)^\perp$ has minimum weight $\ge 4$.
Moreover, if $C^\perp$ is even and has minimum weight $\ge 4$, then
$\EuScript{D}(C)^\perp$ is even and has minimum weight $\ge 4$.
\end{lem}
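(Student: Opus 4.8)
The plan is to prove the two assertions of Lemma~\ref{lem:DT} separately, computing directly with the maps $d$ and $\ell$.

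First I would verify that $\EuScript{D}(C)$ is a triply even $[16n,k+1]$ code. The length is clear since $d$ and $\ell$ map $\ZZ_2^{8n}$ into $\ZZ_2^{16n}$. For the dimension, note that $d$ is injective, so $d(C)$ has dimension $k$; since $C$ is doubly even, every $\wt(d(x))=2\wt(x)\equiv0\pmod4$, and in fact $\wt(d(x))\equiv0\pmod 8$ precisely when $\wt(x)\equiv0\pmod4$, which holds for all $x\in C$. Thus $d(C)$ is already triply even. The vector $\ell(\allone)$ has weight $8n$, which is divisible by $8$ since $n$ is an integer, so $\ell(\allone)$ contributes a triply even vector. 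It remains to check that sums $d(x)+\ell(\allone)$ are triply even: on the support of $d(x)$ (a union of pairs of adjacent coordinates), adding $\ell(\allone)$ flips the first coordinate of each pair, and a short count shows $\wt(d(x)+\ell(\allone))=8n-\wt(x)\equiv 0\pmod 8$ because $\wt(x)\equiv0\pmod8$ would be needed$\,$— here I must be a little careful: in fact $\wt(d(x)+\ell(\allone)) = 8n - 2\wt(x) + \wt(x) = 8n - \wt(x)$, and since $C$ is doubly even this is $\equiv 8n \pmod 4$ only, so to get triple evenness one uses that $\ell(\allone)\notin d(C)^\perp$-type considerations force $\wt(x)$ divisible by $8$ on the relevant coset, or more simply one checks $\EuScript{D}(C)=d(C)\cup(d(C)+\ell(\allone))$ and both cosets consist of vectors of weight divisible by $8$ by the identity $\wt(d(x)+\ell(\allone))=8n-\wt(x)$ together with $8\mid 8n$ and the doubly even hypothesis refined by the fact that $C\ni\allone$ is not assumed here$\,$— so the honest route is: $\wt(d(x))=2\wt(x)$, and since $4\mid\wt(x)$ we get $8\mid\wt(d(x))$; and $\wt(d(x)+\ell(\allone))=8n-\wt(x)+\wt(x)=8n-\wt(x)$ is wrong, the correct count is that on each of the $n$ blocks of two coordinates the contribution is computed case by case, giving $\wt(d(x)+\ell(\allone))=8n-\wt(x)$ again$\,$; I would nail this block-by-block count carefully, as it is the one genuinely fiddly point. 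Finally $\ell(\allone)\notin d(C)$ since $\ell(\allone)$ has a zero in every even coordinate while a nonzero element of $d(C)$ must have $1$'s in matched pairs, so the dimension is exactly $k+1$.

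Next I would prove that $\EuScript{D}(C)^\perp$ is even with minimum weight $\ge 4$, under the hypothesis that $C^\perp$ is even of minimum weight $\ge 4$. Since $\EuScript{D}(C)$ is triply even, hence doubly even, hence self-orthogonal, we have $\EuScript{D}(C)\subseteq\EuScript{D}(C)^\perp$, and $\allone_{16n}\in d(C)$ whenever $\allone\in C$$\,$— but $\allone\in C$ need not hold, so instead I note $\allone_{16n}=d(\allone_{8n})$ and $d(\allone_{8n})\in d(C)$ iff $\allone_{8n}\in C$; regardless, $\EuScript{D}(C)^\perp$ is even iff $\allone_{16n}\in\EuScript{D}(C)^{\perp\perp}=\EuScript{D}(C)$, equivalently iff $\allone_{16n}\perp\EuScript{D}(C)$, and $\langle\allone_{16n},d(x)\rangle=2\wt(x)=0$ and $\langle\allone_{16n},\ell(\allone)\rangle=8n=0$ in $\ZZ_2$, so $\allone_{16n}\in\EuScript{D}(C)^\perp$, giving evenness of $\EuScript{D}(C)^\perp$. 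For the minimum weight: I would give an explicit description of $\EuScript{D}(C)^\perp$. A computation shows $d(C)^\perp = d(C^\perp) + \ell(\ZZ_2^{8n}) + \langle\text{"difference" vectors}\rangle$; more precisely, writing coordinates of $\ZZ_2^{16n}$ in $n$ blocks of size $2$, the dual of $d(C)$ consists of all $(y_i,z_i)_{i=1}^{8n}$ with $(y_i+z_i)_i\in C^\perp$. Intersecting with $\ell(\allone)^\perp$ imposes $\sum_i y_i=0$. So $\EuScript{D}(C)^\perp=\{(y,z): (y+z)\in C^\perp,\ \textstyle\sum y_i=0\}$. A nonzero vector in this set either has $y+z\ne 0$, in which case its weight is at least $\wt(y+z)\ge 4$ (each coordinate where $y_i\ne z_i$ contributes $1$ to the total weight, and there are $\ge 4$ such), or $y+z=0$, i.e. $y=z$ with $\sum y_i=0$, in which case the weight is $2\wt(y)$ with $\wt(y)\ge 2$ (even, nonzero), again $\ge 4$.

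The main obstacle I anticipate is purely bookkeeping: getting the block-by-block weight identity $\wt(d(x)+\ell(\allone))$ exactly right and correctly identifying $\EuScript{D}(C)^\perp$ in block coordinates. Once the description $\EuScript{D}(C)^\perp=\{(y,z):y+z\in C^\perp,\ \sum y_i=0\}$ is in hand, everything follows from the two hypotheses on $C^\perp$ (evenness gives $\wt(y)$ even in the $y=z$ case; minimum weight $\ge 4$ gives $\wt(y+z)\ge 4$ in the other case) together with the trivial divisibility $8\mid 8n$. No deep input is needed; the earlier lemmas are not even invoked for this statement.
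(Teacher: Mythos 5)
Your plan is the right one, but two of its computations are wrong.

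First, the weight identity $\wt(d(x)+\ell(\allone))=8n-\wt(x)$ is incorrect. In the $i$-th block of two coordinates, $d(x)$ contributes $(x_i,x_i)$ and $\ell(\allone)$ contributes $(1,0)$, so the $i$-th block of $d(x)+\ell(\allone)$ is $(x_i+1,x_i)$, which has weight $1$ whether $x_i=0$ or $x_i=1$. Summing over the $8n$ blocks gives $\wt(d(x)+\ell(\allone))=8n$ for every $x\in C$. This is actually better news than your formula: triple evenness of the coset $\ell(\allone)+d(C)$ is immediate from $8\mid 8n$, and the doubly even hypothesis on $C$ is only needed for $d(C)$ itself (where $\wt(d(x))=2\wt(x)\equiv 0\pmod 8$).

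Second, the evenness argument for $\EuScript{D}(C)^\perp$ has a logical gap. A code $D$ is even iff $\allone\in D^\perp$; thus $\EuScript{D}(C)^\perp$ is even iff $\allone_{16n}\in\EuScript{D}(C)$. You then substitute the different statement $\allone_{16n}\perp\EuScript{D}(C)$, i.e.\ $\allone_{16n}\in\EuScript{D}(C)^\perp$, which is equivalent to $\EuScript{D}(C)$ itself being even, not its dual, so verifying it (as you do) does not yield the conclusion. The clean fix also dissolves your hedge that ``$\allone\in C$ need not hold'': the hypothesis that $C^\perp$ is even is precisely $\allone_{8n}\in(C^\perp)^\perp=C$, so $\allone_{16n}=d(\allone_{8n})\in d(C)\subseteq\EuScript{D}(C)$. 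Alternatively, your block description of $\EuScript{D}(C)^\perp$ already gives evenness: if $(y,z)\in\EuScript{D}(C)^\perp$ then $\wt(y,z)\equiv\sum_i y_i+\sum_i z_i\equiv\wt(y+z)\pmod 2$ and $y+z\in C^\perp$ is even.

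The remaining steps — the length and dimension count, the identification $\EuScript{D}(C)^\perp=\{(y,z):y+z\in C^\perp,\ \sum_i y_i=0\}$, and the minimum weight $\ge 4$ case split — are all correct. With the two corrections above your proof is complete and is essentially the ``straightforward'' verification the paper has in mind.
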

\begin{proof}
Straightforward.
\end{proof}

The following result is  essentially proved in~\cite{DGH98} (see also~\cite{LY}).
%\begin{thm}\label{thm:A4}
%Let $\cC$ be a $\Z_4$-code of length $24$. Then, $\cC$ is extremal Type II if and
%only if $A_4(\cC)$ is isomorphic to the Leech lattice $\Lambda$.
%% Let $\cC$ be an extremal Type II $\Z_4$-code of length $24$. Then
%% $A_4(\cC)$ is isomorphic to the Leech lattice $\Lambda$.
%% \[
%% A_4(\EuC)=\frac{1}2\left\{ (x_1,\dots,x_n)\in \mathbb{Z}^n|\,
%% (x_1,\dots,x_n)\in \EuC \mod 4\right\}
%% \]
%\end{thm}

%Thus, by Theorem~\ref{thm:2.14} and \eqref{Vn}, we have the following
%proposition.
\begin{prop}\label{prop:z4-24-1}
Let $\cC$ be an extremal Type~II $\Z_4$-code of length $24$. Then the doubling
$\EuScript{D}(\cC_1)$ can be realized as a $\frac{1}{16}$-code of the
moonshine VOA $V^\natural$.
%%, where $B=\EuC_1$ is the residue code of $\EuC$.
\end{prop}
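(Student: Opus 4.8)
The plan is to realize the extended doubling $\EuScript{D}(\cC_1)$ as a Virasoro frame structure code of $V^\natural$ by passing through the Leech lattice VOA and using the orbifold description \eqref{Vn}. First I would recall the standard construction of a Virasoro frame inside the lattice VOA $V_{\sqrt2 D}$ associated to a root lattice of type $D_n$: the $n$ mutually orthogonal Ising vectors coming from a $\sqrt2 A_1^n$ sublattice, suitably combined with the $\mathbbm1$-type idempotents, produce a frame whose $\frac1{16}$-code is precisely the doubling $\EuScript{D}$ of the code describing $\sqrt2 D_n\subset\sqrt2\Z^n$. Since an extremal Type~II $\Z_4$-code $\cC$ of length $24$ has $A_4(\cC)\cong\Lambda$ by Lemma~\ref{lem:A4}, and the construction $A_4$ realizes $\Lambda$ as a neighbor of $\sqrt2\Z^{24}$ glued along a $\sqrt2 D_{24}$-type sublattice determined by $\cC_1$, I would identify the relevant frame inside $V_{\sqrt2\Z^{24}}$ and compute that its $\frac1{16}$-code, when restricted to $V_\Lambda$, is $\EuScript{D}(\cC_1)$ (of length $48$). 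This is the content attributed to~\cite{DGH98}; the Leech-lattice half of the frame gives $24$ of the $48$ Ising vectors, and doubling accounts for the fact that each of the $24$ coordinate directions contributes a pair.

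The second half of the argument is the $\Z_2$-orbifold step. The frame chosen above is $\theta$-invariant (it comes from the coordinate structure, which is preserved by the $(-1)$-isometry used to define $\theta$), so it survives in $(V_\Lambda)^\theta$. I would then invoke the general principle that a $\theta$-invariant Virasoro frame of $V_\Lambda$ extends to a Virasoro frame of the orbifold $\tilde V_\Lambda=V^\natural$ of the same rank $48$, because the Ising vectors are fixed by $\theta$ and hence act on both summands $(V_\Lambda)^\theta$ and $(V_\Lambda^T)^\theta$; one checks that the total central charge is still $24$ and that the $48$ Virasoro elements remain mutually commuting and each generates a copy of $L(\tfrac12,0)$ in $V^\natural$. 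What needs care here is the effect of the twisted sector on the $\frac1{16}$-code: the code can only grow (or stay the same) when passing to the orbifold, and I would argue that in fact it is unchanged, because the twisted module $V_\Lambda^T$ contributes codewords already lying in $\EuScript{D}(\cC_1)$ — this uses that $V_\Lambda^T$ has conformal weight $3/2$, forcing the relevant $\frac1{16}$-eigenvalue pattern to be consistent with the all-one-type vector $\ell(\allone)$ that was adjoined in Definition~\ref{Edouble}. Concretely, $\ell(\allone)$ is exactly the codeword recording which of the $48$ Ising vectors act with the "twisted" eigenvalue on $(V_\Lambda^T)^\theta$.

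The main obstacle I expect is the bookkeeping in this last point: verifying that the $\frac1{16}$-code of the orbifold frame is exactly $\EuScript{D}(\cC_1)$ and not some proper overcode, i.e., that no extra codewords appear from the twisted sector beyond the single vector $\ell(\allone)$. This amounts to matching the $\theta$-fixed-point decomposition of $V_\Lambda^T$ under the $48$ commuting Virasoros against the combinatorics of $\EuScript{D}$, and it is here that one genuinely uses extremality of $\cC$ (equivalently, that the lattice is Leech with minimum norm $4$ and no roots), since a shorter vector in $\Lambda$ would produce an Ising-vector configuration incompatible with the triply even doubling structure. Since the statement says this is "essentially proved in~\cite{DGH98}", I would present the argument as an assembly of the lattice-VOA frame construction, the $\theta$-invariance check, and a short orbifold-compatibility lemma, citing~\cite{DGH98,LY} for the detailed verification of the $\frac1{16}$-code computation rather than reproducing it in full.
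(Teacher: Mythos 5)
The paper does not give its own proof of this proposition; it simply attributes it to~\cite{DGH98} (and~\cite{LY}). Your sketch is therefore being measured against the standard argument in those references rather than a proof in the present paper, and in outline you have reconstructed that argument correctly: build the $48$ Ising vectors from the $\sqrt{2}A_1^{24}$ sublattice of $A_4(\cC)$ (two per coordinate, giving the factor $d$), observe that the resulting frame of $V_\Lambda$ has $\frac{1}{16}$-code $d(\cC_1)$, note that the frame is $\theta$-invariant and hence sits inside $(V_\Lambda)^\theta\subset V^\natural$, and then account for the twisted sector $(V_\Lambda^T)^\theta$, which contributes exactly the coset $\ell(\allone)+d(\cC_1)$. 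This is also the mechanism that the paper itself runs in reverse in the proof of Theorem~\ref{thm:z4-24}, where $\delta=(1,1,0,\ldots,0)$ and Lemma~\ref{taud} are used to pass from $V^\natural$ with code $\EuD(B)$ back down to $V_\Lambda$ with code $d(B)$.

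Two points in your write-up are imprecise and worth correcting. First, ``the code can only grow (or stay the same) when passing to the orbifold'' is not a correct general principle: Lemma~\ref{taud} of this very paper gives an orbifold whose $\frac{1}{16}$-code $D^0$ is a proper \emph{sub}code of $D$. What is true is that the $\Z_2$-orbifold by the $\sigma$-type involution $\theta$ (which preserves each $\frac{1}{16}$-eigenspace) enlarges the code by the twisted-sector coset, whereas orbifolding by a $\tau$-type involution $\tau_\delta$ (which permutes $\frac{1}{16}$-eigenspaces) shrinks it; you need this distinction to justify the step. Second, your explanation of where extremality enters is off. Extremality is used at an earlier, cruder stage: it is exactly what forces $A_4(\cC)\cong\Lambda$ (Lemma~\ref{lem:A4}), so that the ambient lattice VOA is the Leech lattice VOA and its $\theta$-orbifold is $V^\natural$ and not some other lattice-VOA orbifold. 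It is not used to rule out ``extra codewords'' from the twisted sector or an ``incompatible Ising-vector configuration''; the triply even property of $\EuD(\cC_1)$ already follows from $\cC_1$ being doubly even, which holds for any Type~II $\Z_4$-code, and the twisted module $V_\Lambda^T$ is irreducible, so it can only contribute a single coset in any case. With these two corrections the sketch is a faithful reconstruction of the cited argument.
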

%%%%%% proof for Prop 3.5 %%%%%%%
%\begin{proof}
%Since $\cC$ is an extremal Type~II $\Z_4$-code, $A_4(\EuC)$ is isomorphic to
%$\Lambda$ by Theorem~\ref{thm:A4}. Hence,
%$\tilde{V}_{A_4(\EuC)}\cong\tilde{V}_\Lambda=V^\natural$ by \eqref{Vn}.
%Recall from  \eqref{Vn} that
%\[
%%% V^\natural= (V_\Lambda)^\theta \oplus (V_\Lambda^T)^\theta
%V^\natural =\tilde{V}_\Lambda(\theta).
%\]
%Since $\cC$ is an extremal Type~II $\Z_4$-code, $A_4(\EuC)$ is isomorphic to
%$\Lambda$ by Theorem \ref{thm:A4}. Hence,
%%by Remark \ref{gandtheta},
%we have
%$V^\natural\cong \tilde{V}_{A_4(\EuC)}(\theta )= \tilde{V}_{A_4(\EuC)}$.
%Now, by Theorem~\ref{thm:2.14}, the structure codes for $V^\natural\cong
%\tilde{V}_{A_4(\EuC)}$ are given by $(\EuScript{D}(\cC_1)^\perp,
%\EuScript{D}(\cC_1))$.
%\end{proof}
%%%%%% end %%%%%%%%%%%%%%%%%%%%

%\begin{proof}
%Straightforward.
%\end{proof}

The converse also holds. First let us recall a  $\Z_2$-orbifold construction for
holomorphic framed VOAs (see Theorem~8 of \cite{LY}).

\begin{lem}[\cite{LY}] \label{taud}
Let $V=\bigoplus_{\beta \in D} V^\beta$ be a framed VOA and $C=D^\perp$. Let
$\delta\in \Z_2^n\setminus C$ be a vector of even weight
and  denote $ D^0=\{\be\in D
\mid \la \be, \delta\ra=0\}$. Then
\[
\tilde{V}(\delta)=
\bigoplus_{\be\in D^0} \Big( V^\be  \oplus  (M_{\delta+C}\times_{M_C} V^\be) \Big)
\]
is also a holomorphic framed VOA and $D^0$ is the $\frac{1}{16}$-code, where $\times_{M_C}$ denotes
the fusion product with respect to the VOA $M_C$.
%The construction of $\tilde{V}(\delta)$ is often referred as a $\Z_2$-orbifold construction.
\end{lem}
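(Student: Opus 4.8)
\textbf{Proof proposal for Lemma~\ref{taud}.}
The plan is to quote the $\Z_2$-orbifold construction for holomorphic framed VOAs essentially verbatim from~\cite{LY}. First I would set $\sigma=\sigma_\delta$ to be the automorphism of $V$ defined on each homogeneous piece $V^\beta$ as $(-1)^{\la\beta,\delta\ra}$; since $D$ is the $\frac{1}{16}$-code of $V$ and $\delta\notin C=D^\perp$, this $\sigma$ is a genuine (non-trivial) order-$2$ automorphism of $V$, and its fixed-point subVOA is $V^\sigma=\bigoplus_{\be\in D^0}V^\be$, where $D^0=\{\be\in D\mid\la\be,\delta\ra=0\}$ has index $2$ in $D$. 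The $\sigma$-twisted module of $V$ decomposes, using the fact that $M_{\delta+C}$ is an irreducible $M_C$-module with integral or half-integral conformal weights according to $\wt(\delta)\bmod 8$, as $\bigoplus_{\be\in D^0}(M_{\delta+C}\times_{M_C}V^\be)$ together with a twisted part; the hypothesis that $\delta$ has even weight guarantees that the relevant module $M_{\delta+C}$ has the correct (integral, after the shift built into $\times_{M_C}$) conformal weights so that the twisted sector contributes integrally graded pieces.

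The key steps, in order, are: (1) verify that $V^\sigma$ is a simple VOA of the same central charge $c=24$ (here $n=48$), using that $M_C$ is simple and each $V^\be$ is an irreducible $M_C$-module; (2) identify the irreducible $V^\sigma$-modules among the $V^\be$ ($\be\in D\setminus D^0$) and the $M_{\delta+C}\times_{M_C}V^\be$ ($\be\in D^0$), which is a fusion-rule computation over the code VOA $M_C$ and where the condition $\wt(\delta)$ even enters to control integrality; (3) invoke the abstract $\Z_2$-orbifold theorem (Theorem~8 of~\cite{LY}, which is the framed-VOA specialisation of the orbifold machinery) to conclude that $\tilde V(\delta)=\bigoplus_{\be\in D^0}\big(V^\be\oplus(M_{\delta+C}\times_{M_C}V^\be)\big)$ carries a holomorphic VOA structure; (4) check that $\tilde V(\delta)$ is again framed with the same Virasoro frame $T\subset M_C\subset V^\sigma$, so that its $\frac{1}{16}$-code is well defined; (5) read off that this $\frac{1}{16}$-code is exactly $D^0$, since the $V^\be$ with $\be\in D^0$ contribute the $\be$-graded pieces and each $M_{\delta+C}\times_{M_C}V^\be$ is an $M_C$-module whose $\frac{1}{16}$-support is again $\be$ (the fusion with $M_{\delta+C}$ shifts only the $M_C$-grading, not the $\frac{1}{16}$-grading, because $\delta+C\subset C^\perp$ acts trivially on the $\frac{1}{16}$-code).

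I expect the main obstacle to be step (2)/(3): namely checking that all hypotheses of the abstract orbifold theorem are met, in particular that the putative twisted sector $\bigoplus_{\be\in D^0}(M_{\delta+C}\times_{M_C}V^\be)$ is indeed the $\sigma$-twisted $V^\sigma$-module with the correct integral conformal weights, and that the extension $\tilde V(\delta)$ is simple and holomorphic. However, since this is precisely the content of~\cite[Theorem~8]{LY}, the honest route is to cite that theorem and merely verify its hypotheses in our situation ($V$ a holomorphic framed VOA, $\delta$ of even weight outside $C$); the verification that $\tilde V(\delta)$ remains framed with $\frac{1}{16}$-code $D^0$ is then a bookkeeping matter about how fusion with $M_{\delta+C}$ interacts with the decomposition $M_C$-modules carry according to their $D$-grading. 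Thus the proof is essentially a citation with a short verification, and I would present it as such rather than reproving the orbifold theorem.
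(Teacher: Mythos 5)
Your proposal agrees with the paper: Lemma~\ref{taud} is stated as a direct citation of Theorem~8 of~\cite{LY}, and the paper gives no proof beyond that attribution, which is exactly the ``honest route'' you settle on. The extra bookkeeping you sketch (simplicity of $V^\sigma$, integrality of the twisted sector via $\wt(\delta)$ even, persistence of the frame $T\subset M_C$) is a reasonable gloss on why the hypotheses of that theorem are met, but it is not something the paper itself records.
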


The following is the main theorem of this section.

\begin{thm}
\label{thm:z4-24} Let $B$ be a doubly even code of length $24$.
%satisfying (\ref{eq:b1})--(\ref{eq:b3}).
Then, the doubling $\EuD(B)$ is a moonshine code if and only if there exists an
extremal Type~II $\ZZ_4$-code $\cC$ of length $24$ with $B={\cC }_1$.
\end{thm}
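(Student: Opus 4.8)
The plan is to prove the theorem by establishing the two implications separately, using the machinery already developed in the excerpt.

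\medskip

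\noindent\textbf{The ``if'' direction.} Suppose $\cC$ is an extremal Type~II $\ZZ_4$-code of length $24$ with $B=\cC_1$. Then Proposition~\ref{prop:z4-24-1} immediately gives that $\EuD(\cC_1)=\EuD(B)$ is realized as a $\frac{1}{16}$-code of $V^\natural$, i.e., $\EuD(B)$ is a moonshine code. So this direction is essentially a citation of what is already stated.

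\medskip

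\noindent\textbf{The ``only if'' direction.} This is the substantive part. Assume $\EuD(B)$ is a moonshine code. First, by Lemma~\ref{lem:DT}, since $\EuD(B)$ must be triply even, $B$ is doubly even; and since moonshine codes satisfy (\ref{eq:c1})--(\ref{eq:c3}) by Lemma~\ref{mcode}, one reads off that $B$ contains $\allone$ and $B^\perp$ is even with minimum weight $\ge 4$ — so $B$ satisfies the hypotheses (\ref{eq:b1})--(\ref{eq:b3}) needed to speak of realizability. By Theorem~\ref{finalresult}, it then suffices to show that $B$ is the residue code of an extremal Type~II $\ZZ_4$-code; equivalently, by the structure of that theorem, one wants to track the moonshine-code property down an augmentation chain to one of the base cases. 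Concretely, the plan is to exploit the $\ZZ_2$-orbifold construction of Lemma~\ref{taud}: a moonshine code $\EuD(B)$ arises as the $\frac{1}{16}$-code of $V^\natural$, and $V^\natural$ itself is an orbifold $\tilde V_\Lambda$; one wants to recognize that the frame underlying $\EuD(B)$ comes from a frame of $V_\Lambda$, hence from a Type~II $\ZZ_4$-code $\cC$ of length $24$ with $A_4(\cC)\cong\Lambda$, i.e., an extremal one by Lemma~\ref{lem:A4}. The key is that the ``doubling'' operation $C\mapsto\EuD(C)$ on codes mirrors exactly the passage from a lattice VOA frame to its $\theta$-orbifold frame: if $\EuD(B)$ is the $\frac{1}{16}$-code of $\tilde V_\Lambda$ for a frame compatible with the orbifold decomposition (\ref{Vn}), then $B$ (recovered as the ``de-doubling'' of the even-weight part) is the $\frac{1}{16}$-code-complement data of a frame of $V_\Lambda$, and frames of $V_\Lambda$ correspond to $4$-frames of $\Lambda$, which correspond to Type~II $\ZZ_4$-codes $\cC$ with $A_4(\cC)\cong\Lambda$.

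\medskip

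\noindent\textbf{Main obstacle.} The hard part is showing that \emph{every} Virasoro frame of $V^\natural$ whose $\frac{1}{16}$-code is a doubling $\EuD(B)$ is $\theta$-stable, i.e., compatible with the orbifold decomposition $V^\natural=(V_\Lambda)^\theta\oplus(V_\Lambda^T)^\theta$, so that it descends to a frame of $V_\Lambda$. A priori a frame realizing $\EuD(B)$ need not be of this ``nice'' form. I expect the argument uses the special shape of $\EuD(B)=\la d(B),\ell(\allone)\ra$ — the fact that its coordinates are naturally paired, and that $\ell(\allone)$ together with $d(\allone)$ generate a natural $\ZZ_2$ on which $\theta$ acts — to force the Ising subalgebras to group into $24$ pairs on which an involution acts, recovering the $(-1)$-isometry of $\Lambda$. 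Alternatively, and perhaps more cleanly, one can argue purely combinatorially via Theorem~\ref{finalresult}(iii): use Lemma~\ref{taud} to show that if $\EuD(B)$ is a moonshine code and $B'$ is obtained from $B$ by removing a weight-$4$ augmentation (so $B$ is a weight-$4$ augmentation of $B'$), then $\EuD(B')$ is also a moonshine code, thereby descending to a base case $B'\in\{C_6,C_{7,1},C_{7,2},\dots\}$ already known (via Proposition~\ref{prop:z4-24-1} and the existence of $\EuC^\natural$ and the appendix codes) to be a residue code; then Theorem~\ref{finalresult} lifts this back up to $B$. Showing that the augmentation-compatibility of the code operation $\EuD$ matches the orbifold step $V\mapsto\tilde V(\delta)$ of Lemma~\ref{taud} — i.e., that $\EuD$ of a weight-$4$ augmentation is a weight-$8$ augmentation realizable by an orbifold — is the technical crux, and is presumably where the companion results on moonshine codes (weight-$8$ augmentation, announced in the introduction as the content of Section~\ref{sec:4}) are invoked.
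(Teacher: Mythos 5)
The ``if'' direction of your proposal is fine and is exactly the paper's proof (cite Proposition~\ref{prop:z4-24-1}).

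For the ``only if'' direction you correctly identify the high-level tool ($\Z_2$-orbifolding via Lemma~\ref{taud} to land back in the Leech lattice VOA), but the obstacle you flag is not the one the paper faces, and the mechanism you sketch is not quite the one that works. You worry about showing that the given frame of $V^\natural$ is $\theta$-stable with respect to the original orbifold decomposition (\ref{Vn}). The paper does not need this at all: it takes the given frame, fixes the explicit vector $\delta=(1,1,0,\dots,0)\in\ZZ_2^{48}$, and forms the orbifold $\tilde V^\natural(\delta)$ using Lemma~\ref{taud}. The point of that particular $\delta$ is that the $\frac{1}{16}$-code of $\tilde V^\natural(\delta)$ is $D^0=\{\beta\in\EuD(B)\mid\langle\beta,\delta\rangle=0\}=d(B)$, and a short computation with $D^\perp=\langle\ell(B^\perp),d(\langle\allone\rangle^\perp)\rangle$ shows that $(D^0)^\perp\supset d(\ZZ_2^{24})$. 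Hence $\tilde V^\natural(\delta)$ contains the code VOA $M_{d(\ZZ_2^{24})}\cong V_{\sqrt{2}A_1}^{\otimes 24}$, which by \cite{Dong} forces $\tilde V^\natural(\delta)\cong V_L$ for some even unimodular $L$; then $L/(\sqrt{2}A_1^{\oplus 24})$ is a Type~II $\ZZ_4$-code $\cC$ with $\cC_1=B$, and the characterization results \cite{LY2,DM04} pin $\tilde V^\natural(\delta)$ down as $V_\Lambda$, so $L\cong\Lambda$ and $\cC$ is extremal by Lemma~\ref{lem:A4}. So there is no need to ``recognize'' the frame as $\theta$-compatible; one simply produces the lattice and reads off the code.

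Your proposed combinatorial fallback via Theorem~\ref{finalresult} and weight-$4$/weight-$8$ augmentation also has a gap. You would want: if $\EuD(B)$ is a moonshine code and $B$ is a weight-$4$ augmentation of $B'$ by a vector $a$, then $\EuD(B')$ is a moonshine code. The available tool, Theorem~\ref{-8}, applied to $\eta=d(a)\in\EuD(B)$ of weight $8$, only says there \emph{exists} a codimension-one moonshine subcode $D'$ with $\langle D',\eta\rangle=\EuD(B)$; it gives no control over which hyperplane $D'$ is, and in particular does not let you force $D'=\EuD(B')$ (nor even $D'$ to be a doubling). So the descent along an augmentation chain inside the class of doublings is not justified by what is proved, and this route would need a substantially stronger version of Theorem~\ref{-8} to close.
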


\begin{proof}
The ``if'' part follows from Proposition~\ref{prop:z4-24-1}. To prove the
converse, suppose that $D=\EuD(B)$ is a moonshine code.

Let $\delta=(1,1,0,0, \dots,0) \in \ZZ_2^{48}$ and $C=D^\perp$. Since the
minimum weight of $C$ is at least $4$ by Lemma~\ref{mcode}, we have
$\delta\notin C$.  By Lemma~\ref{taud}, the  $\frac{1}{16}$-code of the
$\Z_2$-orbifold VOA $\tilde{V}^\natural(\delta)$  is  $D^0 =\{\be\in \EuD(B)
\mid \la \be, \delta\ra=0\}= d(B)$. Since $D^\perp =\la \ell(B^\perp), d(\la
\allone\ra_{\ZZ_2}^\perp)\rangle_{\ZZ_2}$, we have $ (D^0)^\perp \supset \la
d(\la \allone\ra_{\ZZ_2}^\perp), \delta \ra_{\ZZ_2}=d(\Z_2^{24}). $

Thus, $\tilde{V}^\natural(\delta)$ contains a subalgebra $M_{d(\Z_2^{24})}$,
which is isomorphic to the lattice VOA $V_{\sqrt{2}A_1}^{\otimes 24}$
(cf.~\cite{LY}). Hence, $\tilde{V}^\natural(\delta)$ must be isomorphic to a lattice
VOA $V_L$ for some even unimodular lattice $L$ (cf.~\cite{Dong}), and
$L/(\sqrt{2}A_1^{\oplus 24})$ defines a Type~II $\Z_4$-code $\EuC$ with
$\EuC_1= B$. By~\cite[Proposition~4.2]{LY2} (see also~\cite{DM04}),
%By the same
%argument as in \cite{LY2} (see also \cite{DM04}),
$\tilde{V}^\natural( \delta)$ is isomorphic to the Leech lattice VOA
$V_\Lambda$. Therefore, $L$ is isomorphic to $\Lambda$ and $\EuC$ is an
extremal Type~II $\Z_4$-code by Lemma~\ref{lem:A4}.
%Since   $V_{\sqrt{2}A_1}^{\otimes 24}$ decomposes as $M_{d(\Z_2^{24})}$ with
%respect to any Virasoro frames in  $V_{\sqrt{2}A_1}^{\otimes 24}$
%(cf.~\cite{M2}), we may assume without loss that $T$ is given as  in
%\cite{DMZ}.
\end{proof}

Together with Theorem~\ref{finalresult}, we can determine all the moonshine
codes which are extended doublings.

%%%%%%%%%%%%% Section 5 %%%%%%%%%%%%%%%%
\subsection{Weight 8 augmentation and other moonshine codes}
\label{decCode}

In this subsection, we shall give analogues of 
Lemmas~\ref{augmentation} and \ref{-4} for moonshine codes.

Recall  that the full automorphism group $\aut(V^\natural)$ of $V^\natural$ is
the Monster simple group $\M$ and it has two conjugacy classes of involutions
denoted by $2A$ and $2B$ in~\cite{atlas}. Their $\Z_2$-twisted modules
$V^T(2A)$ and $V^T(2B)$ were constructed in~\cite{La00} 
and \cite{Hua}, respectively. Their minimal weights
are also determined.
\begin{lem}\label{2A2B}
\begin{itemize}
\item[(i)]
The minimal weight of the $2A$-twisted module $V^T(2A)$ is $\frac{1}2$, and\\
$\dim (V^T(2A))_{1/2}=1$.
\item[(ii)]
The minimal weight of the $2B$-twisted module $V^T(2B)$ is $1$, and
$\dim(V^T(2B))_{1}=24$.
\end{itemize}
\end{lem}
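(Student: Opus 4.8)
The plan is to read off both statements from the known structure of the twisted modules $V^T(2A)$ and $V^T(2B)$ as constructed in~\cite{La00} and~\cite{Hua}, respectively, together with the general fact that for a holomorphic VOA $V$ of central charge $24$ and an involution $g$, the conformal weights appearing in the $g$-twisted module $V^T(g)$ are governed by the modular data, in particular by the eigenvalue of $g$ on the character via the transformation $S$. First I would recall that for a $2A$ involution the associated McKay--Thompson series is $T_{2A}(\tau)$, whose $S$-transform determines the lowest conformal weight of the twisted sector; the minimal weight of $V^T(2A)$ is $\tfrac12$ and the graded dimension of the lowest piece is $1$, matching the leading coefficient of the relevant $q$-expansion. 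This is precisely the content of part~(i), and it is exactly what is established in~\cite{La00}, where $V^T(2A)$ is built explicitly (using the $c=\tfrac12$ Ising decomposition of $V^\natural$), so I would simply cite that construction and quote $\dim(V^T(2A))_{1/2}=1$.

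For part~(ii), I would argue in the same spirit using Huang's construction of $V^T(2B)$ in~\cite{Hua}: the $2B$ involution is the canonical lift of the $(-1)$-isometry from the Leech lattice picture~\eqref{Vn}, so $V^T(2B)$ is directly related to a $\theta$-twisted sector of a lattice VOA. From the lattice description the lowest conformal weight is $1$, and the weight-$1$ subspace has dimension $24$, reflecting the rank of the Leech lattice (equivalently, the $24$-dimensional space $(V^T_\Lambda)_1$ before taking $\theta$-fixed points, or the appropriate half of it surviving in the orbifold). I would state this as a consequence of the explicit grading computed in~\cite{Hua} (or alternatively extract it from the $S$-transform of $T_{2B}(\tau)$, whose expansion begins $q^{-1}+\cdots$ in the twisted variable, giving minimal weight $1$).

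The main obstacle is bookkeeping rather than conceptual: one must be careful about conventions for ``minimal weight'' of a twisted module versus the conformal weight shift $\rho(g)$, and about which fixed-point subspace survives after the orbifold projection, so that the multiplicities $1$ and $24$ come out correctly and are not off by a factor coming from the $\theta$-projection. Since both twisted modules have already been constructed and their low-weight spaces computed in the cited references, I would not redo these computations; the lemma is essentially a citation, and the only real work is to phrase it consistently with the normalization used elsewhere in this paper. No further argument is needed beyond assembling these two facts.
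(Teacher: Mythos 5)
The paper provides no proof for Lemma~\ref{2A2B}; it is stated immediately after the paragraph citing the constructions of $V^T(2A)$ in~\cite{La00} and $V^T(2B)$ in~\cite{Hua}, so the lemma is itself a citation. Your proposal to read both statements off those explicit constructions (with the McKay--Thompson $S$-transform offered only as a cross-check) matches the paper's approach exactly.
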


The structure of the corresponding $\Z_2$-orbifold VOA is also determined.

\begin{lem}[\cite{Hua,La00,LY,Ya}]\label{2A2BVOA}
Let $g$ be an involution of $\aut(V^\natural)= \M$. 
% Then the $\Z_2$-orbifold VOA
% \[
% \tilde{V^\natural}(g)\cong
% \begin{cases}
% V^\natural & \text{ if $g$ belongs to $2A$},\\
% V_\Lambda  & \text{ if $g$ belongs to $2B$},\\
% \end{cases}
% \]
% where two VOAs $V,V'$, which are isomorphic,
% are denoted by $V \cong V'$.
Then the $\Z_2$-orbifold VOA
$\tilde{V^\natural}(g)$ is isomorphic to
$V^\natural$ if $g$ belongs to $2A$,
$V_\Lambda$ if $g$ belongs to $2B$.
\end{lem}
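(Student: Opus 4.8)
The plan is to settle the two cases separately, in each case by exhibiting $\tilde{V^\natural}(g)$ as a simple, rational, $C_2$-cofinite, holomorphic VOA of central charge $24$ and then identifying it through its weight-one Lie algebra $\tilde{V^\natural}(g)_1$. That $\tilde{V^\natural}(g)$ has these properties follows from the general $\Z_2$-orbifold theory for holomorphic VOAs --- or, in the $2A$ case, from Lemma~\ref{taud} once $g$ is realized as a Miyamoto $\tau$-involution attached to a coordinate Ising vector of a Virasoro frame of $V^\natural$ --- the one hypothesis being that the relevant twisted module has strictly positive lowest weight, which is exactly the content of Lemma~\ref{2A2B}. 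Granting this, the classification of holomorphic VOAs of central charge $24$ applies: such a VOA is determined up to isomorphism by $\tilde{V^\natural}(g)_1$, and in particular it is isomorphic to $V^\natural$ iff $\tilde{V^\natural}(g)_1=0$ (uniqueness of the moonshine VOA, Frenkel--Lepowsky--Meurman) and to $V_\Lambda$ iff $\tilde{V^\natural}(g)_1$ is abelian of dimension $24$ (Dong--Mason). Everything thus reduces to computing $\dim\tilde{V^\natural}(g)_1$.

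For $g$ in class $2B$ I would argue structurally, by reversing the construction~(\ref{Vn}). Let $\sigma$ be the involution of $V^\natural=(V_\Lambda)^\theta\oplus(V_\Lambda^T)^\theta$ acting as $+1$ on the first summand and $-1$ on the second; it is an automorphism because the fusion rules among the $\theta$-stable $V_\Lambda$-modules force $(V_\Lambda)^\theta\cdot(V_\Lambda^T)^\theta\subseteq(V_\Lambda^T)^\theta$ and $(V_\Lambda^T)^\theta\cdot(V_\Lambda^T)^\theta\subseteq(V_\Lambda)^\theta$. Its $\sigma$-twisted module is $V_\Lambda^{-}\oplus(V_\Lambda^T)^{-}$, the sum of the $(-1)$-eigenspaces of $\theta$, whose $\sigma$-fixed summand is $V_\Lambda^{-}$; hence $\tilde{V^\natural}(\sigma)=(V_\Lambda)^\theta\oplus V_\Lambda^{-}=V_\Lambda$. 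It remains to check that $\sigma$ lies in the class $2B$: computing the graded trace of $\sigma$ on $V^\natural$ from the decomposition above and comparing with the Monster character table --- for instance $\sigma$ acts with trace $276$ (not $4372$) on the $196884$-dimensional space $V^\natural_2$ --- identifies its class as $2B$; equivalently, the Conway group $\Aut(\Lambda)$ acting on $V_\Lambda=\tilde{V^\natural}(\sigma)$ reproduces the centralizer $2^{1+24}\!\cdot\!\mathrm{Co}_1$ of a $2B$-element. Since $\M$ is transitive on the class $2B$, this settles that case.

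For $g$ in class $2A$ the identification goes through the graded character. The standard $\Z_2$-orbifold character formula gives
\[
\ch\tilde{V^\natural}(g)=\tfrac12\bigl(\ch V^\natural+T_g\bigr)+\tfrac12\bigl(\ch V^T(2A)+\varepsilon\,\widetilde{T}_g\bigr),
\]
where $T_g$ is the $2A$ McKay--Thompson series (the hauptmodul for $\Gamma_0(2)+$, with vanishing constant term), $\widetilde{T}_g$ is the $g$-twining series on the $2A$-twisted module, the two twisted-sector functions being determined from $T_g$ and $\eta$-quotients by the modular $S$- and $T$-transformations, and $\varepsilon\in\{\pm1\}$ is the sign in the orbifold construction, fixed by the $g$-action on $(V^T(2A))_{1/2}$ via Lemma~\ref{2A2B}(i). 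Carrying this through, the fractional powers of $q$ cancel and one obtains $\ch\tilde{V^\natural}(g)=q^{-1}+196884\,q+\cdots=\ch V^\natural$; in particular $\tilde{V^\natural}(g)_1=0$, so $\tilde{V^\natural}(g)\cong V^\natural$ by the first paragraph. The same calculation re-derives the $2B$ case: there the $q^0$-coefficient of $\ch\tilde{V^\natural}(g)$ comes out $24$, and the resulting $24$-dimensional $\tilde{V^\natural}(g)_1$ is abelian, being the Cartan subalgebra of the rank-$24$ Heisenberg sitting inside $V_\Lambda$.

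I expect the main obstacle to be twofold. First, that $\tilde{V^\natural}(g)$ really is a holomorphic VOA --- associativity of the extended vertex operators, rationality, $C_2$-cofiniteness --- which rests on the (now-established) general orbifold machinery, the positivity it requires being supplied by Lemma~\ref{2A2B}, which is itself the nontrivial computational input from~\cite{La00,Hua}. Second, in the $2A$ computation one must pin down the multiplier systems of $\ch V^T(2A)$ and $\widetilde{T}_g$ and the sign $\varepsilon$ correctly, so that the fractional powers of $q$ cancel and the $q^0$-coefficient comes out exactly $0$; this bookkeeping, together with the appeal to Frenkel--Lepowsky--Meurman uniqueness, is where the genuine work lies, the remainder being formal.
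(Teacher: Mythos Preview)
The paper does not prove this lemma; it is quoted from the cited references \cite{Hua,La00,LY,Ya}. So the comparison is against those sources.

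Your $2B$ argument---reversing the orbifold construction~(\ref{Vn}) via the canonical involution $\sigma$ on $V^\natural=(V_\Lambda)^\theta\oplus(V_\Lambda^T)^\theta$ and then checking the conjugacy class of $\sigma$ by its trace on $V^\natural_2$---is correct and is essentially the standard route (this is the content of \cite{Hua}; the identification via the abelian $24$-dimensional weight-one space and \cite{DM04} is also fine as a second argument).

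Your $2A$ argument, however, has a genuine gap: you reduce to showing $\tilde{V^\natural}(g)_1=0$ and then invoke ``uniqueness of the moonshine VOA, Frenkel--Lepowsky--Meurman.'' That uniqueness is a \emph{conjecture}, not a theorem, and likewise the blanket assertion that a holomorphic $c=24$ VOA is determined by its weight-one Lie algebra was not established at the time (and is still not proved in the generality you state it). So the character computation, even if carried out correctly, does not by itself yield $\tilde{V^\natural}(g)\cong V^\natural$.

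The fix is already implicit in something you wrote but then abandoned. A $2A$-involution is conjugate to a Miyamoto $\tau$-involution attached to an Ising vector in a Virasoro frame of $V^\natural$; hence, via Lemma~\ref{taud} (or more precisely the framed-VOA orbifold machinery of \cite{LY,Ya}), the orbifold $\tilde{V^\natural}(g)$ is again a holomorphic \emph{framed} VOA of central charge $24$. Now one does not need the full FLM conjecture: \cite{LY2} proves that $V^\natural$ is the unique holomorphic framed VOA of central charge $24$ with trivial weight-one space, and this is exactly what your character computation feeds into. Alternatively, \cite{Ya} constructs the isomorphism directly within the framed setting without passing through a characterization theorem at all. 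Either way, the essential point you are missing is that the framed structure survives the $2A$-orbifold, and it is this extra structure---not a conjectural general classification---that pins down the result.
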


The next theorem is an analogue of Lemma~\ref{augmentation}
for moonshine codes.

\begin{thm}\label{+8}
Suppose that
$D$ is a moonshine code. Let $\xi\in \Z_2^{48}\setminus D$ be such that
$D'=\la D,\xi \ra_{\ZZ_2}$ is triply even. If $\xi+D$ has minimum weight $8$, then
$D'$ is also a moonshine code.
\end{thm}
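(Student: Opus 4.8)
The plan is to mimic the $\ZZ_4$-code argument of Lemma~\ref{augmentation}, but in the VOA setting, using the $\ZZ_2$-orbifold construction of Lemma~\ref{taud} together with the fixed-point structure of $2A$-involutions recorded in Lemmas~\ref{2A2B} and~\ref{2A2BVOA}. Since $D$ is a moonshine code, fix a Virasoro frame $T\subset V^\natural$ realizing $D$ as the $\frac{1}{16}$-code, and write $C=D^\perp$, so that $V^\natural=\bigoplus_{\be\in D}V^\be$ with $V^0\cong M_C$. The idea is to produce a suitable even-weight vector $\delta\in\ZZ_2^{48}\setminus C$ so that the orbifold VOA $\tilde V^\natural(\delta)$ from Lemma~\ref{taud} has $\frac{1}{16}$-code $D^0=\{\be\in D\mid\la\be,\delta\ra=0\}$ precisely equal to $D\cap D'^{\,\perp}$, i.e.\ the codimension-$1$ subcode of $D$ on which $\xi$-augmentation has ``not yet been applied.''

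\textbf{Key steps.} First I would choose $\delta$ so that $D\cap\delta^\perp$ is the unique index-$2$ subcode $D_0$ of $D$ with $D_0\subset D'$ and $D'=\la D_0,\xi\ra_{\ZZ_2}=\la D,\xi\ra_{\ZZ_2}$; concretely $\delta$ should pair to $1$ with exactly the cosets of $D_0$ in $D$, which requires $\delta\in D_0^\perp\setminus D^\perp$, and one checks $\delta$ can be taken of even weight and outside $C=D^\perp$ (using $C^\perp=D\ni\allone$, cf.\ Lemma~\ref{mcode}). Second, Lemma~\ref{taud} gives that $\tilde V^\natural(\delta)$ is a holomorphic framed VOA with $\frac{1}{16}$-code $D^0=D_0$. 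Third, I would identify the involution $g$ of the Monster used implicitly here: the orbifold $\tilde V^\natural(\delta)$ is $\tilde V^\natural(g)$ for the involution $g$ of $V^\natural$ acting as $+1$ on $\bigoplus_{\be\in D_0}V^\be$ and $-1$ on the complementary summands; since the weight-$8$ hypothesis on $\xi+D$ forces the lowest-weight vectors of the $(-1)$-eigenspace of $g$ in the $\theta$-twisted sector to have conformal weight $\tfrac12$ with a one-dimensional space (the weight-$8$ codeword of $D'\setminus D$ contributes exactly a $\frac12$-vector via the Ising-model structure $L(\tfrac12,\tfrac1{16})^{\otimes 8}$), Lemma~\ref{2A2B}(i) identifies $g$ as a $2A$-involution. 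Fourth, Lemma~\ref{2A2BVOA} then gives $\tilde V^\natural(\delta)\cong V^\natural$. Finally, I would reverse the orbifold: the ``dual'' orbifold of $\tilde V^\natural(\delta)$ along the corresponding dual vector recovers a framed VOA whose $\frac{1}{16}$-code is $\la D_0,\xi\ra_{\ZZ_2}=D'$, and since this VOA is $\tilde{(\tilde V^\natural(\delta))}(\delta')\cong$ one of the two orbifolds of $V^\natural$; checking (again via the minimal-weight count of Lemma~\ref{2A2B}, now the weight-$8$ vector gives back a $\frac12$-vector) shows it is the $2A$-orbifold, hence isomorphic to $V^\natural$. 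Therefore $D'$ is realized as a $\frac{1}{16}$-code of $V^\natural$, i.e.\ $D'$ is a moonshine code.

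\textbf{Main obstacle.} The delicate point is the third step: translating the combinatorial hypothesis ``$\xi+D$ has minimum weight $8$'' into the analytic statement that the relevant twisted sector has one-dimensional lowest weight space at conformal weight $\tfrac12$, so that Lemma~\ref{2A2B}(i) applies and the orbifold is a $2A$- rather than a $2B$-orbifold. This requires understanding, for a coset $\xi+D$ of minimum weight $8$, how the $8$ relevant Ising factors $L(\tfrac12,\tfrac1{16})$ combine in the twisted module and why no lighter vector appears: a codeword of weight $<8$ in $\xi+D$ would produce a vector of weight $<\tfrac12$, contradicting unitarity. One must also ensure the isomorphism type does not secretly depend on the choice of frame or of $\delta$; this is handled by the rigidity in Lemma~\ref{2A2BVOA}, which says the orbifold VOA is determined up to isomorphism by the conjugacy class of the involution alone. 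I expect the rest — choosing $\delta$, applying Lemma~\ref{taud}, and running the orbifold back — to be formal, paralleling the lattice computation in the proof of Lemma~\ref{augmentation} line by line.
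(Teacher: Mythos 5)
Your high-level intuition — that the hypothesis ``$\xi+D$ has minimum weight $8$'' produces a weight-$\tfrac12$ vector in a twisted sector, forces the involution into class $2A$ by Lemma~\ref{2A2B}(i), and then Lemma~\ref{2A2BVOA} returns $V^\natural$ — is exactly the right leverage point and is the core of the paper's proof. But your mechanism for reaching that twisted sector is broken in step~1, and the error is not cosmetic.

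You ask for a codimension-$1$ subcode $D_0\subsetneq D$ with $\langle D_0,\xi\rangle_{\ZZ_2}=\langle D,\xi\rangle_{\ZZ_2}=D'$. This is impossible by a dimension count: $\dim\langle D_0,\xi\rangle\le\dim D_0+1=\dim D<\dim D'$. Relatedly, $D\cap D'^{\perp}$ is not a proper subcode of $D$ at all, since $D\subset D'\subset D'^{\perp}$ (the latter because $D'$ is triply even, hence self-orthogonal), so $D\cap D'^{\perp}=D$. The deeper issue is directional: Lemma~\ref{taud}, applied to $V^\natural$ with its frame of structure code $D$, always produces a framed VOA whose $\frac{1}{16}$-code is a \emph{subcode} $D^0\subset D$. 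It cannot enlarge the code. Your ``reverse the orbifold'' step would then only recover $V^\natural$ with code $D$ again, not reach $D'\supsetneq D$; the route through $\delta$-orbifolding and back never leaves the lattice of subcodes of $D$ joined with codes that restrict back to $D$.

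The paper instead goes up directly. Set $C^0=\{\alpha\in C\mid\langle\alpha,\xi\rangle=0\}=D'^{\perp}$ and regard $V^\natural=\bigoplus_{\beta\in D}V^\beta$ as a module over the \emph{smaller} code VOA $M_{C^0}$. One then constructs an irreducible $M_{C^0}$-module $U$ containing $\bigotimes_{i}L(\tfrac12,h_i^\xi)$ as a $T$-submodule (minimal weight $\tfrac12$ since $\wt(\xi)=8$), invokes~\cite[Theorem~1]{LY} to get an order-$2$ automorphism $g$ preserving every $V^\beta$, splits each $V^\beta$ into $g$-eigenspaces, forms the $g$-twisted module $V^T(g)=\bigoplus_\beta V^\beta\times_{M_{C^0}}U$, and applies~\cite[Theorem~8]{LY} to obtain a holomorphic framed VOA $\tilde V^\natural(g)$ whose $\frac1{16}$-code is $D'$. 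Only then does one run your $2A$-identification. The essential move you are missing is passing from $M_C$ to $M_{C^0}$ before orbifolding; Lemma~\ref{taud} as stated, over $M_C$, is the right tool for the ``$-8$'' direction (Theorem~\ref{-8}), not for ``$+8$.''
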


\begin{proof}
%% Let $C=D^\perp$, so that $(C,D)$ is the structure codes of
%% $V^\natural$ with respect to a Virasoro frame $T$. Let
Let $D$ be the $\frac{1}{16}$-code of $V^\natural$ with respect to a Virasoro
frame $T$ and $C=D^\perp$. Let $V^\natural= \bigoplus_{\be\in D}V^\be$ and
%Let $V^\natural= \bigoplus_{\be\in D}V^\be$. Denote  $C=D^\perp$ and
$C^0=\{\al\in C\mid \la \al, \xi\ra =0\}$.
Suppose that $\xi+D$ has minimum weight $8$. Without loss
of generality, we may assume that
$\xi=(\xi_1, \dots, \xi_{48})$ has weight $8$.
%We shall construct a $\Z_2$-twisted module using $\xi$ and the
%construction in  \eqref{2-twisted} such that its minimal weight is as small as
%possible.

Set
\[
h^\xi_i=
\begin{cases}
0 &\text{ if } \xi_i=0,\\
\frac{1}{16}  &\text{ if } \xi_i=1,
\end{cases}
\]
and let $U$ be an irreducible $M_{C^0}$-module which contains
$\bigotimes_{i=1}^{48} L(\frac{1}2, h^\xi_i)$ as a $T$-submodule. Then the
minimal weight of $U$ is $\frac{1}{16} \wt(\xi)=\frac{1}2$.
By~\cite[Theorem~1]{LY}, there exists an automorphism $g\in \aut(V)$ of order
$2$ such that $g(V^\be)=V^\be$ for all $\be \in D$. Let $V^{\be, \pm}$ be the $\pm 1$-eigenspaces of $g$ on $V^\be$. Then $V^\beta \times_{M_{C^0}} U = (V^{\beta,+} \times_{M_{C^0}} U) \oplus (V^{\beta,-} \times_{M_{C^0}} U)$ is a sum of two irreducible $M_{C^0}$-modules. One has 
weights in $\Z$ and the other has weights in $\frac{1}2+\Z$ (cf.~\cite{La00,LY}). Moreover, the $M_{C^0}$-module
\begin{equation}\label{2-twisted}
V^T(g)=\bigoplus_{\beta\in D} V^\beta \times_{M_{C^0}} U
\end{equation}
forms an irreducible  $g$-twisted module of $V$.  For each $\be \in D$, let
$U^\be= V^{\be, +}$ and let $U^{\xi+\be}$
be the integral part of $V^\beta \times_{M_{C^0}} U$. Then by  Theorem~8 
of~\cite{LY},
\[
\tilde{V}^\natural(g)  =\bigoplus_{\beta\in D}  \big( U^\beta \oplus U^{\xi+\be}\big)
\]
is a holomorphic framed VOA whose $\frac{1}{16}$-code is $D'=\la D,
\xi\ra_{\Z_2}$.

Since $U$ is isomorphic to 
$M_{C^0}\times_{M_{C^0}}U \subset V^0\times_{M_{C^0}} U$,
the minimal weight of $(V^\natural)^T(g)$ is $\leq \frac {1}2$. Thus, the minimal
weight of $(V^\natural)^T(g)$ is $\frac {1}2$ and $(V^\natural)^T(g)$ is a
$2A$-twisted module by Lemma~\ref{2A2B}. Therefore, the $\Z_2$-orbifold
VOA $\tilde{V}^\natural (g) $ is isomorphic to $V^\natural$ by
Lemma~\ref{2A2BVOA} and we have the desired conclusion.
\end{proof}

By Lemma~\ref{lem:DT}, doublings $\EuD(e_8)$, $\EuD(d_{16}^+)$ and
$\EuD(e_8\oplus e_8)$ are triply even.
%, where $e_8\oplus e_8$ is
%the unique decomposable doubly even self-dual code of length $16$.
Thus, $\EuD(e_8)\oplus \EuD(e_8) \oplus \EuD(e_8)$, $\EuD(e_8\oplus e_8)
\oplus \EuD(e_8)$ and $\EuD(d_{16}^+)\oplus \EuD(e_8)$ are also triply even
codes of length $48$.  By Theorem~\ref{+8}, we also have the following result.

%% Cor. 5.4
\begin{prop}
The triply even codes $\EuD(e_8)\oplus \EuD(e_8) \oplus \EuD(e_8)$,
$\EuD(e_8\oplus e_8) \oplus \EuD(e_8)$ and $\EuD(d_{16}^+)\oplus \EuD(e_8)$
are moonshine codes.
\end{prop}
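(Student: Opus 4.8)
The plan is to realize each of the three target codes as an iterated weight $8$ augmentation of a moonshine code of minimum weight $16$, and then invoke Theorem~\ref{+8} repeatedly. First I would identify a starting moonshine code: by Proposition~\ref{prop:z4-24-1}, for any extremal Type~II $\Z_4$-code $\cC$ of length $24$ the doubling $\EuD(\cC_1)$ is a moonshine code. Taking $\cC=\EuC^\natural$ (or noting that $g_{24}$ is realizable by~\cite{CS97}), and more to the point using the fact that $\EuD$ of a doubly even self-dual code of length $24$ built from $e_8^3$ or $d_{16}e_8$ is realizable since the residue codes of extremal Type~II $\Z_4$-codes include those self-dual codes, we obtain that $\EuD(e_8\oplus e_8\oplus e_8)$, $\EuD(e_8\oplus e_8\oplus d_{16}^+)$ — more precisely the doublings of the doubly even self-dual codes whose weight-$4$ subcode is $e_8^3$ resp.\ $d_{16}e_8$ — are moonshine codes. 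Since the doubling functor is additive on direct sums (the doubling of a direct sum decomposes as a sum of doublings modulo the single extra generator $\ell(\allone)$), these in turn contain the codes $\EuD(e_8)\oplus\EuD(e_8)\oplus\EuD(e_8)$, etc., as subcodes of small codimension.

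The key observation is the combinatorial one: given a moonshine code $D$ that is a subcode of a moonshine code $E$, and a filtration $D=D_0\subsetneq D_1\subsetneq\cdots\subsetneq D_r=E$ in which each $D_{i+1}=\la D_i,\xi_i\ra_{\ZZ_2}$ is triply even and $\xi_i+D_i$ has minimum weight $8$, Theorem~\ref{+8} applied $r$ times shows that if $D_0$ is a moonshine code then so is $E$. So I would run this in the direction from a known moonshine code of minimum weight $16$ down, or equivalently, I would note that $\EuD(e_8)\oplus\EuD(e_8)\oplus\EuD(e_8)$ (and the other two) is obtained from such a code by successive weight $8$ augmentation — and conversely, contains a minimum-weight-$16$ moonshine code. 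Concretely, $\EuD(e_8)$ has minimum weight $8$ (the doubling of the $[8,4,4]$ code $e_8$ has the word $d(x)$ of weight $8$ for $x$ of weight $4$), so the three summands contribute weight-$8$ words, and these are exactly the augmenting vectors. I would start from the sub-moonshine-code of minimum weight $16$ sitting inside $\EuD(e_8)^{\oplus 3}$ (it exists because the full code $\EuD(e_8^3)$-type doubling is realizable, hence a moonshine code, and then Lemma~\ref{-4}'s analogue lets one peel off weight-$8$ generators while staying a moonshine code), and augment back up by the weight-$8$ vectors $d(x_i)$, $x_i$ of weight $4$ in the $i$-th copy of $e_8$, together with the vectors linking the three $\ell(\allone)$ components.

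The main obstacle will be organizing the bookkeeping so that at every stage the intermediate code is genuinely triply even and the coset $\xi_i+D_i$ genuinely has minimum weight $8$ (not $16$) — this is where one must be careful, since Theorem~\ref{+8} requires exactly weight $8$. A clean way around this is to prove directly that each of $\EuD(e_8)\oplus\EuD(e_8)\oplus\EuD(e_8)$, $\EuD(e_8\oplus e_8)\oplus\EuD(e_8)$, $\EuD(d_{16}^+)\oplus\EuD(e_8)$ equals the doubling $\EuD(B)$ for $B=e_8^3$, $(e_8\oplus e_8)\oplus$-type, $d_{16}e_8$-type doubly even self-dual $[24,12,\ge 4]$ code — using that $\EuD(B_1\oplus B_2)$ and $\EuD(B_1)\oplus\EuD(B_2)$ agree up to the single codimension accounted for by $\ell(\allone)$ versus two copies of it, which is a weight-$8$ augmentation — and then apply Theorem~\ref{thm:z4-24} together with the fact (from~\cite{CS97} and our classification, Theorem~\ref{finalresult}) that these self-dual codes are residue codes of extremal Type~II $\Z_4$-codes of length $24$. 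That route replaces the iterated-augmentation argument by one application of Theorem~\ref{thm:z4-24} per case, after a short lemma comparing $\EuD$ on direct sums; the augmentation statement of Theorem~\ref{+8} then only needs to be used to bridge the codimension-one gaps between $\EuD(B_1)\oplus\EuD(B_2)$ and $\EuD(B_1\oplus B_2)$, where the bridging vector visibly has weight $8$ (it is $\ell(\allone)$ restricted to one summand, of weight $8$ on an $e_8$-block — wait, length $16$, so weight $8$). I expect verifying this weight count and the triple-evenness of the bridging step to be the only genuinely delicate point; everything else is formal.
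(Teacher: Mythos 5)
Your closing ``clean way around'' is exactly the paper's argument: each of the three codes is an iterated weight-$8$ augmentation of the doubling $\EuD(B)$ of a decomposable doubly even self-dual code $B$ of length $24$ (namely $e_8\oplus e_8\oplus e_8$ or $d_{16}^+\oplus e_8$), the code $\EuD(B)$ is a moonshine code by Theorem~\ref{thm:z4-24} together with the realizability of $B$ recorded in Table~\ref{Tab:24}, and Theorem~\ref{+8} bridges the gap via the weight-$8$ vector $((1,0)^8,(0,0)^{16})$. The opening detour (peeling down to a minimum-weight-$16$ subcode with Theorem~\ref{-8} and re-augmenting) is unnecessary, and the ``delicate point'' you flag is in fact automatic: the augmented code is triply even because the relevant summand of $B$ is doubly even, and once it is triply even the bridging coset has minimum weight exactly $8$ since it contains the weight-$8$ vector $\xi$ and $\xi\notin D$.
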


\begin{proof}
We note that
\begin{align*}
\EuD(e_8\oplus e_8)\oplus \EuD(e_8)
&=\la \EuD(e_8 \oplus e_8 \oplus e_8), ((0,0)^{16}, (1,0)^8)\ra_{\ZZ_2},
\\
\EuD(e_8)\oplus \EuD(e_8) \oplus \EuD(e_8)
&= \la \EuD(e_8 \oplus e_8)\oplus \EuD(e_8),
((1,0)^8, (0,0)^{16}) \ra_{\ZZ_2},
%\EuD(e_8)\oplus \EuD(e_8) \oplus \EuD(e_8)
%&= \la \EuD(e_8 \oplus e_8\oplus e_8),
%((1,0)^8, (0,0)^{16}), ((0,0)^{16}, (1,0)^8)\ra_{\ZZ_2},
\\
 \EuD(d_{16}^+)\oplus \EuD(e_8)
&=\la \EuD(d_{16}^+ \oplus e_8), ((0,0)^{16}, (1,0)^8)\ra_{\ZZ_2}.
\end{align*}
%% By Corollary~\ref{SDcode},
The doublings $\EuD(e_8 \oplus e_8\oplus e_8)$ and
$\EuD(d_{16}^+ \oplus e_8)$ of the two decomposable doubly even self-dual
codes of length $24$ are moonshine codes (see Table~\ref{Tab:24}). 
Hence, we have the desired result by
Theorem~\ref{+8}.
\end{proof}

\begin{rem}
The three codes above have dimensions greater than $13$, while the doubling of
any doubly even self-dual code has dimension $13$ by Lemma~\ref{lem:DT}.
Hence, none of the three codes is equivalent to any extended doubling of a
doubly even self-dual code.
\end{rem}

The next theorem is a partial converse of Theorem~\ref{+8}, which can also be
viewed as an analogue of Lemma~\ref{-4} for moonshine codes.

\begin{thm}\label{-8}
Let $D$ be a moonshine code. Suppose $\eta \in D$
and $\wt(\eta)=8$.
Then there exists a moonshine code $D'$ such that $D' \subsetneqq\langle D',
\eta  \rangle=D.$
\end{thm}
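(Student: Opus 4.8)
The plan is to mirror the strategy of Theorem~\ref{+8}, but run the $\mathbb{Z}_2$-orbifold construction with an involution belonging to class $2B$ instead of $2A$, so that the resulting orbifold is the Leech lattice VOA $V_\Lambda$ rather than $V^\natural$. Write $D$ for the $\frac{1}{16}$-code of $V^\natural$ with respect to a Virasoro frame $T$, put $C=D^\perp$, and decompose $V^\natural=\bigoplus_{\be\in D}V^\be$. Since $\eta\in D$ has weight $8$, the idea is to pass to the subcode $D'\subsetneqq D$ of codimension $1$ consisting of those $\be\in D$ orthogonal to some suitably chosen vector $c\in\Z_2^{48}$ with $\langle\eta,c\rangle=1$; exactly as in the proof of Lemma~\ref{-4}, the "subtracting a weight~$4$ coordinate" trick there has its analogue here in choosing $c$ so that $\langle D',\eta\rangle=D$ and so that the orbifold by the corresponding involution $g$ lands in the $2B$ class.

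First I would locate the relevant involution. By~\cite[Theorem~1]{LY} applied to the code $C^0=(D')^\perp$ (or rather to the data of the subframe), one gets an order-$2$ automorphism $g$ of $V^\natural$ fixing each $V^\be$ setwise, and an irreducible $M_{C^0}$-module $U$ whose weights are read off from the $h^\xi_i\in\{0,\tfrac1{16}\}$ pattern determined by a weight-$8$ vector $\xi$ with $\langle D,\xi\rangle$ triply even and $\xi+D$ of minimum weight $8$ — the roles of $D$ and $D'$ being swapped relative to Theorem~\ref{+8}, so that now $D'=\langle D,\xi\rangle^{}$... more precisely I would set things up so that $D=\langle D',\eta\rangle$ with $\eta$ playing the role of the weight-$8$ augmenting vector and $D'$ the smaller moonshine code to be produced. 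The $g$-twisted module is $V^T(g)=\bigoplus_{\be\in D'}V^\be\times_{M_{C^0}}U$ as in~\eqref{2-twisted}, and Theorem~8 of~\cite{LY} gives the $\mathbb{Z}_2$-orbifold $\tilde V^\natural(g)$ as a holomorphic framed VOA with $\frac1{16}$-code $\langle D',\xi\rangle$. The key point is then to compute the minimal weight of the $g$-twisted module $(V^\natural)^T(g)$: I must show it equals $1$ (not $\tfrac12$), so that by Lemma~\ref{2A2B} $g$ is of type $2B$, and hence by Lemma~\ref{2A2BVOA} $\tilde V^\natural(g)\cong V_\Lambda$.

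Once $g$ is pinned down as a $2B$ involution, the conclusion follows quickly: $\tilde V^\natural(g)\cong V_\Lambda$ is a holomorphic framed VOA containing the same subframe, so its $\frac1{16}$-code is a moonshine-type structure code for $V_\Lambda$ — but the statement we want is about $V^\natural$, so the right way to phrase it is that $D'$ (the smaller code) is the $\frac1{16}$-code of $\tilde{(\tilde V^\natural(g))}(g')$ for the dual involution $g'$, which orbifolds $V_\Lambda$ back to $V^\natural$; equivalently, one runs the construction in the reverse direction, starting from $D$ and arriving at $D'$ with $D'\subsetneqq\langle D',\eta\rangle=D$. The strict inclusion $D'\subsetneqq D$ is built into the construction by the choice of $c$ with $\langle\eta,c\rangle=1$, exactly as the relation $\cC'_1\subsetneqq\langle\cC'_1,a\rangle=\cC_1$ was forced in Lemma~\ref{-4}.

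The main obstacle I expect is the minimal-weight computation: one must show that the integral part $U^{\eta+\be}$ (or the relevant twisted sector) contributes no vector of weight $\tfrac12$, so that the minimal weight of $(V^\natural)^T(g)$ is exactly $1$ and $g$ falls in class $2B$ rather than $2A$. In the proof of Theorem~\ref{+8} the minimal weight came out to $\tfrac1{16}\cdot\wt(\xi)=\tfrac12$ because $\wt(\xi)=8$; here the analogous naive count would again give $\tfrac12$, so the subtlety is that the relevant module $U$ must instead be chosen with a weight-$16$ pattern (giving $\tfrac1{16}\cdot 16=1$) — in other words the correct analogue of "subtracting a coordinate" doubles the apparent support, and verifying that this is forced, and that no lower-weight state sneaks in from the $g$-fixed decomposition, is where the real work lies. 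This is the place I would be most careful, invoking Lemma~\ref{2A2B}(ii) and the structure of $V^T(2B)$ to rule out weight $\tfrac12$.
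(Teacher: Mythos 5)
Your proposal takes a wrong turn at the very start, and the obstacle you correctly identify at the end is in fact fatal to your plan rather than a technicality to be patched. You aim to produce an involution $g$ of class $2B$, so that $\tilde V^\natural(g)\cong V_\Lambda$, and then orbifold back to $V^\natural$; to make $g$ of class $2B$ you would need the twisted module to have minimal weight $1$, and you notice that the natural computation instead gives $\frac{1}{16}\wt(\eta)=\frac12$. That computation is correct and cannot be evaded: the $T$-module $\bigotimes_i L(\tfrac12,h^\eta_i)$ is pinned down by $\eta$, its minimal weight really is $\frac12$, and there is no ``weight-$16$ pattern'' variant that doubles the apparent support. So $g$ is of class $2A$ by Lemma~\ref{2A2B}(i), not $2B$, and your intended route through $V_\Lambda$ collapses.

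The paper's proof uses exactly the $2A$ class, and this makes the argument short: take $U$ to be the irreducible $M_{C}$-module (over the full $C=D^\perp$, not a codimension-one subcode as in Theorem~\ref{+8}) containing $\bigotimes_i L(\tfrac12,h^\eta_i)$ and chosen so that $V^\eta\times_{M_C}U\cong M_{\al+C}$ has integral weights. Since $U$ has minimal weight $\tfrac12$, one gets $\langle\al,\eta\rangle\neq 0$, hence $\al\notin C$; the twisted module $\bigoplus_\be V^\be\times_{M_C}U$ has minimal weight $\tfrac12$ and is therefore $2A$-twisted. By Lemma~\ref{2A2BVOA}, the $\Z_2$-orbifold $\tilde V^\natural(\al)$ is again $V^\natural$, and by Lemma~\ref{taud} its $\tfrac1{16}$-code is $D'=\{\be\in D\mid\langle\al,\be\rangle=0\}$, which satisfies $D'\subsetneqq\langle D',\eta\rangle=D$ because $\langle\al,\eta\rangle\neq0$. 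Your instinct that one must choose something $c$ (here $\al$) with $\langle\eta,c\rangle=1$, in analogy with Lemma~\ref{-4}, was right; the mistake was aiming for $2B$ rather than recognizing that the ``naive'' minimal weight $\tfrac12$ is precisely what is wanted, since a $2A$ orbifold returns $V^\natural$ directly and no detour through $V_\Lambda$ is needed or possible.
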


\begin{proof}
Let $D$ be the $\frac{1}{16}$-code of $V^\natural$ with respect to a frame $T$
and $C=D^\perp$. Then $V^\natural= \bigoplus_{\be\in D}V^\be$ and $V^\be,
\be \in D,$ are irreducible modules for the corresponding code VOA $V^0=M_C$.

Set
\[
h^\eta_i=
\begin{cases}
0 &\text{ if } \eta_i=0,\\
\frac{1}{16}  &\text{ if } \eta_i=1.
\end{cases}
\]
Let $U$ be an irreducible $M_{C}$-module which contains
$\bigotimes_{i=1}^{48} L(\frac{1}2, h^\xi_i)$ as a $T$-submodule such that the
fusion product $V^\eta \times_{M_{C}} U$ has integral weights.  Note that
$V^\eta \times_{M_{C}} U$ is isomorphic to $M_{\al +C}$ for some $\al\in
\Z_2^{48}$. Since the minimal weight of $U$ is $\frac{1}{16}
\wt(\eta)=\frac{1}2$,  we have  $\langle\al, \eta\rangle\neq 0$ and hence $\al
\notin C$. In this case, we have a $\Z_2$-twisted module
\[
(V^\natural)^T =\bigoplus_{\beta\in D} V^\beta \times_{M_{C}} U = \bigoplus_{\beta\in D} V^\beta \times_{M_{C}} M_{\al+C}.
\]
By  Lemma~\ref{2A2B}, $(V^\natural)^T$ is a $2A$-twisted module of
$V^\natural$.

Now set $D'=\{\be \in C\mid \la \al, \be\ra =0\}$. Then $ D' \subsetneqq\langle
D', \eta  \rangle=D.$   By Lemmas~\ref{taud} and \ref{2A2BVOA},
the $\Z_2$-orbifold VOA
\[
\tilde{V}(\al)  = \bigoplus_{\be\in D'}   \left (  V^\be +  V^\be \times_{M_C}  M_{\al+C}\right)
\]
is isomorphic  to the moonshine VOA $V^\natural$.  Therefore, $D'$ is a
moonshine code.
\end{proof}

%% \begin{rem}
%% In Theorem~\ref{+8}, the special case where $D=\EuD(C)$ for a
%% code $C$ satisfying (\ref{eq:b1})--(\ref{eq:b3}) and $\xi=\ell(\eta)$
%% with $\wt(\eta)=4$, follows from Theorems~\ref{thm:A4}, \ref{thm:z4-24}
%% and Lemma~\ref{augmentation}.
%% \end{rem}
%\begin{rem}
%In Theorem~\ref{+8}, the special case where $D=\EuD(C)$ for a
%code $C$ satisfying (\ref{eq:b1})--(\ref{eq:b3}) and $\xi=d(a)$
%for a vector $a\in\ZZ_2^{24}$ of weight $4$,
%follows from Theorem~\ref{thm:z4-24}
%and Lemma~\ref{augmentation}.
%\end{rem}

\begin{cor}\label{pm8}
A triply even code $D$ of length $48$ is a moonshine  code if and only if $D$ can
be obtained by successive weight $8$ augmentations from a moonshine code with
minimum weight $16$.
\end{cor}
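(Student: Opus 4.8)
The plan is to prove Corollary~\ref{pm8} by combining Theorems~\ref{+8} and~\ref{-8} with a descent argument. The ``if'' direction is immediate: Theorem~\ref{+8} says that a single weight $8$ augmentation of a moonshine code is again a moonshine code, so by induction any code obtained by successive weight $8$ augmentations from a moonshine code is a moonshine code; in particular this applies when the starting code has minimum weight $16$. So the content is in the ``only if'' direction.

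For the ``only if'' direction, I would argue by induction on $\dim D$, or equivalently on the number of weight $8$ codewords, starting from a moonshine code $D$. If $D$ has minimum weight $16$ there is nothing to prove (the empty sequence of augmentations suffices). Otherwise $D$ has a codeword $\eta$ of weight $8$ (recall that a moonshine code is triply even and contains $\allone$, and one should check that minimum weight strictly less than $16$ forces a weight $8$ vector — weights are multiples of $8$, and weight $0$ is the zero codeword, so the minimum nonzero weight is either $8$ or $\ge 16$; if it is not $16$ we must have a weight $8$ codeword). Apply Theorem~\ref{-8} to obtain a moonshine code $D'$ with $D'\subsetneqq\langle D',\eta\rangle=D$; thus $\dim D'=\dim D-1$ and $D$ is a weight $8$ augmentation of $D'$ (since $\eta\in D\setminus D'$ has weight $8$ and $D$ is triply even, the augmentation is by a weight $8$ vector). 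By the induction hypothesis $D'$ is obtained by successive weight $8$ augmentations from a moonshine code $D_0$ of minimum weight $16$; appending the augmentation $D'\subset D$ then exhibits $D$ in the same form.

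The main point to be careful about — and the place where a referee would want detail — is verifying that Theorem~\ref{-8} genuinely \emph{decreases} a well-founded quantity so that the induction terminates. Since $D'\subsetneqq D$ we have $\dim D'<\dim D$, and $\dim D$ is a nonnegative integer, so termination is automatic; one only needs the base case to be reachable, i.e.\ that the descent can always be continued until minimum weight $16$ is attained. This follows because at every stage where the current moonshine code does not have minimum weight $16$, it has a weight $8$ codeword (by the triply-even and $\allone$-containing structure and the weight divisibility just discussed), so Theorem~\ref{-8} applies again. Note that we never need to worry about the minimum weight dropping below $8$ or the intermediate codes failing to contain $\allone$: Theorem~\ref{-8} produces a \emph{moonshine} code $D'$, and Lemma~\ref{mcode} guarantees every moonshine code is triply even, contains $\allone$, and has dual of minimum weight $\ge 4$, so the structural hypotheses needed to iterate are preserved throughout.

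The only genuine obstacle, then, is the correct bookkeeping of the relation ``$D$ is a weight $8$ augmentation of $D'$'' as defined in Definition~\ref{df:wt4}: one needs $D'\neq D$ (clear, as $\dim D'<\dim D$) and that $D$ is generated by $D'$ together with a single vector of weight $8$, which is exactly the content of $\langle D',\eta\rangle=D$ with $\wt(\eta)=8$ from Theorem~\ref{-8}. With that observation the two theorems fit together mechanically, and the corollary follows. I would keep the written proof to a few lines, citing Theorems~\ref{+8} and~\ref{-8} for the two directions and noting the induction on dimension for the ``only if'' part.
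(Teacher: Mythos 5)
Your proof takes the same route the paper intends: the corollary is stated as an immediate consequence of Theorems~\ref{+8} and~\ref{-8}, and your upward induction for the ``if'' direction together with downward descent on dimension for the ``only if'' direction are exactly the right mechanics. The ``if'' direction is fine because once $D$ itself is triply even, every intermediate code $\langle D_i,\xi_i\rangle_{\ZZ_2}$ is automatically triply even (a subcode of $D$), and the coset $\xi_i+D_i$ automatically has minimum weight exactly $8$ (its weights are multiples of $8$, none is $0$, and $\xi_i$ has weight $8$), so the hypotheses of Theorem~\ref{+8} are satisfied at each step; you paraphrase this a bit loosely but there is no error.

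There is, however, one genuine hole in the ``only if'' direction. You assert that ``if the minimum weight is not $16$ we must have a weight $8$ codeword,'' but your own reasoning only establishes that the minimum nonzero weight is either $8$ or at least $16$. You have not excluded the possibility that a moonshine code encountered during the descent has minimum weight $24$ or larger, in which case Theorem~\ref{-8} would be inapplicable and the descent would stall before reaching a code of minimum weight exactly $16$. To close this, invoke Lemma~\ref{mcode}: every moonshine code has dimension at least $7$, and the Griesmer bound excludes a binary $[48,k,d]$ code with $k\geq 7$ and $d\geq 24$ (already for $[48,7,24]$ one has $24+12+6+3+2+1+1=49>48$, and increasing $k$ or $d$ only makes it worse). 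Hence every moonshine code has minimum weight $8$ or $16$, the dichotomy underlying your induction is restored, and the descent terminates at a moonshine code of minimum weight exactly $16$, as claimed.
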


\begin{rem}
After this work has been completed, triply even codes of length $48$ were classified by
Betsumiya and Munemasa~\cite{BM}. Their work is, in some sense,
complementary to ours. In particular,  the classification of moonshine
codes can be reduced to checking the realizability of few triply even
codes with minimum weight $16$
which are not doublings based on their work and 
Theorems~\ref{+8} and \ref{-8} in this work.
\end{rem}

%% %%%%%%%%%%%%%%%%%
%% \begin{table}[bht]
%% \caption{Numbers of inequivalent codes of length $24$ satisfying
%% (\ref{eq:b1})--(\ref{eq:b3})} \label{Tab:24S}
%% \begin{center}
%% \begin{tabular}{|c|c|c|}
%% \hline
%% Dimensions & \# codes& \# realizable codes \\
%% %& & residue codes of extremal $\Z_4$codes\\
%% \hline
%% 12 &  9 &  9 \\
%% 11 & 21 &  \\
%% 10 & 49 &  \\
%% 9  & 60 & 46 \\
%% 8  & 32 & 20 \\
%% 7  &  7 &  5 \\
%% 6  &  1 &  1 \\
%% \hline
%% \end{tabular}
%% \end{center}
%% \end{table}
%% %%%%%%%%%%%%%%%%%%%%%%%%%%%%%%%%%%%%%%%%%%%%
%%%%%%%%%%%%% Section 5 %%%%%%%%%%%%%%%%

%%%%%%%%%%%%%%%%%%%%%%%%%%%%%%%%%%%%%%%%
\appendix
\section{Appendix}
\subsection{Extremal Type~II $\ZZ_4$-codes of length 24
whose residue codes are doubly even self-dual codes}
\label{Ap:Z4-24}

Here,
%% In this appendix,
%as described in Remark \ref{rem:24},
we give explicitly extremal Type~II
$\ZZ_4$-codes ${\cC}$ of length $24$
with $C=\cC_1$ for each $C$ of the seven
doubly even self-dual
codes with labels $d_{12}^2$, $d_{10}e_7^2$, $d_8^3$, $d_6^4$, $d_4^6$,
$e_8^3$ and $d_{16}e_8$.
This is done by listing their generator matrices
$\begin{bmatrix}
I_{12} & M \\
\end{bmatrix}$
where $M$ are as follows:

{\small
\[
\begin{bmatrix}
311222000022\\
112302000002\\
310010020022\\
130221220020\\
330202300020\\
121311130200\\
323111323220\\
103111322100\\
101133102210\\
101331322003\\
132202213331\\
213131331333
\end{bmatrix},
\begin{bmatrix}
333220002022\\
132120002220\\
132012002022\\
332023220200\\
002220333202\\
213113033002\\
213113101200\\
233113312000\\
220002220131\\
121311022231\\
321111002301\\
123133202312
\end{bmatrix},
\begin{bmatrix}
131000222202\\
132122020002\\
110232020022\\
220001310020\\
202001321022\\
013310313220\\
231133213000\\
121331120100\\
101131102032\\
301111300221\\
130221300133\\
231330002113
\end{bmatrix},
\begin{bmatrix}
311222200002\\
130102222200\\
020213122002\\
220013010200\\
033301131202\\
213321132320\\
312230333300\\
121311003120\\
121130132010\\
303110112021\\
112203132233\\
031131002233
\end{bmatrix},
\]
\[
\begin{bmatrix}
311002220220\\
200333022220\\
220022311222\\
011031123102\\
121330332122\\
332121211120\\
301123121212\\
031110033232\\
312231130030\\
101231031201\\
310310303221\\
033123130021
\end{bmatrix},
\begin{bmatrix}
213120000200\\
103102202202\\
132100002020\\
131022002202\\
222001310220\\
002012130200\\
220011032000\\
022211320222\\
202002022131\\
000020203233\\
222200023303\\
022202201310
\end{bmatrix}
\text{ and }
\begin{bmatrix}
211302200000\\
103102022022\\
332320022222\\
333020220220\\
022001331331\\
200230113111\\
220013033113\\
220231321111\\
002231110131\\
022013313031\\
220231133321\\
002233113310
\end{bmatrix},
\]
}
respectively.

%%%%
\subsection{Two $[24,7]$ codes $C_{7,1}$ and $C_{7,2}$}
\label{subsec:OMS}
Up to equivalence, there exist two $[24,7]$ codes which are minimal
subject to (\ref{eq:b1})--(\ref{eq:b3}) (see Subsection~\ref{subsec:CC}).
Here, we give generator matrices of the two
$[24,7]$ codes.
Since these two codes along with $C_6$
are used to define
other codes in Tables~\ref{Tab:24-rc} and \ref{Tab:24-nrc},
we define the two codes by fixing the coordinates.

% As described in Subsection~\ref{Subsec:24}, we have classified
% codes of length $24$ satisfying
% the conditions (\ref{eq:b1})--(\ref{eq:b3}).
% In particular, we have verified that
% there exist exactly three codes which are minimal subject to
% the conditions (\ref{eq:b1})--(\ref{eq:b3}).
% %% We give descriptions of these three codes.
% Since these three codes are used to define
% other codes in Tables
% \ref{Tab:24-rc} and \ref{Tab:24-nrc},
% we define the three codes by fixing the coordinates.
%
% The unique $[24,6]$ code $C_6$ satisfying the conditions
% (\ref{eq:b1})--(\ref{eq:b3}) has generator matrix
% \[
% \begin{bmatrix}
% M_8&M_8&M_8\\
% \allone&\allzero&\allzero\\
% \allzero&\allone&\allzero
% \end{bmatrix},
% \]
% where
% $M_8=
% \begin{bmatrix}
% I_4 & J-I_4\\
% \end{bmatrix}$,
% $J$ denotes the all-one $4 \times 4$ matrix,
% and $I_n$ denotes the identity matrix of order $n$.
% Note that $M_8$ is a generator matrix of $e_8$.
% %% the extended Hamming $[8,4,4]$ code.
% %% We have verified that
% %% the doubling $\EuScript{D}(C_{6})$ is equivalent to
% %% the unique $[48,7]$ code $D^\natural$.
% %%
% Up to equivalence, there exist two $[24,7]$ codes which are minimal
% subject to (\ref{eq:b1})--(\ref{eq:b3}).
The first one $C_{7,1}$ has generator matrix
\[
\begin{bmatrix}
M_6&M_6&M_6&M_6\\
\allone&\allone&\allzero&\allzero\\
\allone&\allzero&\allone&\allzero
\end{bmatrix},
\]
where $M_6$ denotes a generator matrix of the parity check $[6,5,2]$
code $E_6$.
%% \begin{verbatim}
%% E:=GeneratorMatrix(EvenWeightCode(6));
%% j:=KMatrixSpace(K,1,6)![1,1,1,1,1,1];
%% z:=KMatrixSpace(K,1,6)!0;
%% M1:=VerticalJoin(<
%%    HorizontalJoin([E,E,E,E]),
%%    HorizontalJoin([j,j,z,z]),
%%    HorizontalJoin([j,z,j,z])>);
%% C71:=LinearCode(M1);
%% \end{verbatim}
In order to construct the second one, we first construct a $6\times 12$
bordered double circulant matrix
\[
M_{12}=\begin{bmatrix}
1&\allzero &0&\allone\\
\allzero^T&I_5&\allone^T&C_5\end{bmatrix}
=
\begin{bmatrix}
100000011111\\
010000101001\\
001000110100\\
000100101010\\
000010100101\\
000001110010
\end{bmatrix},
\]
where $C_5$ denotes the adjacency matrix of a $5$-cycle.
%% and $I_n$ denotes the identity matrix of order $n$.
%% Note that the double
%% circulant code with generator matrix $M$ is not self-dual.
The second one $C_{7,2}$ has generator matrix
\[
\begin{bmatrix}
\allone&\allzero
\\ M_{12}&M_{12}
\end{bmatrix}.
\]

\bigskip
\noindent
{\bf Acknowledgments.}
The research for this paper was partially carried out
while the first and the third authors were visiting
Institute of Mathematics, Academia Sinica, Taiwan.
These authors would like to thank Academia Sinica
for the hospitality during this visit.

%%%%%%%%%%%%%%%%%  References  %%%%%%%%%%%%%%%%%%%%%%%%

\end{document}